%% file: main.tex
\documentclass[11pt]{article}
\usepackage[margin=1in]{geometry}
\usepackage{algorithm}
\usepackage{algpseudocode}
\usepackage{url}

\providecommand{\headers}[2]{}

\let\oldtitle\title
\renewcommand{\title}[2][]{\oldtitle{#2}}

\input{splitted/preamble}

\input{splitted/shared}

\providecommand{\keywordsname}{Keywords}
\newcommand{\Keywords}[1]{\par\noindent\textbf{\keywordsname:} #1}
\newcommand{\AMS}[1]{\par\noindent\textbf{AMS subject classifications:} #1}

\begin{document}
\author{Michał Wichrowski$ ^0$}
\footnotetext{
    % Interdisziplinäres Zentrum für Wissenschaftliches Rechnen (IWR), Ruprecht-Karls-Universität Heidelberg, Germany,
    \texttt{mwichro@mimuw.edu.pl}}
\date{}

\maketitle

\begin{abstract}
    \input{splitted/abstract}
\end{abstract}

\Keywords{cut finite element method, ghost penalty, geometric multigrid, vertex-patch smoother, additive Schwarz method, unfitted finite elements, high-order finite elements}

\AMS{65N55, 65N30, 65N12, 65F10, 65N22, 65N85}

\input{splitted/main_text}

% Appendix A (cut-position statistics, full proof of prop:circle) removed
% from the build 2026-07-15 together with the archived outlier machinery
% (splitted/archive_outliers.tex); prop:circle itself now lives there.
% Spin-off-note candidate. To restore:
% \appendix
% \input{splitted/appendix_circle}

\bibliographystyle{siam}
\bibliography{literature, original, added_literature, my_papers}

\appendix
\input{splitted/appendix}
\end{document}

%% file: splitted/shared.tex
\usepackage{lipsum}
\usepackage{amsfonts}
\usepackage{graphicx}
\usepackage{epstopdf}
\usepackage{amsmath,amssymb,amsthm}
\newtheorem{lemma}{Lemma}
\newtheorem{theorem}{Theorem}
\newtheorem{corollary}{Corollary}
\theoremstyle{definition}
\newtheorem{assumption}{Assumption}
\newtheorem{proposition}{Proposition}
\theoremstyle{remark}
\newtheorem{remark}{Remark}
\theoremstyle{plain}
\ifpdf
    \DeclareGraphicsExtensions{.eps,.pdf,.png,.jpg}
\else
    \DeclareGraphicsExtensions{.eps}
\fi

\headers{Short title }{Michał Wichrowski}

\title{
    A Multigrid Method for CutFEM and its Convergence
}
\author{Michał Wichrowski$^0$}

\usepackage{amsopn}

%% file: splitted/abstract.tex
%!TEX root = ../main.tex
% Multigrid convergence theory and cut-uniform bounds.
We develop a convergence theory for geometric multigrid with vertex-patch smoothers applied to cut finite element
discretizations of the Poisson problem. The framework addresses non-inherited level forms and the mismatch between the
physical and active domains. Using the discrete extension property, we prove two-level convergence bounds uniform in
the mesh size and the cut geometry, and W-cycle bounds under an additional smallness assumption on the two-level rate.
The numerical experiments intentionally use the stronger V-cycle, for which no convergence bound is claimed here.

% Degree dependence and polynomial shedding of the ghost penalty.
The convergence constants degrade with the degree $p$. Lowering the ghost penalty improves iteration counts. An
aligned two-cell model exhibits a semidefiniteness threshold of order $p^{-2}$, whereas the visibility scale of a
degree-$p$ cut mode decreases exponentially. Experiments at the model threshold reduce the iteration counts, but do
not establish an assembled-operator threshold.

%% file: splitted/main_text.tex
%!TEX root = ../main.tex
\section{Introduction}
\label{sec:intro}
\input{splitted/introduction.tex}

%% Publishing scope: this paper proves mesh- and cut-independent convergence but retains poor degree dependence.
%% Present polynomial reduction of the symmetric ghost weight only as an admissible range, not as optimal.
%% Reserve stronger shedding and nonsymmetric formulations for the follow-up paper; distinguish weight shedding from polynomial-in-p bounds.

% Environments and local macros (siamltex provides theorem-like envs).
\newtheorem{conjecture}{Conjecture}
\def\energy#1{|\!|\!| #1 |\!|\!|}
\def\vG{\mathbf{G}}
\def\vu{\mathbf{u}}
\providecommand{\lesssim}{\mathrel{\preceq}} % amssymb not loaded in main.tex

%%%%%%%%%%%%%%%%%%%%%%%%%%%%%%%%%%%%%%%%%%%%%%%%%%%%%%%%%%%%
\section{Problem setting and assumptions}
\label{sec:theory}

% Locate the smoother definition and state the purpose of this section.
The analysis concerns the vertex-patch smoother defined in Section~\ref{sec:smoother}. This section fixes the geometric
setting and the CutFEM stability properties used in its convergence analysis.

% Notation and constants.
Throughout, $c$, $C$ denote constants independent of $\ell$, $h_\ell$, and of the position of $\Gamma$ relative to the
meshes, but possibly depending on the polynomial degree $p$, the shape of $\Gamma$ (curvature bounds), and the Nitsche
parameter $\gamma_D$. Dependence on $p$ and on the global ghost weight $\gamma$ of~\eqref{eq:intro-ghost} will be
tracked where it matters.

For a measurable set $X\subseteq\R^d$ or a piece of a $(d-1)$-manifold (a face $F$, a subset of $\Gamma$), we write
$(u,v)_X$ for the $L^2(X)$ inner product (with respect to the Lebesgue or surface measure, as appropriate),
$\|v\|_{0,X} := (v,v)_X^{1/2}$ for the induced $L^2$ norm, and $|v|_{1,X} := \|\nabla v\|_{0,X}$, $\|v\|_{1,X}^2 :=
    \|v\|_{0,X}^2 + |v|_{1,X}^2$ for the $H^1$-seminorm and norm on $X$; for a collection of faces the norm is understood
facewise, $\|v\|_{0,\mathbb F}^2 = \sum_{F\in\mathbb F}\|v\|_{0,F}^2$. On $\Gamma$, $n$ denotes the unit normal
pointing out of $\Omega$ and $\partial_n v = n\cdot\nabla v$; on an interior face $F$ it is a fixed unit normal of $F$,
$\partial_n^k v$ the $k$-th normal derivative and $\llbracket\cdot\rrbracket$ the jump across $F$. The meshes
$\mesh_\ell$ are Cartesian: every cell $\cell\in\mesh_\ell$ is an axis-parallel cube of edge $h_\ell$, with
$h_{\ell}=h_{\ell-1}/2$, and every face $F$ is axis-parallel, so the face normal $n$ is always a coordinate direction
and $\partial_n^k$ a pure coordinate derivative. Accordingly the local space is the \emph{tensor-product} space
\begin{gather}\label{eq:Qp}
    \Q_p(\cell) := \operatorname{span}\bigl\{\, x_1^{\alpha_1}\cdots x_d^{\alpha_d} \;:\;
    0\le\alpha_i\le p,\ i=1,\dots,d \,\bigr\},
\end{gather}
of dimension $(p+1)^d$: degree at most $p$ \emph{in each variable separately}, not total degree $p$. The tensor-product
structure permits one-dimensional polynomial inequalities to be applied in the normal direction, with the tangential
variables frozen and without enlarging the one-dimensional constant, since a directional
derivative of $q\in\Q_p$ again lies in $\Q_p$ and its restriction to any line parallel to a coordinate axis is a
univariate polynomial of degree at most $p$ (Lemma~\ref{lem:ghostchain}).
Functions in the finite element space $V_\ell$ (the $\Q_p$ space on the active cells $\mesh_{\ell,\Omega}$, cf.\
Section~\ref{sec:intro}) are cellwise polynomials on all of the active domain $\Omega_\ell\supseteq\Omega$, so all these
quantities are defined on $\Omega_\ell$ and not only on $\Omega$; whether a norm is taken over $\Omega$ or over
$\Omega_\ell$ is always indicated in the subscript and matters. The nodal basis of $V_\ell$ is the Lagrange basis on
the \emph{tensor-product Gauss--Lobatto} nodes of each cell. The one-signed Gauss--Lobatto quadrature error and the
resulting nodal $L^2$-stability enter the constants of
Proposition~\ref{prop:quasi} and Theorem~\ref{thm:additive}. Finally, we write $\energy{v}_\ell^2 = A_\ell(v,v)$ for the stabilized energy
norm.

\begin{assumption}[geometric setting]\label{ass:geometry}
    The boundary $\Gamma$ is $C^2$ and has positive reach $\delta_0>0$.
    On every level, if $\Gamma$ intersects a cell $\cell$, then
    $\Gamma\cap\cell$ is a single curve (surface) intersecting
    $\partial\cell$ exactly twice (in a single closed curve for $d=3$), each
    closed face is intersected at most once, and $\cell\setminus\Gamma$ has
    exactly two connected components. Moreover, $\Gamma$ contains no vertex
    of any $\mesh_\ell$, $\Omega_\ell\subseteq\Omega_{\ell-1}$ for all
    $\ell$, and
    \begin{gather}\label{eq:resolution}
        c_R\, h_{\ell-1} \;\le\; \delta_0
        \qquad\text{on every level used in the analysis,}
    \end{gather}
    where $c_R=c_R(d):=24d+10\sqrt d$.
\end{assumption}

% Explain the geometric regularity and cut configuration.
The $C^2$ regularity provides the local smoothness used in the geometric arguments. Positive reach means that the
closest-point projection onto $\Gamma$ is well defined and $C^1$ in the tube $U_{\delta_0}$. For a $C^2$ boundary, this
condition amounts to a curvature bound together with a lower bound on the separation of distinct sheets. The cut-cell
conditions constitute the Hansbo--Hansbo resolution condition. The exclusion of mesh vertices is a generic-position
assumption; if necessary, it can be enforced by an arbitrarily small perturbation of the level set.

% Explain the quantitative resolution condition and the treatment of coarse levels.
The value of $c_R$ provides a factor-two safety margin for the geometric constructions in Lemma~\ref{lem:geom}; its
value is not optimized. Levels that are too coarse to satisfy~\eqref{eq:resolution} are incorporated into the exact
coarsest-level solver. This restriction does not affect the two-grid analysis below, but a multilevel result must
account for such levels explicitly.

% State the CutFEM stability assumptions.
\begin{assumption}[CutFEM stability]\label{ass:cutfem}
    Define on $V_\ell$ the mesh-dependent norm
    \begin{gather}\label{eq:triple-norm}
        N_\ell(v)^2 := \|\nabla v\|^2_{0,\Omega}
        + h_\ell \|\partial_n v\|^2_{0,\Gamma}
        + h_\ell^{-1} \|v\|^2_{0,\Gamma}
        + g_\ell(v,v).
    \end{gather}
    With the ghost penalty $g_\ell$ and a sufficiently large, cut-independent
    Nitsche parameter $\gamma_D$, the form $A_\ell$ is coercive and bounded
    with respect to $N_\ell$,
    \begin{gather}\label{eq:coercive}
        c_A\, N_\ell(v)^2 \le A_\ell(v,v),
        \qquad
        A_\ell(u,v) \le C_A\, N_\ell(u)\, N_\ell(v),
    \end{gather}
    where $c_A$ and $C_A$ are independent of $\ell$ and the cut
    configuration. The ghost penalty also satisfies the discrete-extension
    estimate
    \begin{gather}\label{eq:ghost-extension}
        |v|^2_{1,\Omega_\ell\setminus\Omega}
        \le C \bigl( |v|^2_{1,\Omega} + g_\ell(v,v) \bigr),
        \qquad v\in V_\ell,
    \end{gather}
    with a constant independent of the cut configuration.
\end{assumption}

% Explain the consequences and provenance of the stability assumptions.
Coercivity and boundedness imply $\energy{v}_\ell\simeq N_\ell(v)$, while the discrete-extension estimate yields the
corresponding equivalence with $\|v\|_{1,\Omega_\ell}$-type norms, in both cases with cut-independent constants. These
properties are established for fixed polynomial degree in the ghost-penalty
literature~\cite{burman2010ghost,BurmanHansbo12,MassingLarsonLogg14,BurmanClausHansboLarsonMassing15}. The
estimate~\eqref{eq:ghost-extension} is the penalized counterpart of the stability property of the discrete extension
operator of~\cite{BurmanHansboLarson22Ext}, which realizes the same control by extending from the interior cells into a
modified space rather than by penalizing jumps. Here their dependence on $p$ must be explicit because it enters the
transfer and two-level estimates of Sections~\ref{sec:transfer} and~\ref{sec:twolevel}; in particular, the factor
$(p+1)\,\gamma^{-1}$ reappears in the $p$-scaling of Section~\ref{sec:p-dependence}. Proposition~\ref{prop:cutfem}
therefore tracks the polynomial trace/Markov factors, the Cauchy--Schwarz factor $(p+1)\,\gamma^{-1}$, and the single
exponential factor given by the chain amplification $\Xi_p$ in~\eqref{eq:Xi}; see Remark~\ref{rem:where-exponential}.

% Specify the ghost-penalty weights used in the analysis.
The proof also determines the ghost-weight scaling used throughout the analysis: $g_\ell$ carries the standard weights
$h_F^{2k-1}/(k!)^2$~\cite{burman2010ghost}, scaled by the single global weight $\gamma\in(0,1]$
from~\eqref{eq:intro-ghost}. All constants below indicate their dependence on $\gamma$, which is the design variable in
Section~\ref{sec:cut-adaptive}.\footnote{In~\cite{CuiKanschat25CutFEM}, the weights are printed as $h_F^{2k+1}/(k!)^2$.
    We interpret the exponent as a typo for the standard $h_F^{2k-1}/(k!)^2$, because the printed weights define a weaker,
    $L^2$-type penalty for which~\eqref{eq:ghost-extension} holds only with $h_\ell^{-2}g_\ell$. The same distinction
    appears in the design framework of~\cite{BurmanHansboLarson26Locking}, where the face-penalty weight $h_F^{3-2m}$
    carries $m=1$ for control of the $H^1$-seminorm and $m=0$ for the $L^2$ norm, the two exponents differing by $h_F^2$.}

%%%%%%%%%%%%%%%%%%%%%%%%%%%%%%%%%%%%%%%%%%%%%%%%%%%%%%%%%%%%
\section{Stability of the intergrid transfer}
\label{sec:transfer}
%%%%%%%%%%%%%%%%%%%%%%%%%%%%%%%%%%%%%%%%%%%%%%%%%%%%%%%%%%%%

The multigrid transfer is the identity on functions, $\prol{\ell}u_{\ell-1}(x)=u_{\ell-1}(x)$ on $\Omega_\ell$, which
is well defined by Assumption~\ref{ass:geometry}. The forms are \emph{non-inherited}: $A_\ell$ is not the Galerkin
restriction of $A_{\ell+1}$, because the Nitsche terms are the same but the ghost penalties act on different face sets
with different weights, and the cut cell sets differ per level. The first lemma proves energy-norm stability of the
transfer despite these differences.

\begin{lemma}[prolongation stability]\label{lem:prolongation}
    There is $C_{\mathrm{pr}}$ independent of $\ell$ and of the cut
    configuration such that
    \begin{gather}\label{eq:prolongation}
        \energy{\prol{\ell} v}_{\ell}
        \le C_{\mathrm{pr}}\, \energy{v}_{\ell-1}
        \qquad \forall v\in V_{\ell-1}.
    \end{gather}
\end{lemma}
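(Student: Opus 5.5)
By the equivalence $\energy{\cdot}_\ell\simeq N_\ell(\cdot)$ from \eqref{eq:coercive}, it suffices to prove $N_\ell(\prol{\ell}v)\le C\,N_{\ell-1}(v)$ for $v\in V_{\ell-1}$. Then \eqref{eq:prolongation} follows with $C_{\mathrm{pr}}=C\,(C_A/c_A)^{1/2}$. Since $\prol{\ell}$ is the identity on functions, I will compare the four terms of \eqref{eq:triple-norm} one at a time.

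The gradient term is unchanged: $\|\nabla v\|_{0,\Omega}$ is the same on both levels. The two boundary terms differ only by the mesh-size factors. From $h_\ell=h_{\ell-1}/2$ we get $h_\ell\|\partial_n v\|^2_{0,\Gamma}\le h_{\ell-1}\|\partial_n v\|^2_{0,\Gamma}$ and $h_\ell^{-1}\|v\|^2_{0,\Gamma}=2h_{\ell-1}^{-1}\|v\|^2_{0,\Gamma}$. Both are therefore bounded by $2N_{\ell-1}(v)^2$.

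The ghost term $g_\ell(v,v)$ is the real work. Its faces $F$ belong to $\mesh_\ell$ and touch cut cells of level $\ell$. There are two kinds.
\begin{enumerate}
\item Faces lying in the interior of a coarse cell $K\in\mesh_{\ell-1}$. There $v|_K\in\Q_p(K)$ is smooth, so every jump $\llbracket\partial_n^k v\rrbracket$ vanishes and the face contributes nothing.
\item Faces contained in a coarse face $F'$ of $\mesh_{\ell-1}$. Such an $F'$ is shared by two coarse active cells, because $\Omega_\ell\subseteq\Omega_{\ell-1}$. The jumps on $F$ are restrictions of the jumps on $F'$, and the weights satisfy $h_F^{2k-1}=2^{1-2k}h_{F'}^{2k-1}\le 2h_{F'}^{2k-1}$.
\end{enumerate}
If $F'$ is one of the faces penalized by $g_{\ell-1}$, the contribution is at most $2g_{\ell-1}(v,v)$ restricted to $F'$, with the global weight $\gamma$ common to both levels.

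The main obstacle is a coarse face $F'$ that carries a level-$\ell$ penalized subface but is \emph{not} in the level-$(\ell-1)$ ghost face set. This can happen because the cut cell sets differ between levels, for example when a fine cell is cut but its coarse neighbor pattern places $F'$ between two uncut coarse cells. In that case the term cannot be absorbed directly. I would instead bound it by an inverse/trace estimate, $h_F^{2k-1}\|\llbracket\partial_n^k v\rrbracket\|^2_{0,F}\le C_p\,|v|^2_{1,K_1\cup K_2}$ for the two coarse cells adjacent to $F'$. This uses the one-dimensional Markov and trace inequalities in the normal direction for tensor-product $\Q_p$ (cf.\ Lemma~\ref{lem:ghostchain}), and the $k$-dependent factors are controlled by the $(k!)^{-2}$ weights.

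The resulting $|v|^2_{1,\Omega_{\ell-1}}$ is then split into $|v|^2_{1,\Omega}$ and $|v|^2_{1,\Omega_{\ell-1}\setminus\Omega}$. The second piece is controlled by the discrete-extension estimate \eqref{eq:ghost-extension} on level $\ell-1$, so it is bounded by $C(|v|^2_{1,\Omega}+g_{\ell-1}(v,v))\le C\,N_{\ell-1}(v)^2$. Bounded overlap of the coarse patches gives a $p$-dependent but cut-independent constant. This also yields a slightly cruder but uniform alternative that skips the case distinction altogether: bound every term of $g_\ell$ by $|v|_{1,\Omega_{\ell-1}}^2$ and then apply \eqref{eq:ghost-extension}. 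I would use that route if the face bookkeeping turns out delicate, tracking only the factor $\gamma\le1$, which can only help.
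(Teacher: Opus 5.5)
Your proof is correct. Its main line (reduction to $N_\ell$, the bounds on the three unpenalized terms, and the split of fine ghost faces into those inside a coarse cell and those on a coarse face $F'$) is the same as the paper's.

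The difference lies in what you call the main obstacle. That case cannot occur. If $F\in\mathbb F_G^{\ell}$ lies on a coarse face $F'$, one of its fine neighbours, say $K_1$, is cut. Since $K_1\subset P(K_1)$, its coarse parent $P(K_1)$ is cut as well. $F'$ is a face of $P(K_1)$, and both coarse neighbours are active because each contains an active fine cell. Hence $F'\in\mathbb F_G^{\ell-1}$ automatically. Your example, a cut fine cell next to a coarse face between two uncut coarse cells, is impossible, since that fine cell lies inside one of those coarse cells.

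The paper's proof rests on exactly this observation. It reassembles the fine subfaces of each $F'$ and uses $h_\ell^{2k-1}\le\frac12 h_{\ell-1}^{2k-1}$ for $k\ge1$ to obtain $g_\ell(v,v)\le\frac12\, g_{\ell-1}(v,v)$. The constant is absolute, and no trace, Markov or inverse inequality is needed.

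Your fallback (the one-face bound of Lemma~\ref{lem:ghostchain}(a) on the coarse cells adjacent to $F'$, followed by \eqref{eq:ghost-extension} on level $\ell-1$) is also a valid proof, since the lemma allows $p$-dependence. It is weaker, though:
\begin{itemize}
\item It replaces the factor $\frac12$ by roughly $\gamma\,2d\,\Theta'_p\,(1+C_{\mathrm{ext}}(p))$.
\item The $(k!)^{-2}$ weights do not tame the iterated Markov factors: the resulting sum is $\Theta'_p=(Cp)^{2p}e^{O(p)}$ (Remark~\ref{rem:ghostchain}).
\item It additionally relies on the discrete-extension property of the coarse level.
\end{itemize}
This gives up the $p$-uniform ghost transfer recorded in Remark~\ref{rem:strengthened}, which the later degree-dependence discussion uses. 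The direct weight comparison is the version to keep, and the fallback branch can be dropped.
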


\begin{proof}
    Throughout, write $H:=h_{\ell-1}$, so $h_\ell = H/2$, and
    $w:=\prol\ell v$. Since $\Omega_\ell\subseteq\Omega_{\ell-1}$
    (Assumption~\ref{ass:geometry}), $w$ is the restriction of $v$ to
    $\Omega_\ell$; in particular $w=v$ on $\Omega$ and on $\Gamma$.

    \emph{Step 1 (reduction to the norm $N_\ell$).}
    By~\eqref{eq:coercive} it suffices to prove
    \begin{gather}\label{eq:goal-N}
        N_\ell(w)^2 \le C\, N_{\ell-1}(v)^2,
    \end{gather}
    since then $\energy{w}_\ell^2 = A_\ell(w,w) \le C_A N_\ell(w)^2
        \le C\,C_A N_{\ell-1}(v)^2 \le C\,(C_A/c_A)\, \energy{v}_{\ell-1}^2$.

    \emph{Step 2 (unpenalized terms).}
    Because $w=v$ on $\Omega$ and $\Gamma$, the first three terms
    of~\eqref{eq:triple-norm} for $w$ on level $\ell$ differ from those for
    $v$ on level $\ell-1$ only through the mesh size in the weights:
    \begin{gather*}
        \|\nabla w\|^2_{0,\Omega} = \|\nabla v\|^2_{0,\Omega},
        \quad
        h_\ell \|\partial_n w\|^2_{0,\Gamma}
        = \tfrac12\, H \|\partial_n v\|^2_{0,\Gamma},
        \quad
        h_\ell^{-1}\|w\|^2_{0,\Gamma} = 2\, H^{-1}\|v\|^2_{0,\Gamma}.
    \end{gather*}
    Hence these three terms are bounded by $2$ times their coarse
    counterparts, and it remains to bound $g_\ell(w,w)$.

    \emph{Step 3 (classification of fine ghost faces).}
    Let $F\in\mathbb F_G^\ell$ with adjacent fine cells
    $\cell_1,\cell_2\in\mesh_{\ell,\Omega}$, and let $P(\cell_i)\in
        \mesh_{\ell-1}$ denote the coarse parent cells. Since $\cell_i\cap\Omega
        \ne\emptyset$ and $\cell_i\subset P(\cell_i)$, both parents belong to
    $\mesh_{\ell-1,\Omega}$; note that no property of the cut sets beyond
    this is used. By nestedness of the Cartesian hierarchy, $F$ lies on a
    hyperplane which either passes through the interior of a coarse cell or
    contains a coarse face; consequently exactly one of the following holds:
    \begin{itemize}
        \item[(i)] $P(\cell_1)=P(\cell_2)=:\cell'$ and $F\subset
                  \operatorname{int}\cell'$. Then $v|_{\cell'}\in\Q_p(\cell')$ is a single
              polynomial, all jumps $\llbracket\partial_n^k v\rrbracket_F$, $0\le k\le
                  p$, vanish, and $F$ contributes nothing to $g_\ell(w,w)$.
        \item[(ii)] $P(\cell_1)\ne P(\cell_2)$ and $F\subset F' :=
                  F\bigl(P(\cell_1),P(\cell_2)\bigr)$, a coarse face with both neighbors
              in $\mesh_{\ell-1,\Omega}$. Write $\omega_{F'} := P(\cell_1)\cup
                  P(\cell_2) \subset \Omega_{\ell-1}$.
    \end{itemize}

    \emph{Step 4 (ghost penalty by weight comparison).}
    Only faces of type~(ii) contribute. Fix such an $F\subset F'$. The jump
    $\llbracket\partial_n^k v\rrbracket$ across $F$ is the restriction to $F$
    of the jump across the coarse face $F'$ of the single coarse function $v$
    (both sides are the coarse polynomials $v|_{P(\cell_1)},v|_{P(\cell_2)}$),
    so the fine subfaces of a fixed $F'$ reassemble it:
    $$\sum_{F\subset F'}\|\llbracket\partial_n^k v\rrbracket\|^2_{0,F}
        = \|\llbracket\partial_n^k v\rrbracket\|^2_{0,\bigcup F}
        \le \|\llbracket\partial_n^k v\rrbracket\|^2_{0,F'}.$$
    With $h_\ell=H/2$
    the fine ghost weight satisfies $h_\ell^{2k-1}=2^{-(2k-1)}H^{2k-1}\le
        \tfrac12 H^{2k-1}$ for every $k\ge1$, and the term $k=0$ vanishes since
    $v\in H^1(\Omega_{\ell-1})$ is continuous. Hence, summing over $k$ and over
    the fine ghost faces on a fixed $F'$,
    \begin{gather*}
        \sum_{F\subset F'}\sum_{k=1}^p
        \frac{h_\ell^{2k-1}}{(k!)^2}
        \bigl\|\llbracket\partial_n^k v\rrbracket\bigr\|^2_{0,F}
        \;\le\; \tfrac12 \sum_{k=1}^p \frac{H^{2k-1}}{(k!)^2}
        \bigl\|\llbracket\partial_n^k v\rrbracket\bigr\|^2_{0,F'} .
    \end{gather*}
    Every such $F'$ belongs to the coarse-level ghost-penalty set: a fine face $F\in\mathbb F_G^\ell$ has a cut
    neighboring cell, whose coarse parent is then cut, so $F'\in\mathbb
        F_G^{\ell-1}$. Multiplying by the global weight $\gamma$, common to
    both levels, and summing over all faces in $\mathbb F_G^{\ell-1}$, with no trace,
    Markov or inverse inequality, gives
    \begin{gather}\label{eq:ghost-weight}
        g_\ell(w,w) \;\le\; \tfrac12\, g_{\ell-1}(v,v)
        \;\le\; \tfrac12\, N_{\ell-1}(v)^2,
    \end{gather}
    the coarse ghost penalty being a term of the coarse
    norm~\eqref{eq:triple-norm}. With Step 2 this proves~\eqref{eq:goal-N}
    with $C=2$, and the lemma follows with
    $C_{\mathrm{pr}}=(2C_A/c_A)^{1/2}$,
    independent of $\ell$ and of the cut configuration. The mesh-norm
    estimate~\eqref{eq:goal-N} is independent of the polynomial degree; any
    dependence of $C_{\mathrm{pr}}$ on $p$ enters solely through the
    norm-equivalence factor $C_A/c_A$ of~\eqref{eq:coercive}.
\end{proof}

\begin{remark}[$p$-uniformity of the ghost transfer]\label{rem:strengthened}
    % Distinguish the transfer estimate from the energy-norm constants.
    Inequality~\eqref{eq:ghost-weight} gives the sharper estimate that the fine-level ghost energy of a prolongated coarse
    function is controlled by the \emph{coarse} ghost penalty on the same faces, with the absolute factor $\tfrac12$ and no
    dependence on $p$. Hence the transfer contributes no $p$-growth in the mesh-dependent norms. Its energy-norm constant
    $C_{\mathrm{pr}}=(2C_A/c_A)^{1/2}$ nevertheless inherits the dependence of the norm equivalence~\eqref{eq:coercive};
    the remaining dependence in Theorem~\ref{thm:twolevel} enters through the stable-decomposition constant
    $C_{\mathrm{sd}}(p)$ of Theorem~\ref{thm:additive}. The reverse comparison for fine-grid functions need not hold
    because fine faces inside coarse cells need not be coarse ghost faces, but the Schwarz framework does not require it.
\end{remark}

The stable decomposition below instead controls the ghost energy of a \emph{single-level} finite element function by
its $H^1$-seminorm. Unlike the transfer, this comparison is $p$-dependent.

\begin{lemma}[ghost energy vs.\ the $H^1$-seminorm]\label{lem:ghostchain}
    Let $\cell$ be a mesh cube of edge $H$ and $F\subset\partial\cell$ a face
    with normal coordinate $n$. For every $q\in\Q_p(\cell)$ and every $1\le
        k\le p$,
    \begin{gather}\label{eq:one-sided}
        \|\partial_n^k q\|^2_{0,F}
        \le C_T (p+1)^2 H^{-1}\bigl(C_M^2 p^4 H^{-2}\bigr)^{k-1}
        |q|^2_{1,\cell},
    \end{gather}
    with $C_T$ the constant of the trace inequality~\eqref{eq:trace} and $C_M$
    that of the one-dimensional $L^2$ Markov inequality~\eqref{eq:markov}.
    Consequently:
    \begin{itemize}
        \item[(a)] for a ghost face $F$ between cells $\cell_1,\cell_2$ of edge
              $H$, union $\omega_F$, and any $u$ continuous across $F$ with
              $u|_{\cell_i}\in\Q_p(\cell_i)$, $i=1,2$,
              \begin{gather}\label{eq:one-face}
                  \sum_{k=0}^p \frac{H^{2k-1}}{(k!)^2}
                  \bigl\|\llbracket\partial_n^k u\rrbracket\bigr\|^2_{0,F}
                  \le \Theta'_p\, |u|^2_{1,\omega_F},
                  \qquad
                  \Theta'_p := 2\,C_T (p+1)^2 \sum_{k=1}^{p}
                  \frac{(C_M^2 p^4)^{k-1}}{(k!)^2};
              \end{gather}
        \item[(b)] for any level $m$ and any $u\in V_m$,
              $g_m(u,u) \le \gamma\, 2d\,\Theta'_p\, |u|^2_{1,\Omega_m}$,
              with $\gamma$ the global weight of~\eqref{eq:intro-ghost}.
    \end{itemize}
\end{lemma}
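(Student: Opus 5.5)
The plan is to prove \eqref{eq:one-sided} in two stages: one trace inequality, followed by $k-1$ one-dimensional Markov steps in the normal direction. Then (a) and (b) follow by summing.

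\emph{Stage 1 (trace).} Set $r:=\partial_n q$. Since a directional derivative of $q\in\Q_p(\cell)$ stays in $\Q_p(\cell)$, we have $r\in\Q_p(\cell)$, and $\partial_n^k q=\partial_n^{k-1}r$. I apply the polynomial trace inequality~\eqref{eq:trace} to $s:=\partial_n^{k-1}r\in\Q_p(\cell)$, in the form $\|s\|_{0,F}^2\le C_T(p+1)^2H^{-1}\|s\|_{0,\cell}^2$. Because of the tensor-product structure, this reduces to the one-dimensional endpoint estimate along normal lines, with the tangential variables frozen.

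\emph{Stage 2 (Markov steps).} I bound $\|\partial_n^{k-1}r\|_{0,\cell}^2$ by applying~\eqref{eq:markov} $k-1$ times along each line parallel to the normal axis. On such a line the restriction of $\partial_n^{j}r$ is a univariate polynomial of degree at most $p$. The one-dimensional Markov inequality on an interval of length $H$ gives $\|\phi'\|_{0,I}^2\le C_M^2p^4H^{-2}\|\phi\|_{0,I}^2$. Integrating over the tangential variables yields $\|\partial_n^{j+1}r\|_{0,\cell}^2\le C_M^2p^4H^{-2}\|\partial_n^jr\|_{0,\cell}^2$, with no loss in the constant. Iterating this down to $j=0$ and using $\|r\|_{0,\cell}\le|q|_{1,\cell}$ gives~\eqref{eq:one-sided}. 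The case $k=1$ is just the trace step.

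\emph{Parts (a) and (b).} For (a), the $k=0$ term vanishes because $u$ is continuous across $F$. For $k\ge1$ I use $\|\llbracket\partial_n^k u\rrbracket\|_{0,F}^2\le 2(\|\partial_n^k u_1\|_{0,F}^2+\|\partial_n^k u_2\|_{0,F}^2)$ and apply~\eqref{eq:one-sided} on each cell. Multiplying by $H^{2k-1}/(k!)^2$ makes the powers of $H$ cancel exactly, since $H^{2k-1}\cdot H^{-1}\cdot H^{-2(k-1)}=1$. The sum $|u_1|^2_{1,\cell_1}+|u_2|^2_{1,\cell_2}=|u|^2_{1,\omega_F}$ then gives $\Theta'_p$ with the factor $2$. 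For (b), I sum (a) over all ghost faces $F\in\mathbb F_G^m$ and multiply by $\gamma$. Each cell has at most $2d$ faces, so each cell appears in at most $2d$ patches $\omega_F$. This gives $g_m(u,u)\le\gamma\,2d\,\Theta'_p\,|u|^2_{1,\Omega_m}$.

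\emph{Expected obstacle.} The only delicate point is Stage 1. I must ensure that the trace constant applied to $\partial_n^{k-1}r$ is the single constant $C_T(p+1)^2$, not a multidimensional one with extra $p$ factors. I would try to obtain this by the same line-by-line reduction: the one-dimensional inverse trace bound $|\phi(0)|^2\le C(p+1)^2H^{-1}\|\phi\|^2_{0,I}$ on each normal line, integrated over $F$. The remaining bookkeeping only requires checking that the mesh-size exponents match the weights $h_F^{2k-1}$.
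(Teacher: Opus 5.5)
Your proposal is correct and follows the paper's proof essentially step for step: the tensorized endpoint trace bound applied to $\partial_n^k q$ along normal lines, the $(k-1)$-fold one-dimensional Markov iteration integrated over the tangential variables, the bound $\|\partial_n q\|_0\le|q|_1$, and then the factor-$2$ jump splitting for (a) and the $2d$ finite-overlap count for (b). Your resolution of the anticipated obstacle, the line-by-line endpoint bound $|\phi(0)|^2\le(p+1)^2H^{-1}\|\phi\|^2_{0,I}$ for univariate $\phi$ of degree at most $p$, is exactly how the paper justifies the trace inequality with $C_T=2d$.
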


\begin{proof}
    Since the mesh is Cartesian, $n$ is a coordinate direction and
    $\partial_n^k$ a pure coordinate derivative. The polynomial trace
    inequality on the cube $\cell$,
    \begin{gather}\label{eq:trace}
        \|s\|^2_{0,\partial \cell} \le C_T\,(p+1)^2 H^{-1} \|s\|^2_{0,\cell},
        \qquad s\in\Q_p(\cell),
    \end{gather}
    (the explicit constants of Warburton and Hesthaven~\cite{WarburtonHesthaven03}
    are stated for total-degree $\mathbb P_p$ on simplices; on a cube the
    inequality follows instead, with $C_T=2d$, by tensorizing the endpoint
    Christoffel bound~\eqref{eq:christoffel} of Appendix~\ref{app:spd-floor},
    whose reproducing kernel has endpoint value $(p+1)^2H^{-1}$ for
    $\mathbb P_p$ on an interval of length $H$: freezing the tangential
    variables and integrating over them bounds $\|s\|^2_{0,F}$ by
    $(p+1)^2H^{-1}\|s\|^2_{0,\cell}$ on each of the $2d$ faces),
    applied to $s=\partial_n^k q$, together with the $L^2$ Markov inequality
    and its $(k-1)$-fold iteration in the coordinate $n$,
    \begin{gather}\label{eq:markov}
        \begin{aligned}
            \|\partial_n r\|_{0,\cell}
             & \le C_Mp^2H^{-1}\|r\|_{0,\cell},
             &                                    & r\in\Q_p(\cell),
            \\
            \|\partial_n^{k}q\|_{0,\cell}
             & \le \bigl(C_Mp^2H^{-1}\bigr)^{k-1}
            \|\partial_nq\|_{0,\cell},
             &                                    & 1\le k\le p .
        \end{aligned}
    \end{gather}
    ($C_M$ the absolute one-dimensional
    constant~\cite{Schwab98,OzisikRiviereWarburton10}, unchanged by
    tensorization: by the tensor-product structure~\eqref{eq:Qp} the restriction
    of $q$ to any line parallel to $n$ is a univariate polynomial of degree at
    most $p$, so the one-dimensional inequality applies for each frozen value of
    the tangential variables and integrating over them preserves the constant),
    and $\|\partial_n q\|_{0,\cell}\le|q|_{1,\cell}$, give
    \eqref{eq:one-sided}. For (a),
    $$\|\llbracket\partial_n^k
        u\rrbracket\|^2_{0,F}\le 2\|\partial_n^k u_1\|^2_{0,F}+2\|\partial_n^k
        u_2\|^2_{0,F}$$
    with $u_i:=u|_{\cell_i}$, the $k=0$ term vanishing by
    continuity; insert~\eqref{eq:one-sided}, multiply by
    $H^{2k-1}/(k!)^2$ and sum over $k$; the factor $2$ of the jump estimate
    is precisely the factor appearing in $\Theta'_p$. For (b), multiply (a) by $\gamma$
    and sum over the ghost faces
    of level $m$: each $\omega_F$ consists of the two cells sharing $F$, so a
    cell of $\mesh_{m,\Omega}$ appears in at most one neighbourhood per face,
    i.e.\ in at most $2d$ of them, giving the finite-overlap factor.
\end{proof}

\begin{remark}[$\Theta'_p$ is the superexponential ingredient]\label{rem:ghostchain}
    % Identify the superexponential factor and derive its growth rate.
    After the $p$-uniform ghost transfer (Remark~\ref{rem:strengthened}), $\Theta'_p$ is the sole \emph{super}exponential
    ingredient of the two-level rate~\eqref{eq:twolevel} --- the only other non-polynomial factor is the exponential chain
    amplification $\Xi_p$ of Proposition~\ref{prop:cutfem} (Remark~\ref{rem:where-exponential}). The successive ratios
    satisfy
    \begin{equation}
        \frac{C_M^2p^4}{k^2}\ge C_M^2p^2,
        \qquad 1\le k\le p,
    \end{equation}
    so the terms grow throughout the truncation range and the last term
    dominates. Consequently,
    \begin{equation}
        \log\Theta'_p = 2p\log p + O(p),
        \qquad \Theta'_p=(Cp)^{2p}e^{O(p)},
    \end{equation}
    despite the weights $1/(k!)^2$.

    % Explain why the Bessel bound is loose and interpret the growth.
    Completing the sum to the Bessel series gives
    \begin{equation}
        \sum_{k\ge1}\frac{(C_M^2p^4)^k}{(k!)^2}
        =I_0(2C_Mp^2)-1\sim e^{2C_Mp^2}.
    \end{equation}
    This bound is loose because the dominant index of the full series,
    $k\approx C_Mp^2$, lies beyond the truncation at $k=p$. Thus the growth
    reflects the iterated Markov estimate, but it remains the dominant source
    of the tracked $p$-dependence of $C_{\mathrm{sd}}(p)$; its sharpness is
    examined in Section~\ref{sec:p-dependence}.
\end{remark}

%%%%%%%%%%%%%%%%%%%%%%%%%%%%%%%%%%%%%%%%%%%%%%%%%%%%%%%%%%%%
\section{Vertex-patch smoother and method implementation}
\label{sec:smoother}
%%%%%%%%%%%%%%%%%%%%%%%%%%%%%%%%%%%%%%%%%%%%%%%%%%%%%%%%%%%%

% Define the active vertex patches and fix the patch enumeration.
The construction of~\cite{CuiKanschat25CutFEM} selects as patch centers every vertex in $\Omega$ and every exterior
vertex with at least one intersected adjacent cell; the exterior vertices must generate their own patches to cover
their vertex degrees of freedom. We call the selected centers \emph{admissible} and enumerate them
$\vertex_1,\dots,\vertex_J$, so that $J=J(\ell)$ is the number of local problems on level $\ell$. For each of them, let
$\widehat\omega_j$ be the full background-grid vertex patch of $\vertex_j$ and let $\omega_j$, also denoted by
$\Omega_{\ell,j}$, be the union of the active cells in $\widehat\omega_j$. Near the fictitious boundary, $\omega_j$ may
consist of fewer than $2^d$ cells even though $\widehat\omega_j$ is the full vertex patch.

% Define the two local spaces on the same patch.
The two smoother variants differ only in the local degrees of freedom. The construction from~\cite{CuiKanschat25CutFEM}
imposes a homogeneous Dirichlet condition on the entire patch boundary and uses
\begin{gather}\label{eq:classical-patch-space}
    V_{\ell,j}^{\mathrm D}
    := \operatorname{span}\bigl\{\phi_{\ell,i}:x_{\ell,i}\in
    \operatorname{int}(\widehat\omega_j)\bigr\},
\end{gather}
where $x_{\ell,i}$ is the node associated with $\phi_{\ell,i}$. Consequently none of the degrees of freedom on
$\partial\widehat\omega_j$ is updated. The central vertex $\vertex_j$ is nevertheless an interior node of
$\widehat\omega_j$, including when $\vertex_j\notin\Omega$ and $\omega_j$ is only a partial patch.
For a nodal basis function $\phi_{\ell,i}$, let $\omega_i$ be the union of all
active cells containing its node in their closure. The full-residual
construction instead uses
\begin{gather}\label{eq:full-residual-patch-space}
    V_{\ell,j}^{\mathrm{FR}}
    := \operatorname{span}\bigl\{\phi_{\ell,i}:\omega_i\subseteq\omega_j\bigr\}.
\end{gather}
It therefore updates every degree of freedom whose complete active-cell
support is confined to the patch. In particular, a degree of freedom on
$\partial\Omega_\ell$ is included when all active cells containing it belong to
$\omega_j$; no artificial Dirichlet condition is imposed on that part of the
patch boundary.

% Attribute full-residual patches and mention shyness as an optional reduction.
The full-residual construction was introduced in~\cite{wichrowski2026SBMmg}, together with \emph{shyness}, which omits
centers adjacent to fewer than a prescribed number of active cells; here we use the unfiltered patch set.

% Define an exact local correction for either patch space.
Let $V_{\ell,j}$ denote either $V_{\ell,j}^{\mathrm D}$ or $V_{\ell,j}^{\mathrm{FR}}$. Given an iterate $u^{(m)}_\ell$,
the patch correction $s_j\in V_{\ell,j}$ is defined by
\begin{gather}\label{eq:local-smoother}
    A_\ell(s_j,v_j)
    = f_\ell(v_j)-A_\ell(u^{(m)}_\ell,v_j)
    \qquad\forall v_j\in V_{\ell,j}.
\end{gather}
Here $f_\ell$ is the global load functional. Both variants restrict the same
stabilized operator and the same global residual to their respective local
spaces. Thus the term \emph{full residual} refers to retaining all residual
components associated with the confined degrees of freedom in
\eqref{eq:full-residual-patch-space}; it does not denote a different global
operator. After the update
$u^{(m)}_\ell\leftarrow u^{(m)}_\ell+s_j$, the residual is recomputed before
the next patch solve. A sequential sweep is therefore multiplicative, whereas
patches of one color may be processed simultaneously as described later in
Section~\ref{sec:schwarz}.

% Describe the repeated treatment of the boundary strip.
One smoothing step visits every interior patch once and may repeat the patches whose cells meet the cut strip $n_c\ge
    1$ times~\cite{CuiKanschat25CutFEM}. These repetitions change the strength of smoothing near $\Gamma$ but not the local
problem \eqref{eq:local-smoother}. The resulting sweep is followed by a coarse-grid correction on $V_{\ell-1}$.

\subsection{The covering problem and two local spaces}

% State the algebraic covering requirement and fix the terminology.
For the smoother to act on all components of $V_\ell$, every nodal basis function must belong to at least one local
space. We say that patch $j$ \emph{updates} the degree of freedom $i$ when $\phi_{\ell,i}\in V_{\ell,j}$; for the
classical space this holds exactly when the node $x_{\ell,i}$ is interior to $\widehat\omega_j$, for the full-residual
space exactly when $\omega_i\subseteq\omega_j$. Since a node interior to $\widehat\omega_j$ has all of its background
cells in $\widehat\omega_j$, the classical space is contained in the full-residual space on the same patch, and a
degree of freedom updated by patch $j$ in the classical construction is updated by it in the other one as well. We use
the following form of the requirement in the analysis.

\begin{assumption}[covering]\label{ass:covering}
    Every degree of freedom of $V_\ell$ is updated by at least one admissible
    patch: for each nodal basis function $\phi_{\ell,i}$ there is
    $j=j(i)\in\{1,\dots,J\}$ with
    $\phi_{\ell,i}\in V_{\ell,j(i)}$.
\end{assumption}

% Explain how covering provides an implementation-level verification check.
Assumption~\ref{ass:covering} can be checked during construction by marking every degree of freedom included in a local
space; any unmarked degree of freedom is not updated by the smoother.

\begin{remark}\label{rem:covering}
    % Explain why exterior vertices must generate patches.
    A degree of freedom at $\vertex\in\Omega_\ell\setminus\Omega$ is covered by the patch centered at $\vertex$ itself, so
    omitting exterior centers would violate Assumption~\ref{ass:covering}. Both constructions retain these patches.
\end{remark}

% Contrast the two spaces on partial patches.
On an interior patch, nodes on $\partial\widehat\omega_j$ have support outside the patch and are updated by
neighbouring patches. On a partial patch, some such nodes lie on $\partial\Omega_\ell$ and have no active support
outside $\omega_j$. The full-residual space includes precisely these confined degrees of freedom while excluding every
degree of freedom whose support reaches an active cell outside $\omega_j$.

% Explain how the numerical comparison isolates the local-space effect.
Because the variants share the operator, residual, penalty weights, patch centers, and sweep, the comparison below
isolates the local-space restriction; Section~\ref{sec:p-dependence} studies the remaining degree dependence.

\subsection{Comparison of the two variants}
\label{sec:num_setting}
% Define the primary geometry and background-mesh hierarchy used in the implementation.
The principal implementation setting throughout this work is the Poisson problem on the unit disc centered at the
origin, embedded in the square $[-1.21,1.21]^2$. The boundary is represented by the exact level-set function. The
coarsest Cartesian background mesh consists of two cells in each coordinate direction, hence $2^d$ cells, and every
subsequent level is obtained by uniform dyadic refinement. Unless stated otherwise, all numerical experiments below use
this circular configuration; the level $L$ and the smoother parameters are specified for each experiment.

% Specify the discretization and multigrid cycle used in the experiments.
The finite element space is the Gauss--Lobatto nodal $\Q_p$ space on the active cells. The tensor-product ghost-penalty
formulation of~\cite{wichrowski2026TensorGhostPenalty} is used with the standard derivative-jump weights and the global
factor $\gamma$, which may vary among experiments. The multigrid hierarchy uses one pre-smoothing and one
post-smoothing step, with an exact solve on the coarsest level. All reported iterations start from the zero vector and
stop when the Euclidean residual norm has been reduced by a factor $10^{-8}$ relative to the norm of the right-hand
side, with an iteration cap of $200$. Every patch update uses unit relaxation. Thus the semi-multiplicative experiments
use the undamped choice $\theta=1$; Corollary~\ref{cor:hybrid} proves a cut-uniform estimate only for its damped
counterpart. All full multilevel experiments intentionally use a V-cycle, which is stronger than the two-level and
conditional W-cycle results proved here and is not covered by those results.

% State the solver configuration and reporting convention for the local-space comparison.
For the comparison in Table~\ref{tab:covering}, the finest level is $L=5$ and the ghost-penalty weight is fixed at
$\gamma=10^{-1}$ for every degree. Both variants use the same assembled operator, global residual, right-hand side, and
semi-multiplicative sweep with $n_c=1$; only the local patch spaces differ. We report the number of preconditioned
GMRES iterations required by the common stopping criterion stated above.

% Report the numerical effect of removing the overconstraint.
Table~\ref{tab:covering} compares the two local-space definitions for the same discrete problem. Updating all confined
degrees of freedom lowers the iteration count at every measured degree, by a factor between $1.6$ and $1.8$ from $\Q_3$
onward. The discrete operator and global right-hand side are unchanged; the difference is entirely due to the patch
subspaces. The growth with $p$ persists in both columns: at the fixed weight $\gamma=10^{-1}$ the counts still grow by
an order of magnitude between $\Q_3$ and $\Q_6$, and neither variant converges within $200$ iterations at $\Q_7$. The
overconstraint therefore accounts for a constant factor of the degree dependence but not for its growth.
Section~\ref{sec:p-dependence} investigates the remaining dependence, with particular attention to the ghost-penalty
weights.

%% Measured with the SEMI-MULTIPLICATIVE sweep (smoother = semi-multiplicative),
%% n_c = 1, gamma = 0.1, n levels = 6 (paper L=5), symmetric Nitsche with
%% gamma_D = 5(p+1)p (nitsche sign = 1, nitsche parameter = 5), full/classical
%% patch spaces the only difference between the rows. Both Q7 runs reach the
%% 200-iteration cap (last residuals 5.6e-4 classical, 1.3e-5 full residual);
%% every other entry is sane. Script: numerical/calc/sweep_covering.sh
\begin{table}[htbp]
    \centering
    \caption{GMRES iteration counts for the classical Dirichlet patch spaces
        and the full-residual patch spaces. Both use the same global residual,
        the semi-multiplicative sweep with $n_c=1$, an $L=5$ hierarchy, the fixed
        ghost-penalty weight $\gamma=10^{-1}$, unit relaxation ($\theta=1$), symmetric Nitsche at
        $\gamma_D=5(p+1)p$, and the circular test domain. The iteration cap is
        $200$; at $p=7$ neither variant converges within it at this fixed
        $\gamma$.}
    \label{tab:covering}
    \begin{tabular}{lccccccc}
        \toprule
        local patch space                   & $\Q_1$ & $\Q_2$ & $\Q_3$ & $\Q_4$ & $\Q_5$ & $\Q_6$ & $\Q_7$ \\
        \midrule
        classical, $V_{\ell,j}^{\mathrm D}$ & $8$    & $9$    & $17$   & $34$   & $72$   & $175$  & $>200$ \\
        full residual, $V_{\ell,j}^{\mathrm{FR}}$
                                            & $7$    & $6$    & $10$   & $20$   & $44$   & $97$   & $>200$ \\
        \bottomrule
    \end{tabular}
\end{table}

\subsection{Verification of the covering}

% Identify the degrees of freedom covered by standard interior patches.
The following geometric lemma shows that cell-interior degrees of freedom and degrees of freedom on faces meeting
$\Omega$ require no special construction. It is stated for $d=2$; only the dimension-independent
Lemma~\ref{lem:coverimpl} is used in the convergence analysis.

\begin{lemma}[interior vertices of active cells, $d=2$]\label{lem:vertex}
    Under Assumption~\ref{ass:geometry}, every cell $\cell\in\mesh_{\ell,\Omega}$
    has at least one vertex in $\Omega$, and every face $F$ of
    $\mesh_{\ell,\Omega}$ with $F\cap\Omega\ne\emptyset$ has at least one
    endpoint in $\Omega$. Consequently every degree of freedom located in a
    cell interior, and every degree of freedom located on a face meeting
    $\Omega$, is updated by a patch centered at a vertex of $\Omega$, already
    in the classical construction.
\end{lemma}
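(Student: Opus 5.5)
The plan is to treat cells and faces separately, using only the cut-cell conditions of Assumption~\ref{ass:geometry} together with the fact that $\Gamma$ contains no mesh vertex. Then I would read off the covering statement from the geometry of the classical space~\eqref{eq:classical-patch-space}.

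\emph{Cells.} Let $\cell\in\mesh_{\ell,\Omega}$, so $\cell\cap\Omega\neq\emptyset$. If $\Gamma\cap\cell=\emptyset$, then $\cell$ is connected and disjoint from $\Gamma=\partial\Omega$, so $\cell\subset\Omega$ and all four vertices lie in $\Omega$. If $\cell$ is cut, then $\Gamma\cap\cell$ is a single arc meeting $\partial\cell$ in exactly two points $x_1,x_2$. Neither point is a vertex, since $\Gamma$ contains no vertex. Since each closed edge is hit at most once, $x_1$ and $x_2$ lie in the relative interiors of two \emph{different} edges.

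The loop $\partial\cell$ is thus split into two open arcs, and each arc runs from one edge to another and so contains at least one vertex. The arc $\Gamma\cap\cell$ is a chord of the topological disc $\cell$. Together with the hypothesis that $\cell\setminus\Gamma$ has exactly two components, this shows that each component contains exactly one of the two boundary arcs.

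Because $\Gamma$ is a $C^2$ boundary, $\Omega$ lies locally on exactly one side of it. Hence the two components lie on opposite sides of $\Gamma$: one is contained in $\Omega$ and the other in its complement. The component contained in $\Omega$ contains a boundary arc, hence a vertex of $\cell$, and that vertex lies in $\Omega$.

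\emph{Faces.} Let $F$ be an edge with $F\cap\Omega\neq\emptyset$. If $\Gamma\cap F=\emptyset$, the connected set $F$ lies in $\Omega$, and so do both endpoints. Otherwise $\Gamma\cap F=\{x\}$ is a single non-vertex point, and $F\setminus\{x\}$ consists of two half-open segments, each connected and disjoint from $\Gamma$. Each segment therefore lies entirely in $\Omega$ or entirely outside $\overline\Omega$. At least one segment meets $\Omega$, because $F\cap\Omega\ne\emptyset$ and $x\notin\Omega$. That segment lies in $\Omega$, and its endpoint vertex lies in $\Omega$. This argument needs no transversality, so tangential touching causes no trouble.

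\emph{Covering.} Every vertex $\vertex\in\Omega$ is an admissible center, and its full patch $\widehat\omega$ consists of the $4$ background cells sharing $\vertex$. A node in the interior of a cell $\cell$ with vertex $\vertex$ is interior to $\widehat\omega$. A node in the relative interior of an edge $F$ with endpoint $\vertex$ is also interior to $\widehat\omega$, because in $d=2$ such an edge is shared by two cells of the patch. So in both cases the basis function belongs to $V^{\mathrm D}_{\ell,j}$, and therefore also to $V^{\mathrm{FR}}_{\ell,j}$.

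The main obstacle is the topological step for cut cells: showing that the two components of $\cell\setminus\Gamma$ lie on opposite sides of $\Gamma$ and each contain a boundary arc. I would settle it via the Jordan curve theorem applied to the chord in the disc $\cell$, combined with the local one-sidedness of the $C^2$ boundary $\Gamma$.
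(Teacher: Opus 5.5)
Your proposal is correct and follows the paper's proof essentially step by step. It uses the same uncut/cut split for cells, the same two open boundary arcs each containing a vertex (the paper gets this by counting the opposite-edge and adjacent-edge cases), the same at-most-once crossing for faces, and the same observation that cell-interior and edge-interior nodes lie in the interior of the $2\times2$ patch of a suitable vertex. Your half-open-segment argument for faces is slightly more careful than the paper's claim that $F\cap\Omega$ is a single segment. The appeal to local one-sidedness of $\Gamma$ is unnecessary: each component of the cut cell minus $\Gamma$ is connected and disjoint from $\Gamma$, and the cell meets $\Omega$, so one component already lies in $\Omega$.
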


\begin{proof}
    % Prove the cell and face statements from the cut geometry.
    If $\cell\cap\Omega\ne\emptyset$ and $\cell\cap\Gamma=\emptyset$, then by connectedness $\cell\subset\Omega$ and all
    vertices are interior. If $\cell$ is cut, Assumption~\ref{ass:geometry} states that $\Gamma$ crosses $\partial\cell$ in
    exactly two points lying on two distinct closed faces, and $\cell\setminus\Gamma$ has exactly two components, one
    contained in $\Omega$. Removing the two crossing points from the closed curve $\partial\cell$ leaves two open arcs,
    each belonging to the closure of one component. If the two crossed faces are opposite, each arc contains two vertices;
    if adjacent, one arc contains one vertex and the other three. In all cases the arc bounding the $\Omega$-component
    contains at least one vertex, which lies in $\Omega$ (not on $\Gamma$ by the generic position assumption). The face
    statement is analogous: if $\Gamma$ crosses the face $F$ (at most once), the portion $F\cap\Omega$ is a segment with
    one endpoint a vertex; if not, $F\subset\Omega$. A degree of freedom in the interior of $\cell$ has its node interior
    to the background patch of any vertex of $\cell$, and one in the relative interior of a face $F$ has its node interior
    to the background patch of either endpoint of $F$ (in the $2\times2$ patch around an endpoint $\vertex$, the faces
    emanating from $\vertex$ lie in the interior of the patch). Either node is therefore updated by the corresponding patch
    in the sense of~\eqref{eq:classical-patch-space}.
\end{proof}

% Isolate the degrees of freedom that require an augmented construction.
The gap between Lemma~\ref{lem:vertex} and Assumption~\ref{ass:covering} consists of vertices in
$\Omega_\ell\setminus\Omega$ and degrees of freedom on mesh entities lying entirely outside $\Omega$. The
exterior-centered patches of~\cite{CuiKanschat25CutFEM} cover this gap. The full-residual construction uses the same
patch centers and additionally includes nodes on $\partial\widehat\omega_j$ whenever their complete active support is
contained in $\omega_j$.

\begin{lemma}[the implemented patch sets cover]\label{lem:coverimpl}
    Let the patch set consist of the patches of all vertices
    $\vertex\in\Omega$ and of all vertices having at least one intersected
    adjacent cell. Then both the classical spaces
    $V_{\ell,j}^{\mathrm D}$ of~\eqref{eq:classical-patch-space} and the
    full-residual spaces $V_{\ell,j}^{\mathrm{FR}}$
    of~\eqref{eq:full-residual-patch-space} satisfy
    Assumption~\ref{ass:covering}.
\end{lemma}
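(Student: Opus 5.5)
Since the classical space is contained in the full-residual space on the same patch (the inclusion $V_{\ell,j}^{\mathrm D}\subseteq V_{\ell,j}^{\mathrm{FR}}$ noted before Assumption~\ref{ass:covering}), I will prove covering only for the classical spaces. The full-residual case then follows at once. The goal is to show that for every nodal basis function $\phi_{\ell,i}$ of $V_\ell$ some admissible vertex $\vertex_j$ has its node $x_{\ell,i}$ in $\operatorname{int}(\widehat\omega_j)$. The argument will be dimension-independent and will use only the background Cartesian structure, not the geometry of $\Gamma$. In particular it will not rely on Lemma~\ref{lem:vertex}, which is stated only for $d=2$.

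\emph{Step 1 (reduction to a vertex of the carrying entity).} Let $x=x_{\ell,i}$ be a Gauss--Lobatto node of an active cell, and let $E$ be the unique mesh entity (vertex, edge, face, or cell) whose relative interior contains $x$. For every vertex $\vertex$ of $E$, the entity $E$ and all cells sharing it lie in the background patch $\widehat\omega(\vertex)$, and $x$ lies in $\operatorname{int}\widehat\omega(\vertex)$. Indeed, $\widehat\omega(\vertex)$ is the cube $\vertex+[-h_\ell,h_\ell]^d$, and a point of the relative interior of an entity having $\vertex$ as a vertex has every coordinate strictly within distance $h_\ell$ of $\vertex$. This is the same observation used in the proof of Lemma~\ref{lem:vertex}, but here it is made for general $d$. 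It therefore suffices to show that $E$ has at least one admissible vertex.

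\emph{Step 2 (existence of an admissible vertex).} Since $x$ is a degree of freedom of $V_\ell$, $E$ lies in the closure of some active cell $\cell\in\mesh_{\ell,\Omega}$. There are two cases.

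\begin{itemize}
\item[(i)] $\cell$ is intersected by $\Gamma$. Then every vertex of $\cell$, and in particular every vertex of $E\subseteq\overline{\cell}$, has an intersected adjacent cell, namely $\cell$. Such a vertex is admissible by definition of the patch set.
\item[(ii)] $\cell$ is not intersected. Then $\cell\cap\Omega\neq\emptyset$ and connectedness give $\cell\subset\Omega$, so every vertex of $\cell$ lies in $\overline\Omega$. The generic-position part of Assumption~\ref{ass:geometry} excludes vertices on $\Gamma$, so these vertices lie in $\Omega$ and are admissible.
\end{itemize}

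In either case any vertex of $E$ is admissible, and Step 1 concludes the proof.

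I expect the only delicate point to be Step 1 for nodes on the boundary of the active domain, for instance a vertex $\vertex\in\partial\Omega_\ell$, or a face node on the fictitious boundary. Here the classical space is defined through the \emph{full} background patch $\widehat\omega_j$, not through $\omega_j$. So a node lying on $\partial\Omega_\ell$ can still be interior to $\widehat\omega_j$, as the text stresses for exterior centers. I will make this explicit so that the Dirichlet condition on $\partial\widehat\omega_j$ is never confused with one on $\partial\omega_j$. I would also check that $\phi_{\ell,i}$, restricted to active cells, genuinely belongs to $V_\ell$ and to the local span, which is immediate from the definitions.
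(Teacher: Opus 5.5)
Your proof is correct and follows the paper's argument. You attach each node to the entity containing it in its relative interior, show that every vertex of that entity is an admissible center (an active cell through it is either intersected or lies in $\Omega$), and note that this relative interior lies in the interior of the full background patch $\widehat\omega_{\vertex}$. The only differences are cosmetic: you get the full-residual case from the inclusion $V_{\ell,j}^{\mathrm D}\subseteq V_{\ell,j}^{\mathrm{FR}}$, whereas the paper checks $\omega_i\subseteq\omega_{\vertex}$ directly, and you split on whether the active cell is cut rather than on whether the vertex lies in $\Omega$.
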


\begin{proof}
    % Attach every node to the mesh entity whose relative interior contains it, and to one of its vertices.
    Every node $x_{\ell,i}$ lies in the relative interior of exactly one entity $S$ of the background grid, of some
    dimension $m$ between $0$ and $d$: a vertex, an edge, a face of any intermediate dimension, or a cell. Fix such a node,
    let $S$ be its entity and let $\vertex$ be any vertex of $S$. A cell contains $x_{\ell,i}$ in its closure exactly when
    it contains $S$, and every such cell contains $\vertex$; since the node carries a degree of freedom of $V_\ell$, at
    least one of these cells is active.

    % The patch centered at that vertex is admissible.
    The patch centered at $\vertex$ belongs to the patch set. This holds by definition when $\vertex\in\Omega$. If
    $\vertex\notin\Omega$, an active cell $\cell$ as above meets $\Omega$ and contains $\vertex\notin\Omega$, hence is
    intersected, so $\vertex$ has an intersected adjacent cell and its patch is again included.

    % Both local spaces of that patch update the node.
    Both local spaces of this patch contain $\phi_{\ell,i}$. Placing $\vertex$ at the origin, $S$ is contained in
    $[0,h_\ell]^{m}\times\{0\}^{d-m}$ in suitable coordinates, while $\widehat\omega_{\vertex}=[-h_\ell,h_\ell]^d$; a point
    of $\operatorname{relint}S$ has all coordinates of modulus strictly smaller than $h_\ell$ and therefore lies in
    $\operatorname{int}\widehat\omega_{\vertex}$, which is~\eqref{eq:classical-patch-space}. Moreover, every cell
    containing $x_{\ell,i}$ in its closure contains $\vertex$ and thus belongs to $\widehat\omega_{\vertex}$, so the active
    support $\omega_i$ of the node satisfies $\omega_i\subseteq\omega_{\vertex}$, which
    is~\eqref{eq:full-residual-patch-space}. Thus both local-space constructions cover every node.
\end{proof}

% Summarize the consequence for the analysis.
Lemma~\ref{lem:coverimpl} verifies Assumption~\ref{ass:covering} for both implemented variants. The distinction is not
whether the degrees of freedom are covered, but which additional confined degrees of freedom each patch updates. The
Schwarz analysis below applies to both variants, while Table~\ref{tab:covering} quantifies the effect of enlarging the
classical local spaces without changing the patch centers.

%%%%%%%%%%%%%%%%%%%%%%%%%%%%%%%%%%%%%%%%%%%%%%%%%%%%%%%%%%%%
\section{Cut-robust two-level vertex-patch Schwarz theory}
\label{sec:schwarz}
%%%%%%%%%%%%%%%%%%%%%%%%%%%%%%%%%%%%%%%%%%%%%%%%%%%%%%%%%%%%

% Introduce the fitted active-domain viewpoint used in the analysis.
No fictitious-domain extension of a finite element function is needed: $\Omega_\ell$ is exactly meshed by
$\mesh_{\ell,\Omega}$, and $V_\ell$ is a conforming finite element space on the fitted, shape-regular Cartesian mesh of
the polygonal domain $\Omega_\ell$. The cut enters through the energy norm, which is defined on $\Omega$ and $\Gamma$
rather than on $\Omega_\ell$. The extension property~\eqref{eq:ghost-extension} connects these domains, while
Assumption~\ref{ass:covering} supplies the local-space coverage required by the Schwarz decomposition.

\subsection{An auxiliary trace inequality}

% State the cut-cell trace inequality.
We will use the cut-cell trace inequality of Hansbo--Hansbo \cite{HansboHansbo02}: for $\cell\in\mesh_{\ell,\Gamma}$
and $w\in H^1(\cell)$,
\begin{gather}\label{eq:cut-trace}
    \|w\|^2_{0,\Gamma\cap \cell}
    \le C\bigl( h_\ell^{-1}\|w\|^2_{0,\cell} + h_\ell\,|w|^2_{1,\cell}\bigr),
\end{gather}
with $C$ independent of the cut position, by Assumption~\ref{ass:geometry}.
\subsection{Verification of the working hypotheses}

% Introduce the one-face transfer mechanism and its degree dependence.
The next lemma gives the one-face estimate used in the ghost-penalty arguments. Its weights are the $L^2$-transfer
weights $h_F^{2}\,h_F^{2k-1}/(k!)^2$, an $h_F^2$ factor times the standard penalty weights, and its prefactor is the
polynomial \emph{extrapolation} constant $C_E(q)$. The exponential $p$-dependence in the verification below enters
through this constant.

\begin{lemma}[one-face transfer]\label{lem:transfer}
    % State the transfer inequality and its uniform extrapolation constant.
    Let $\cell',\cell\in\mesh_{\ell,\Omega}$ share the face $F$, let $q\ge0$ be an integer, and let $w$ be a piecewise
    polynomial of degree $\le q$ in the normal coordinate of $F$ on $\cell'\cup \cell$. Then
    \begin{gather}\label{eq:transfer}
        \|w\|^2_{0,\cell} \le 2\,C_E(q)^2\,\|w\|^2_{0,\cell'}
        + 2(q+1)\,h_\ell^2 \sum_{k=0}^{q} \frac{h_\ell^{2k-1}}{(k!)^2}
        \bigl\|\llbracket \partial_n^k w\rrbracket\bigr\|^2_{0,F},
    \end{gather}
    where $C_E(q)$ is the adjacent-interval extrapolation constant
    \begin{gather*}
        C_E(q):=\sup_{0\ne s\in\mathbb P_q}
        \frac{\|s\|_{L^2(1,3)}}{\|s\|_{L^2(-1,1)}}.
    \end{gather*}
    % Relate the interval definition to polynomial extension between cells.
    Equivalently, after affine scaling, it is the smallest uniform constant in $\|\tilde w\|_{0,\cell}\le
        C_E(q)\,\|w\|_{0,\cell'}$ for the polynomial extension $\tilde w$ of $w|_{\cell'}$ across $F$. Moreover, $C_E(0)=1$
    and, for $q\ge1$,
    \begin{gather}\label{eq:extrap}
        c\,q^{-1/2}\,(3+2\sqrt2)^{q}
        \;\le\; C_E(q) \;\le\; (q+1)\,(3+2\sqrt2)^{q} .
    \end{gather}
\end{lemma}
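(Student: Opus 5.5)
Since the estimate is one-dimensional in the normal coordinate, I will first reduce to an interval problem and then tensorize. Place $F$ at $x_n=0$, let $\cell'$ occupy $x_n\in(-h_\ell,0)$ and $\cell$ occupy $(0,h_\ell)$. Freeze the tangential variables; each restriction is then a univariate polynomial $w_1$ on $(-h_\ell,0)$ and $w_2$ on $(0,h_\ell)$, of degree $\le q$. Let $\tilde w$ be the polynomial extension of $w_1$ to $(0,h_\ell)$ and split $w_2=\tilde w+(w_2-\tilde w)$. The elementary bound $(a+b)^2\le 2a^2+2b^2$ produces the two terms of~\eqref{eq:transfer}. The first term is $\|\tilde w\|^2_{L^2(0,h_\ell)}\le C_E(q)^2\|w_1\|^2_{L^2(-h_\ell,0)}$, by the definition of $C_E(q)$ after affine rescaling. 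Rescaling sends $(-h_\ell,0),(0,h_\ell)$ to $(-1,1),(1,3)$, and since both intervals rescale by the same factor, the ratio is scale invariant. Integrating over the tangential variables gives $2C_E(q)^2\|w\|^2_{0,\cell'}$.

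For the second term, $r:=w_2-\tilde w$ is a polynomial of degree $\le q$ whose Taylor coefficients at $0$ are the jumps, namely $r^{(k)}(0)=\llbracket\partial_n^k w\rrbracket$. Write $r(x)=\sum_{k=0}^q \llbracket\partial_n^k w\rrbracket x^k/k!$. Cauchy--Schwarz over the $q+1$ terms gives $r(x)^2\le (q+1)\sum_k |\llbracket\partial_n^k w\rrbracket|^2x^{2k}/(k!)^2$. Integrating over $(0,h_\ell)$ yields $\int_0^{h_\ell}x^{2k}\,dx=h_\ell^{2k+1}/(2k+1)\le h_\ell^2\,h_\ell^{2k-1}$. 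Integrating over the tangential variables (the face $F$) then gives exactly $(q+1)h_\ell^2\sum_k h_\ell^{2k-1}(k!)^{-2}\|\llbracket\partial_n^kw\rrbracket\|^2_{0,F}$, and the factor $2$ comes from the splitting. This proves~\eqref{eq:transfer}. The case $C_E(0)=1$ is immediate, since constants have equal $L^2$ norms on intervals of equal length.

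The main work is the two-sided bound~\eqref{eq:extrap}. For the upper bound I expand $s=\sum_{j\le q}a_jL_j$ in orthonormal Legendre polynomials on $(-1,1)$, so that $\|s\|_{L^2(-1,1)}^2=\sum a_j^2$. On $[1,3]$ I will use $|L_j(x)|\le \sqrt{(2j+1)/2}\,|P_j(x)|$ together with monotone growth of $P_j$ for $x\ge1$. This gives $\max_{[1,3]}|P_j|=P_j(3)$, and the Laplace integral $P_j(3)=\frac1\pi\int_0^\pi(3+\sqrt8\cos\phi)^j\,d\phi\le(3+2\sqrt2)^j$. Then Cauchy--Schwarz gives $\|s\|_{L^2(1,3)}\le\sqrt2\,\bigl(\sum_j\tfrac{2j+1}{2}(3+2\sqrt2)^{2j}\bigr)^{1/2}\|s\|$. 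This is bounded by $(q+1)(3+2\sqrt2)^q$ after a crude count, since $\sum_{j\le q}(2j+1)\le(q+1)^2$, and I would check that the constants close.

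For the lower bound I will choose the test polynomial $s=P_q$. Its norm on $(-1,1)$ is $\sqrt{2/(2q+1)}$. On $[1,3]$ the Laplace–Heine asymptotics give $P_q(x)\gtrsim q^{-1/2}(x+\sqrt{x^2-1})^q$ uniformly for $x\in[2,3]$, so integrating near $x=3$ over a window of width $\sim 1/q$ loses further powers of $q$. To obtain the exact $q^{-1/2}$ rate, I will instead take the extremal choice $s=\sum_j L_j(3)L_j$, the reproducing kernel at $3$. Then $\|s\|^2=\sum_jL_j(3)^2$, which is comparable to the top term $\tfrac{2q+1}{2}P_q(3)^2$, itself $\sim (3+2\sqrt2)^{2q}$ because $P_q(3)\sim q^{-1/2}(3+2\sqrt2)^q$. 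The value $s(3)=\|s\|^2$ is large, and $|s'|\lesssim q\|s\|^2$ near $3$ gives $\|s\|_{L^2(1,3)}^2\gtrsim q^{-1}s(3)^2$. Hence $C_E\gtrsim q^{-1/2}\|s\|\sim q^{-1/2}(3+2\sqrt2)^q$. Controlling this derivative and the Legendre asymptotics with uniform constants is the step I expect to be the main obstacle.
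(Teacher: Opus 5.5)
Your derivation of~\eqref{eq:transfer} (freezing the tangential variables, splitting off the polynomial extension, Taylor-expanding the difference, Cauchy--Schwarz over the $q+1$ orders) is the paper's argument, and your treatment of $C_E(0)=1$ is correct.

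Your upper bound in~\eqref{eq:extrap} is also correct, but it takes a different route. The paper combines the Chebyshev comparison principle $\|s\|_{L^\infty(1,3)}\le T_q(3)\|s\|_{L^\infty(-1,1)}$ with the Nikolskii inequality on $(-1,1)$. You instead bound the reproducing kernel $\sum_{j\le q}L_j(x)^2$ directly at the points $x\in[1,3]$, using monotonicity of $P_j$ on $[1,\infty)$ and the Laplace integral. The constants close exactly, since $\sum_{j\le q}(2j+1)=(q+1)^2$. The paper's version rests on two classical, citeable inequalities. Yours is self-contained, and before the crude count it even gives a bound of order $\sqrt q\,(3+2\sqrt2)^q$, because the sum is dominated geometrically by its top term.

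The lower bound, however, is not established. Everything hinges on showing that the test polynomial stays comparable to its value at $x=3$ on a window of length of order $1/q$, and you only assert $|s'|\lesssim q\,s(3)$ there. The obvious tool does not deliver this. Markov's inequality on $[1,3]$ gives only $|s'|\le q^2\max_{[1,3]}|s|$, hence a window of length $q^{-2}$ and merely $C_E(q)\gtrsim q^{-1}(3+2\sqrt2)^q$. The growth $P_q(3)\sim q^{-1/2}(3+2\sqrt2)^q$ is likewise quoted rather than proved. Your reason for abandoning $s=P_q$ also rests on a miscount. It is true that $\int_1^3P_q^2$ is of order $q^{-2}(3+2\sqrt2)^{2q}$. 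But $\|P_q\|^2_{L^2(-1,1)}=2/(2q+1)$ gains back one factor of $q$, so $P_q$ already yields exactly the rate $q^{-1/2}$, and the kernel detour is unnecessary.

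Either of two repairs closes the gap.

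\emph{Chebyshev route (the paper's).} Take $s=T_q$. With $t=\cosh u$ and $a=\operatorname{arccosh}3>1$, the identity $T_q(\cosh u)=\cosh(qu)$ makes $\int_1^3T_q^2=\int_0^a\cosh^2(qu)\sinh u\,du$ explicit. Restricting to $u\in[a-1/q,a]$ costs only the factors $\sinh(a-1)$ and $\tfrac14e^{-2}$, and $\|T_q\|^2_{L^2(-1,1)}\le2$, so no asymptotics enter.

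\emph{Legendre route (closest to yours).} The identity $(x^2-1)P_j'(x)=j\bigl(xP_j(x)-P_{j-1}(x)\bigr)$, together with $P_{j-1}>0$ on $[1,\infty)$, gives $0\le P_j'\le\frac{jx}{x^2-1}P_j\le\frac{2j}{3}P_j$ on $[2,3]$. Hence $P_q(x)\ge P_q(3)\,e^{-2q(3-x)/3}$ there; the same holds for your kernel, whose coefficients $L_j(3)$ are positive. This is the missing window of length of order $1/q$. Restricting the Laplace integral to $\phi\in[0,q^{-1/2}]$ and applying Bernoulli's inequality gives $P_q(3)\ge\pi^{-1}(1-\beta)\,q^{-1/2}(3+2\sqrt2)^q$ with $\beta:=3\sqrt2-4$. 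Combining the two completes your argument.
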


\begin{proof}
    % Prove the transfer estimate through the Taylor expansion across the face.
    \emph{Step 1 (transfer across the face).}
    Let $x_n$ be the coordinate normal to $F$, vanishing on $F$ and positive
    in $\cell$, and let $\tilde w$ be the polynomial extension of
    $w|_{\cell'}$ to $\cell$. The difference $d := w|_{\cell} - \tilde w$
    is a polynomial of degree $\le q$ in $x_n$, hence equals its Taylor
    expansion at $x_n=0$:
    $d(x_\tau,x_n) = \sum_{k=0}^{q} \frac{x_n^k}{k!}\, \partial_n^k
        d(x_\tau,0)$, where $\partial_n^k d(\cdot,0)$ agrees, up to the fixed
    jump convention, with $\llbracket\partial_n^k w\rrbracket_F$.
    Termwise, $\bigl\| \tfrac{x_n^k}{k!}\,
        \llbracket\partial_n^k w\rrbracket \bigr\|^2_{0,\cell}
        = \tfrac{h_\ell^2\,h_\ell^{2k-1}}{(2k+1)(k!)^2}
        \|\llbracket\partial_n^k w\rrbracket\|^2_{0,F}$, and Cauchy--Schwarz
    over the $q+1$ terms gives $\|d\|^2_{0,\cell} \le
        (q+1)\,h_\ell^2\sum_k \tfrac{h_\ell^{2k-1}}{(k!)^2}
        \|\llbracket\partial_n^k
        w\rrbracket\|^2_F$; then $\|w\|^2_{0,\cell}\le 2\|\tilde
        w\|^2_{0,\cell}+2\|d\|^2_{0,\cell}$ gives~\eqref{eq:transfer}.

    % Establish the exponential size of the extrapolation constant.
    \emph{Step 2 (extrapolation constant).}
    The identity $C_E(0)=1$ follows from the equality of the interval lengths.
    For $q\ge1$, fix a tangential point
    $x_\tau$: the line restriction $w(x_\tau,\cdot)|_{\cell'}$ is a
    univariate polynomial $s$ of degree $\le q$; map the normal extent of
    $\cell'$ to $(-1,1)$, so that $\cell$ maps to $(1,3)$. The Chebyshev
    comparison principle gives $\|s\|_{L^\infty(1,3)}\le
        T_q(3)\,\|s\|_{L^\infty(-1,1)}$ with $T_q$ the Chebyshev polynomial,
    $T_q(3)=\cosh(q\operatorname{arccosh}3)\le(3+2\sqrt2)^q$, and the
    Nikolskii inequality (reproducing kernel of degree $q$) gives
    $\|s\|_{L^\infty(-1,1)}\le\tfrac{q+1}{\sqrt2}\,\|s\|_{L^2(-1,1)}$;
    scaling back and integrating over $x_\tau$ yields $C_E(q)\le
        (q+1)\,T_q(3)$. For the lower bound, take $s=T_q$ and set
    $a:=\operatorname{arccosh}3$. Since $T_q(\cosh u)=\cosh(qu)$ and
    $a>1$, the substitution $t=\cosh u$ gives
    \begin{gather*}
        \int_1^3 T_q(t)^2\,dt
        =\int_0^a \cosh^2(qu)\sinh u\,du
        \ge \int_{a-1/q}^{a}\cosh^2(qu)\sinh u\,du
        \ge \frac{c}{q}\cosh^2(qa)
        =\frac{c}{q}T_q(3)^2,
    \end{gather*}
    because on $[a-1/q,a]$ one has
    $\sinh u\ge\sinh(a-1)$ and
    $\cosh(qu)\ge\cosh(qa-1)\ge\tfrac12e^{-1}\cosh(qa)$.
    Thus $c>0$ is independent of $q$. Together with
    $\|T_q\|^2_{L^2(-1,1)}\le2$ and
    $T_q(3)=\tfrac12\bigl((3+2\sqrt2)^q+(3+2\sqrt2)^{-q}\bigr)$, this proves
    the lower bound in~\eqref{eq:extrap}. Thus extrapolation is genuinely
    exponential in the degree: the growth in~\eqref{eq:extrap} is intrinsic,
    not an artifact of the estimate.
\end{proof}

\begin{lemma}[geometric consequences of the resolution
        condition]\label{lem:geom}
    Under Assumption~\ref{ass:geometry}, on every level satisfying the
    resolution condition~\eqref{eq:resolution}:
    \begin{itemize}
        \item[(i)] every $\cell\in\mesh_{\ell,\Omega}$ is connected to an uncut
              cell $I(\cell)\in\mesh_{\ell,\Omega}$, $I(\cell)\subset\Omega$, by a
              chain of at most $C_d$ cells of $\mesh_{\ell,\Omega}$ in which
              consecutive cells share a face, and every such face adjacent to a
              cut cell belongs to $\mathbb F_G$; moreover
              $\operatorname{dist}(\cell,I(\cell))\le 4\sqrt d\, h_\ell$, so each
              cell serves in at most $C_d$ chains;
        \item[(ii)] every node $z$ of $\mesh_{\ell-1,\Omega}$ admits a fine cell
              $K_z\in\mesh_{\ell,\Omega}$ with $K_z\subset\Omega$ and
              $\operatorname{dist}(z,K_z)\le 4\sqrt d\, h_{\ell-1}$;
        \item[(iii)] for every $\cell'\in\mesh_{\ell-1,\Omega}$ there is an
              open, connected neighbourhood $U_{\cell'}\subset\Omega$ that
              contains $\cell'\cap\Omega$, contains $K_z$ for every node $z$
              of $\cell'$, and contains $I(K)$ for every fine cell
              $K\subset\cell'$. There are constants
              $r_*,L_*,M_*,D_*>0$, depending only on $d$ and the constants of
              Assumption~\ref{ass:geometry}, such that the rescaled set
              $h_{\ell-1}^{-1}U_{\cell'}$ has diameter at most $D_*$ and admits
              a boundary atlas by Lipschitz graph charts of radius $r_*$, graph
              slope at most $L_*$, and overlap multiplicity at most $M_*$.
              Moreover, every point of $\Omega$ belongs to at most $M_*$ of
              the sets $U_{\cell'}$. In particular, these constants are
              independent of $\ell$, of $\cell'$, and of the cut configuration;
        \item[(iv)] if moreover $\Gamma\cap\cell'\ne\emptyset$, then
              there is a patch
              $\Gamma_{\cell'}\subset\Gamma\cap\partial U_{\cell'}$ with
              $|\Gamma_{\cell'}| \ge c\, h_{\ell-1}^{d-1}$, where $c>0$
              depends only on $d$, and the family of these patches has overlap
              multiplicity at most $M_*$.
    \end{itemize}
\end{lemma}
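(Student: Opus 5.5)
The plan is to work entirely with the normal structure of $\Gamma$ in the tube $U_{\delta_0}$. Let $\pi$ be the closest-point projection and $\rho$ the signed distance, negative in $\Omega$; both are well defined for $|\rho|<\delta_0$. Every active cell $\cell$ meets $\Omega$ and has diameter $\sqrt d\,h_\ell$. So either $\cell\subset\Omega$ already, or $\cell$ is cut and lies within distance $\sqrt d\,h_\ell$ of $\Gamma$. For a cut cell, take a point $y\in\Gamma\cap\cell$ and march inward along the normal to the point $x=y-t\,n(y)$ with $t=3\sqrt d\,h_\ell$. Because $t<\delta_0$ by~\eqref{eq:resolution}, the ball $B(x,t)$ lies in $\Omega$: this is the interior ball property that positive reach gives. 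The cell of $\mesh_\ell$ containing $x$ has diameter $\sqrt d\,h_\ell<t$, so it lies in $\Omega$. We take it as $I(\cell)$, and then $\operatorname{dist}(\cell,I(\cell))\le t+\sqrt d\,h_\ell\le 4\sqrt d\,h_\ell$.

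For the chain in (i), we walk from $\cell$ to $I(\cell)$ through face-neighbours along a monotone staircase in the coordinate directions. Each intermediate cell meets the segment from $\cell$ to $x$, or its $\sqrt d\,h_\ell$-neighbourhood. That segment lies in $\overline\Omega$ except for its initial part inside $\cell$, so every traversed cell meets $\Omega$ and is active. The chain length is bounded by $C_d=O(d\cdot 4\sqrt d)$ steps. A face adjacent to a cut cell between two active cells belongs to $\mathbb F_G$ by the definition of the ghost face set. The bounded-multiplicity claim follows by volume counting: all chains through a given cell start within distance $O(\sqrt d\,h_\ell)$ of it.

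Part (ii) is the same argument applied at the coarse node $z$. Since $z$ lies in an active coarse cell, $\operatorname{dist}(z,\Omega)\le\sqrt d\,h_{\ell-1}$. Marching inward by $3\sqrt d\,h_{\ell-1}$ gives a ball in $\Omega$ of radius larger than the fine cell diameter, and a fine cell $K_z$ inside that ball satisfies the stated bound.

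For (iii), we define $U_{\cell'}$ as the union of the enlarged box $\Omega\cap B_\infty(\cell',Ch_{\ell-1})$ and the open balls and segments used above. Then $U_{\cell'}$ contains $\cell'\cap\Omega$, every $K_z$ and every $I(K)$, and it is connected because each piece is joined to the inward-pushed ball. To obtain a clean boundary, I would instead take $U_{\cell'}:=\{x\in\Omega:\ |x-x_{\cell'}|_\infty<R h_{\ell-1}\}$ with $R\approx 6\sqrt d$, where $x_{\cell'}$ is the centre of $\cell'$. This contains all the previous objects by the distance bounds. Since $Rh_{\ell-1}\ll\delta_0$ (this is exactly where the size $c_R=24d+10\sqrt d$ enters, with a factor-two margin), $\Gamma$ is a $C^2$ graph over its tangent plane in this box, with slope controlled by $\delta_0^{-1}Rh_{\ell-1}$. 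After rescaling by $h_{\ell-1}^{-1}$, $U_{\cell'}$ is therefore the intersection of a cube of side $2R$ with the subgraph of a function of small slope. Such a set is connected, has diameter $D_*=2R\sqrt d$, and is a Lipschitz domain: the corners where the graph meets the cube faces are transversal because the slope is small. This yields $r_*$, $L_*$ and $M_*$ depending only on $d$. Overlap multiplicity of the $U_{\cell'}$ follows because the boxes of side $2Rh_{\ell-1}$ centred on a grid of spacing $h_{\ell-1}$ overlap at most $(2R+1)^d$ times.

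For (iv), a cut $\cell'$ contains a point $y\in\Gamma$. The graph piece of $\Gamma$ over the tangent disc of radius $h_{\ell-1}$ at $y$ lies inside the box, so it belongs to $\Gamma\cap\partial U_{\cell'}$ and has measure at least $c\,h_{\ell-1}^{d-1}$. Its overlap is inherited from that of the boxes.

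The main obstacle is the Lipschitz-chart claim in (iii), namely the uniform transversality of $\Gamma$ with the box faces. I would first try to bound the tilt of the normal over the box by $Rh_{\ell-1}/\delta_0\le 1/4$. If the axis-aligned box causes degenerate corners, the fallback is to replace it with a cylinder aligned with $n(y)$.
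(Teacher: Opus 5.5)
Parts (i), (ii) and (iv) follow essentially the paper's route: the inward march along the normal, interior balls from positive reach, the tangent-disc graph patch, and packing. The core claim of (iii), however, is not established. For $U_{\cell'}=\{x\in\Omega:\ |x-x_{\cell'}|_\infty<RH\}$ you derive the uniform Lipschitz atlas, and connectedness, from the small slope of $\Gamma$. That slope is measured over $T_{x_0}\Gamma$. The faces of your box are coordinate hyperplanes and $n_0$ is arbitrary relative to the axes, so small slope says nothing about the angle at which $\Gamma$ meets a face, edge or corner of the box.

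Your reasoning applies verbatim to a flat $\Gamma$ crossing a box face at a tiny angle $\varepsilon$, with $\Omega$ on the side away from $x_{\cell'}$. There the intersection of $\Omega$ with the box is a wedge of opening $\varepsilon$. A slightly wavy $\Gamma$ of this kind even splits it into several slivers. Such configurations are excluded only because $\cell'$ is active, which forces $x_{\cell'}\in\Omega$ or $\operatorname{dist}(x_{\cell'},\Gamma)\le\tfrac12\sqrt d\,H$. A proof for the box would have to use this fact together with a quantitative curvature bound on the scale $RH$, for instance to show star-shapedness with respect to a ball of radius comparable to $RH$. Bounding the tilt of the normal by $RH/\delta_0$, as you propose, does not supply this.

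Your fallback is the paper's construction, and it removes the difficulty by design. If $\operatorname{dist}(x_{\cell'},\Gamma)>6\sqrt d\,H$, the paper takes the ball $B(x_{\cell'},\tfrac{11}{2}\sqrt d\,H)$, whose closure lies in $\Omega$. Otherwise it sets $x_0=\pi(x_{\cell'})$ and $R=16\sqrt d$, so that $RH\le\delta_0/2$. It writes $\Gamma$ in the cylinder $|t|,|s|<RH$ as a graph $s=\varphi(t)$ over $T_{x_0}\Gamma$, with $\varphi(0)=0$, $\|\nabla\varphi\|_\infty\le1$, and $\Omega$ below the graph. It then takes $U_{\cell'}=\{x_0+t+sn_0:\ |t|<RH,\ -RH<s<\varphi(t)\}$. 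This set has the required properties:
\begin{itemize}
\item the lateral side is orthogonal to $T_{x_0}\Gamma$, hence uniformly transversal to the graph;
\item the graph never reaches the bottom face, because $|\varphi(t)|\le|t|<RH$;
\item connectedness follows from the fibre structure;
\item every required cell lies in $\Omega$ within $11\sqrt d\,H<RH$ of $x_0$, hence in $U_{\cell'}$.
\end{itemize}
The re-centring at $x_0$ costs up to $6\sqrt d\,H$, which is why the paper needs $R=16\sqrt d$ rather than your $6\sqrt d$.

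A smaller point in (i): a monotone coordinate staircase whose cells merely meet the $\sqrt d\,h_\ell$-neighbourhood of the marching segment need not consist of active cells. Use instead, as the paper does, the cells crossed by a generically perturbed segment; each of them contains a point of $\Omega$.
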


\begin{proof}
    \emph{Part (i): the chain to an uncut cell.}

    % March inward from the cut cell along the inward normal.
    Pick $x\in \cell\cap\Omega$ and let $y := \pi(x) - 3\sqrt d\, h_\ell\, n(\pi(x))$, where $\pi$ is the closest-point
    projection and $n$ the outward normal (if $\operatorname{dist}(x,\Gamma)\ge 3\sqrt d\, h_\ell$ take $I(\cell)\ni x$
    directly). By the reach property the segment from $x$ to $y$ along $-n(\pi(x))$ lies in $\Omega$ and the distance to
    $\Gamma$ increases along it; its length is at most $3\sqrt d\, h_\ell \le \delta_0$.

    % The cells met by the segment form the chain, and it terminates at an uncut cell.
    The cells met by the segment form the chain (perturb $y$ generically so the segment crosses only faces); each contains
    points of $\Omega$, hence is active, and their number is bounded by $C_d$ since the segment has length $\le 3\sqrt
        d\,h_\ell$. The terminal cell contains $y$ with $\operatorname{dist}(y,\Gamma) = 3\sqrt d\,h_\ell$ exceeding the cell
    diameter, hence is uncut and contained in $\Omega$; stop the chain at the first such cell. Every face between
    consecutive active cells that is adjacent to a cut cell belongs to $\mathbb F_G$ by definition.

    \medskip
    \emph{Part (ii): a full fine cell near each coarse node.}

    % Reuse the fine-level march of part (i) to locate a full cell near the node.
    $z$ lies in the closure of some $\cell'\in\mesh_{\ell-1,\Omega}$;
    pick $x\in \cell'\cap\Omega$, apply the march of (i) at the fine level:
    the ball of radius $\sqrt d\, h_\ell$ around $y$ lies in $\Omega$ and
    contains a full fine cell $K_z$;
    $\operatorname{dist}(z,K_z) \le \sqrt d\,h_{\ell-1} + 4\sqrt d\, h_\ell
        \le 4\sqrt d\, h_{\ell-1}$ (using $h_\ell = h_{\ell-1}/2$).

    \medskip
    \emph{Parts (iii)--(iv): uniformly regular local neighbourhoods.}

    % Record how far the cells used by the local interpolant can lie from the coarse cell.
    Write $H:=h_{\ell-1}$ and let $x_{\cell'}$ be the centre of $\cell'$. Part (ii), the diameter of a fine cell, and
    $z\in\overline{\cell'}$ imply that every $K_z$ associated with a node of $\cell'$ lies in $B(x_{\cell'},5\sqrt d\,H)$.
    Likewise, part (i) and $h_\ell=H/2$ imply that every $I(K)$ associated with a fine cell $K\subset\cell'$ lies in
    $B(x_{\cell'},3\sqrt d\,H)$.

    % Use a ball when the coarse cell is separated from the interface.
    If $\operatorname{dist}(x_{\cell'},\Gamma)>6\sqrt d\,H$, then $x_{\cell'}\in\Omega$ because $\cell'$ is active, and we
    set
    \begin{gather*}
        U_{\cell'}:=B(x_{\cell'},\tfrac{11}{2}\sqrt d\,H).
    \end{gather*}
    Its closure lies in $\Omega$, it contains $\cell'$, whose half-diagonal
    is $\tfrac12\sqrt d\,H$, and all the cells identified in the preceding
    paragraph, and its rescaling has a Lipschitz character depending only on
    $d$.

    % Use a tangent-aligned cylinder when the coarse cell is near the interface.
    Otherwise let $x_0:=\pi(x_{\cell'})\in\Gamma$, put $n_0:=n(x_0)$, and decompose $x-x_0=t+s n_0$ with $t\in
        T_{x_0}\Gamma$. Set $R:=16\sqrt d$. Since $RH\le\delta_0/2$ by~\eqref{eq:resolution}, the reach property represents
    $\Gamma$ in the cylinder $|t|<RH$, $|s|<RH$ as a graph $s=\varphi(t)$ with $\varphi(0)=0$ and
    $\|\nabla\varphi\|_\infty\le1$; after orienting $n_0$, the domain lies below this graph. Define
    \begin{gather*}
        U_{\cell'}:=
        \{x_0+t+s n_0:\ |t|<RH,\ -RH<s<\varphi(t)\}.
    \end{gather*}
    Every cell identified in the first paragraph lies within
    $11\sqrt d\,H<RH$ of $x_0$ and is contained in $\Omega$, hence belongs
    to $U_{\cell'}$. The same applies to $\cell'\cap\Omega$, which lies
    within $7\sqrt d\,H$ of $x_0$.

    % Verify the uniform atlas, diameter, and overlap properties.
    The lower face, cylindrical side, graph of $\varphi$, and their intersections form a Lipschitz atlas with chart radius
    comparable to $H$ and uniformly bounded slope: the cylindrical side is transverse to the graph because the latter is
    written over $T_{x_0}\Gamma$. After rescaling, the chart radius is bounded below and the diameter and slopes are
    bounded above by constants depending only on $d$ and the reach bounds. Furthermore, $U_{\cell'}\subset
        B(x_{\cell'},(R+6\sqrt d)H)$ in both cases. Since the coarse-cell centres are separated by $H$, a packing argument
    gives the asserted overlap bound $M_*$.

    % Select an interface patch with uniformly positive surface measure.
    If $\Gamma\cap\cell'\ne\emptyset$, the near-interface construction applies. The graph patch
    \begin{gather*}
        \Gamma_{\cell'}:=
        \{x_0+t+\varphi(t)n_0:\ |t|<H\}
        \subset\Gamma\cap\partial U_{\cell'}
    \end{gather*}
    has surface measure at least that of its projection onto
    $T_{x_0}\Gamma$, namely
    $|\Gamma_{\cell'}|\ge\omega_{d-1}H^{d-1}$. The same packing argument
    used above bounds the overlap of these patches, which proves (iv).
\end{proof}

% State the functional consequences of the uniformly regular neighbourhoods.
Lemma~\ref{lem:geom}(iii) provides two functional inequalities on the neighbourhoods $U_{\cell'}$: a Poincar\'e
inequality for the local stability and approximation estimates of the coarse interpolant, and a trace inequality for
the interface term of the two-level bound. Their constants depend on the cut only through the uniformly bounded
Lipschitz character:

\begin{corollary}[uniform Poincar\'e and trace inequalities on the
        neighbourhoods]\label{cor:poincare}
    Let the assumptions of Lemma~\ref{lem:geom} hold, write $U := U_{\cell'}$
    and $H := h_{\ell-1}$ for $\cell'\in\mesh_{\ell-1,\Omega}$. Then for all
    $v\in H^1(U)$,
    \begin{gather}\label{eq:poincare-U}
        \|v-\bar v_{U}\|_{0,U} \le C_P\, H\, |v|_{1,U},
        \qquad \bar v_U := |U|^{-1}\!\int_U v,
    \end{gather}
    and
    \begin{gather}\label{eq:trace-U}
        \|v\|^2_{0,\partial U}
        \le C_T\bigl(H^{-1}\|v\|^2_{0,U} + H\,|v|^2_{1,U}\bigr),
    \end{gather}
    with $C_P,C_T$ depending only on $d$ and the constants of
    Assumption~\ref{ass:geometry} --- in particular independent of $\ell$, of
    $\cell'$, and of the cut configuration.
\end{corollary}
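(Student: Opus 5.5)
The plan is to rescale to unit size and then invoke the standard Poincaré and trace inequalities on domains of uniformly bounded Lipschitz character. Set $\hat U := H^{-1}U$ and $\hat v(\hat x) := v(H\hat x)$. Both~\eqref{eq:poincare-U} and~\eqref{eq:trace-U} are scale-covariant. The $L^2(U)$ norm scales as $H^{d/2}$, the $H^1$-seminorm as $H^{d/2-1}$, and the $L^2(\partial U)$ norm as $H^{(d-1)/2}$. Hence it suffices to prove $\|\hat v-\bar{\hat v}\|_{0,\hat U}\le C_P|\hat v|_{1,\hat U}$ and $\|\hat v\|^2_{0,\partial\hat U}\le C_T\|\hat v\|^2_{1,\hat U}$ with constants uniform over the family $\mathcal U$ of all rescaled sets $h_{\ell-1}^{-1}U_{\cell'}$. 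By Lemma~\ref{lem:geom}(iii), every member of $\mathcal U$ is open and connected, has diameter at most $D_*$, and admits a Lipschitz atlas with chart radius $r_*$, slope at most $L_*$ and overlap at most $M_*$.

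For the trace inequality I would work chart by chart. In each boundary chart, $\hat U$ is locally the subgraph of a Lipschitz function with slope at most $L_*$ over a ball of radius $r_*$. The one-dimensional fundamental theorem of calculus in the vertical direction, applied against a cutoff of height comparable to $r_*$, gives $\|\hat v\|^2_{0,\partial\hat U\cap\text{chart}}\le C(L_*,r_*)\|\hat v\|^2_{1,\hat U\cap\text{chart}}$; the surface element contributes the factor $\sqrt{1+L_*^2}$. Summing over the charts, whose multiplicity is at most $M_*$, yields~\eqref{eq:trace-U}. The number of charts needed to cover $\partial\hat U$ is not required here, only the overlap bound.

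For the Poincaré inequality I would avoid compactness, because the class $\mathcal U$ depends on the level-set geometry and may not be closed in any convenient topology. Instead I would use an explicit route. A bounded domain with uniform Lipschitz character, i.e.\ an $(\varepsilon,\delta)$ or uniform cone-condition domain with parameters controlled by $r_*,L_*$, admits a Sobolev extension operator $E:H^1(\hat U)\to H^1(B)$ into a ball $B$ of radius comparable to $D_*$. By Jones/Stein, $\|E\|$ depends only on the Lipschitz character and the number of charts. The chart count is bounded by a packing argument: $\operatorname{diam}\hat U\le D_*$ and the charts have radius $r_*$.

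The main obstacle is that the extension bound controls the full $H^1$ norm, not the seminorm. I would handle it by applying $E$ to $\hat v-\bar{\hat v}$. Let $c$ be the mean of $E(\hat v-\bar{\hat v})$ on $B$, and write $g:=\hat v-\bar{\hat v}-c$ on $\hat U$. The Poincaré inequality on $B$ then gives $\|g\|_{0,\hat U}\le C_B|E(\hat v-\bar{\hat v})|_{1,B}$, which is still a full-norm bound. To close the argument I would instead use the explicit connectivity given by the construction in Lemma~\ref{lem:geom}(iii). In the ball case there is nothing to prove. In the cylinder case $\{|t|<R,\ -R<s<\varphi(t)\}$ with $|\nabla\varphi|\le1$, I would use the bi-Lipschitz flattening map $(t,s)\mapsto(t,s-\varphi(t))$ onto a fixed box $\{|t|<R,\ -R-\varphi(t)<\tilde s<0\}$, whose height lies between $R-2R$ and $R+R$. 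A further vertical affine rescaling maps this region onto the fixed cylinder $\{|t|<R,\,-1<\sigma<0\}$ with Jacobian bounds controlled by $\|\nabla\varphi\|_\infty\le 1$. Since the distortion factors multiply the seminorm and the $L^2$ norm up to constants depending on $d$ only, the Poincaré constant of the fixed cylinder transfers to $U$. This avoids the abstract extension step altogether; the only delicate point is to check that $R+\varphi(t)\ge R-|t|\ge$ a fixed positive fraction of $R$ on the chosen cylinder, which may require shrinking the cylinder radius slightly relative to its depth.
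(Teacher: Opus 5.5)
Your rescaling and your trace argument are fine. The chart-wise fundamental theorem of calculus against a vertical cutoff is the localized form of the paper's proof. The paper glues the chart directions into one Lipschitz field $\xi$ with $\xi\cdot n\ge c>0$ on $\partial\hat U$ and applies the divergence theorem to $\hat v^2\xi$; both versions need only $(r_*,L_*,M_*)$.

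The gap is in the Poincar\'e step, exactly at the point you call delicate. Your vertical rescaling $\sigma=\tilde s/(R+\varphi(t))$ has $\partial_{\tilde s}\sigma=(R+\varphi(t))^{-1}$ and $|\nabla_t\sigma|\le|\nabla\varphi(t)|/(R+\varphi(t))$. Its Jacobian determinant $(R+\varphi(t))^{-1}$ enters the transfer of the seminorm, so the argument needs $\inf_{|t|<R}(R+\varphi(t))\ge cR$. From $\varphi(0)=0$ and $\|\nabla\varphi\|_\infty\le1$ you only get $R-|t|<R+\varphi(t)<R+|t|$, which is the correct form of your ``between $R-2R$ and $R+R$''. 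Moreover, $R-|t|\to0$ at the lateral wall. The cylinder of Lemma~\ref{lem:geom}(iii) has radius equal to its depth, so nothing you invoke excludes a slope-one graph bending down towards the wall. In that case the rescaling is not uniformly bi-Lipschitz. So the claim that its Jacobian is ``controlled by $\|\nabla\varphi\|_\infty\le1$'' is false, and the inequality $R-|t|\ge cR$ you propose to check fails on $|t|<R$. Shrinking the radius does not prove the statement as given. The corollary concerns the specific set $U$ of Lemma~\ref{lem:geom}(iii), whose containment properties are used later in Proposition~\ref{prop:quasi} and Theorem~\ref{thm:additive}.

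The step can be closed in three ways.
\begin{itemize}
\item[(a)] Use the $C^2$ and reach hypotheses, which your argument never touches. Since $\varphi$ is a graph over $T_{x_0}\Gamma$, $\nabla\varphi(0)=0$. In rescaled units the principal curvatures are at most $H/\delta_0\le1/c_R$ by~\eqref{eq:resolution}. With the slope bound this gives $|D^2\varphi|\le2\sqrt2/c_R$, hence $|\varphi(t)|\le\sqrt2\,R^2/c_R$. For $R=16\sqrt d$ and $c_R=24d+10\sqrt d$ this keeps $R+\varphi(t)\ge R/3$.
\item[(b)] Redefine the cylinder with radius strictly smaller than its depth, e.g.\ radius $12\sqrt d\,H$ and depth $16\sqrt d\,H$, and re-verify Lemma~\ref{lem:geom}(iii)--(iv). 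This is easy because every cell that must be contained lies within $11\sqrt d\,H$ of $x_0$, but it is a modification of the lemma.
\item[(c)] Do what the paper does and never open the construction. A connected set of diameter at most $D_*$ with Lipschitz character $(r_*,L_*)$ is a John domain with controlled John constant, and Bojarski's theorem then gives a uniform Poincar\'e constant.
\end{itemize}
Route (c) needs no case distinction and survives any change in the construction of $U$. Your route, once repaired by (a) or (b), is more elementary and yields an explicit constant.
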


\begin{proof}
    % Reduce both estimates to a uniformly controlled family of unit-scale domains.
    Both inequalities are invariant statements about the rescaled domain $\hat U := H^{-1}U$, which by
    Lemma~\ref{lem:geom}(iii) is a bounded connected Lipschitz domain with diameter and boundary-atlas constants bounded
    uniformly. Substituting $\hat v(\hat x):=v(H\hat x)$ turns~\eqref{eq:poincare-U} and~\eqref{eq:trace-U} into the
    corresponding inequalities on $\hat U$ with $H=1$, since each side scales with the same power of $H$.

    % Apply the Poincare inequality uniformly on the rescaled domains.
    For~\eqref{eq:poincare-U}: a bounded Lipschitz domain with character $(r,L)$ and diameter $\le D$ is a John domain with
    John constant depending only on $(r,L,D,d)$, and on John domains the Poincar\'e inequality holds with a constant
    depending only on that data \cite{Bojarski88}. Lemma~\ref{lem:geom}(iii) therefore gives the same Poincar\'e constant
    for every admissible $\hat U$.

    % Construct a uniformly transverse vector field from the boundary atlas.
    For~\eqref{eq:trace-U} we use the divergence-theorem argument, which keeps the dependence explicit. In each boundary
    chart choose the constant unit vector $\xi_j$ transverse to its graph and oriented outward. The slope bound gives
    $\xi_j\cdot n\ge c(L_*)>0$ almost everywhere on the part of the boundary covered by that chart. A Lipschitz partition
    of unity $\{\eta_j\}_j$ subordinate to the atlas can be chosen on a boundary neighbourhood with
    $\|\nabla\eta_j\|_\infty\le C(r_*)$; the overlap bound $M_*$ makes this estimate uniform. Extending the partition into
    $\hat U$ and setting $\xi:=\sum_j\eta_j\xi_j$ supplies a vector field $\xi\in C^{0,1}(\overline{\hat U};\mathbb R^d)$
    with $\|\xi\|_{\infty}+\|\nabla\xi\|_{\infty}\le C$ and $\xi\cdot n \ge c>0$ almost everywhere on $\partial\hat U$,
    with $C,c$ determined only by $(r_*,L_*,M_*,d)$.

    % Use the divergence theorem and density to prove the unit-scale trace estimate.
    By density it suffices to take $\hat v\in C^\infty(\overline{\hat U})$; then
    \begin{gather*}
        c\,\|\hat v\|^2_{0,\partial\hat U}
        \le \int_{\partial\hat U}\hat v^2\,\xi\cdot n
        = \int_{\hat U}\operatorname{div}(\hat v^2\xi)
        \le C\bigl(\|\hat v\|^2_{0,\hat U}
        + 2\|\hat v\|_{0,\hat U}|\hat v|_{1,\hat U}\bigr),
    \end{gather*}
    and Young's inequality gives~\eqref{eq:trace-U} on $\hat U$.

    % Rescale the unit-scale estimates to the physical neighbourhood.
    Since volume, surface, and $H^1$-seminorm squares scale as $H^d$, $H^{d-1}$, and $H^{d-2}$, respectively, rescaling
    gives \eqref{eq:poincare-U} and~\eqref{eq:trace-U} with constants independent of $\ell$, $\cell'$, and the cut
    configuration.
\end{proof}

\begin{proposition}[verification of Assumption~\ref{ass:cutfem}, with
        explicit $p$-dependence]\label{prop:cutfem}
    Let Assumption~\ref{ass:geometry} hold and let the ghost penalty
    $g_\ell$ carry the standard weights $h_F^{2k-1}/(k!)^2$ scaled by the
    global weight $\gamma\in(0,1]$. Define the
    \emph{chain amplification}
    \begin{gather}\label{eq:Xi}
        \Xi_p := \bigl(2\,C_E(p)^2\bigr)^{C_d}
        \;\le\; \bigl(2(p+1)^2\bigr)^{C_d}\,(3+2\sqrt2)^{\,2C_d\,p},
    \end{gather}
    the $C_d$-fold iterate of the extrapolation
    constant~\eqref{eq:extrap} along the chains of
    Lemma~\ref{lem:geom}(i). Then, with constants $C$ depending only on $d$
    and the constants of Assumption~\ref{ass:geometry}:
    \begin{itemize}
        \item[(i)] the extension property~\eqref{eq:ghost-extension} holds
              with constant
              \begin{gather}\label{eq:Cext}
                  C_{\mathrm{ext}}(p) \;\le\; C\,\Xi_p\,
                  \bigl(1 + (p+1)\,C_M^2p^4\,\gamma^{-1}\bigr);
              \end{gather}
        \item[(ii)] the Nitsche flux is controlled: for all $v\in V_\ell$,
              \begin{gather*}
                  h_\ell\,\|\partial_n v\|^2_{0,\Gamma}
                  \le C_N(p)\,\bigl(|v|^2_{1,\Omega}+g_\ell(v,v)\bigr),
                  \qquad
                  C_N(p)\le C\,(1+C_M^2p^4)\,C_{\mathrm{ext}}(p);
              \end{gather*}
        \item[(iii)] for $\gamma_D\ge\gamma^*(p):=2\,C_N(p)+1$ the coercivity
              and boundedness~\eqref{eq:coercive} hold with
              \begin{gather*}
                  c_A(p)^{-1}\le C\,C_N(p),
                  \qquad
                  C_A(p)\le C\,(1+\gamma_D),
              \end{gather*}
    \end{itemize}
    all uniform in $\ell$ and the cut configuration. Every $p$-dependence
    above is polynomial --- of the trace/Markov type $C_M^2p^4$, or the
    Cauchy--Schwarz factor $(p+1)\,\gamma^{-1}$ --- except the single
    factor $\Xi_p$. Remark~\ref{rem:where-exponential} identifies its
    occurrence in the estimates and its contribution to the bounds.
\end{proposition}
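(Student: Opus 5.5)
For (i) I would build the extension estimate cell by cell along the chains of Lemma~\ref{lem:geom}(i). The function to transfer is $w=\nabla v$, componentwise. Each component is piecewise $\Q_p$ of degree $\le p$ in every normal coordinate, so Lemma~\ref{lem:transfer} applies with $q=p$. Fix $\cell\in\mesh_{\ell,\Omega}$ with chain $\cell=\cell_0,\dots,\cell_m=I(\cell)$, where $m\le C_d$. Iterating~\eqref{eq:transfer} from $I(\cell)$ back to $\cell$ gives
\[
\|\nabla v\|^2_{0,\cell}\le (2C_E(p)^2)^{m}\|\nabla v\|^2_{0,I(\cell)}+\sum_{j}(2C_E(p)^2)^{j}\,2(p+1)\,h_\ell^2\sum_{k=0}^p\frac{h_\ell^{2k-1}}{(k!)^2}\|\llbracket\partial_n^k\nabla v\rrbracket\|^2_{0,F_j}.
\]
This is where $\Xi_p$ appears.

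The jumps of $\nabla v$ have to be converted into ghost jumps of $v$. The normal component gives $\llbracket\partial_n^{k+1}v\rrbracket$ directly, with weight $h^{2k+1}/(k!)^2=(k+1)^2h^{2(k+1)-1}/((k+1)!)^2$. The factor $(k+1)^2\le(p+1)^2$ is harmless, and I will absorb it. The $k=p$ term, i.e.\ $\partial_n^{p+1}$, vanishes.

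The tangential components are the delicate part. Since $\llbracket\partial_n^kv\rrbracket$ is a polynomial on $F$, I would use $\partial_\tau\llbracket\partial_n^kv\rrbracket=\llbracket\partial_\tau\partial_n^kv\rrbracket$ together with a face Markov inequality, which costs $C_M^2p^4h^{-2}$ and pairs $h^{2}$ with the weight. The $k=0$ tangential jump vanishes by continuity.

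Multiplying by $\gamma$ then gives the factor $(p+1)C_M^2p^4\gamma^{-1}$ in front of $g_\ell$. Summing over cells of $\Omega_\ell\setminus\Omega$, the finite-overlap bound (each cell or face lies in at most $C_d$ chains) gives~\eqref{eq:Cext}. This uses $I(\cell)\subset\Omega$, so $\|\nabla v\|_{0,I(\cell)}\le|v|_{1,\Omega}$.

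For (ii) I would apply the cut trace inequality~\eqref{eq:cut-trace} on each cut cell to $w=\partial_nv$, with $n$ the (smooth) interface normal:
\[
h\|\nabla v\|^2_{0,\Gamma\cap\cell}\le C(\|\nabla v\|^2_{0,\cell}+h^2|\nabla v|^2_{1,\cell}).
\]
The second-derivative term is bounded by the Markov inequality~\eqref{eq:markov}, giving $(1+C_M^2p^4)\|\nabla v\|^2_{0,\cell}$. Summing over cut cells and invoking (i) gives $C_N(p)$.

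For (iii) I would follow the standard Nitsche argument. The consistency term $-2(\partial_nv,v)_\Gamma$ is bounded by $\epsilon h\|\partial_nv\|^2_\Gamma+\epsilon^{-1}h^{-1}\|v\|^2_\Gamma$. Part (ii) bounds the first piece, so that with $\epsilon=1/(2C_N)$ half of $|v|^2_{1,\Omega}+g$ survives. The second piece is absorbed because $\gamma_D\ge 2C_N+1$. Finally the $h\|\partial_nv\|^2_\Gamma$ component of $N_\ell$ is added back using (ii) once more, which gives $c_A^{-1}\lesssim C_N$. Boundedness follows from Cauchy--Schwarz term by term.

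I expect the main obstacle to be the tangential jump conversion in (i). Markov on $F$ must not introduce another $p$-dependent power per $k$, and the bookkeeping of the $(k+1)^2$ and Cauchy--Schwarz $(p+1)$ factors must land exactly at $(p+1)C_M^2p^4$. If the face Markov route fails, I would instead transfer $v$ itself using~\eqref{eq:transfer} and obtain the gradient by an inverse estimate, but that would cost an extra $p^4$.
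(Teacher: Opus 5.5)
Your proposal is correct and follows the paper's proof essentially step by step. It transfers $\nabla v$ componentwise along the chains of Lemma~\ref{lem:geom}(i), truncated at the first uncut cell, and converts the jumps into ghost terms by the normal index shift and a tangential face Markov inequality. The flux bound uses the cut-cell trace inequality plus Markov, and coercivity uses the standard Young absorption; your $\epsilon=1/(2C_N)$ is the paper's $\epsilon=2C_N$ written reciprocally.

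One note on your fallback: it needs the mean of $v$ over the terminal uncut cell subtracted before the Poincar\'e step. Once that is done it loses nothing in the final bound, since its extra $C_M^2p^4$ multiplies a term already dominated by $(p+1)C_M^2p^4\gamma^{-1}$ for $\gamma\le1$.
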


\begin{proof}
    \emph{Step 1 (control of derivative jumps).}
    Fix a partial derivative $\partial_i$ and a ghost face $F$ with normal
    direction $n$, and consider the transfer face sum of~\eqref{eq:transfer}
    for $w=\partial_i v$. If $\partial_i = \partial_n$, an index shift:
    with the $(k!)^{-2}$ weights,
    $h^2\sum_{k=0}^{p-1} \tfrac{h^{2k-1}}{(k!)^2}
        \|\llbracket\partial_n^k \partial_n v\rrbracket\|^2_F
        = \sum_{k'=1}^{p} k'^2\, \tfrac{h^{2k'-1}}{(k'!)^2}
        \|\llbracket\partial_n^{k'} v\rrbracket\|^2_F
        \le p^2\gamma^{-1}\, g_{\ell,F}(v,v)$,
    where $g_{\ell,F}$ is the contribution of $F$ to $g_\ell$. If
    $\partial_i$ is tangential to $F$, it commutes with the jump, and the
    face Markov inequality $\|\partial_i s\|_F \le C_M p^2 h^{-1}\|s\|_F$
    (the one-dimensional inequality~\eqref{eq:markov} applied per tangential
    line) makes the $h^2$ cancel the Markov cost:
    $h^2\, h^{2k-1}\|\llbracket\partial_n^k\partial_i
        v\rrbracket\|^2_F \le C_M^2 p^4
        h^{2k-1}\|\llbracket\partial_n^k v\rrbracket\|^2_F$, whence the bound
    $C_M^2p^4\gamma^{-1}\,g_{\ell,F}(v,v)$; the term $k=0$, for which
    $g_{\ell,F}$ offers no counterpart, vanishes outright, since
    $\llbracket\partial_i v\rrbracket=\partial_i\llbracket v\rrbracket=0$ by
    the continuity of $v\in V_\ell$ across interior faces. In both cases, including
    the Cauchy--Schwarz factor $2(p+1)$ of~\eqref{eq:transfer},
    \begin{gather*}
        2(p+1)\,h^2 \sum_{k=0}^{p} \frac{h^{2k-1}}{(k!)^2}
        \bigl\|\llbracket\partial_n^k\partial_i v\rrbracket\bigr\|^2_{0,F}
        \;\le\; c_F(p)\, g_{\ell,F}(v,v),
        \qquad
        c_F(p) := 2(p+1)\max\{p^2,C_M^2p^4\}\,\gamma^{-1}.
    \end{gather*}
    The factor $(p+1)\,\gamma^{-1}$ results from Cauchy--Schwarz over the
    $p+1$ Taylor orders and division by the global weight; its analogue for
    reweighted penalties appears in~\eqref{eq:profile-constants}. The factor
    $C_M^2p^4$ results from one face Markov inequality.

    \emph{Step 2 (extension).}
    Let $\cell$ be a cut cell and $\cell = K_0,\dots,K_M = I(\cell)$ its
    chain from Lemma~\ref{lem:geom}(i), $M\le C_d$. Set $a_j :=
        \|\partial_i v\|^2_{0,K_j}$. Lemma~\ref{lem:transfer} across the chain
    face $F_j$ (with $q\le p$) together with Step 1 gives $a_j \le
        2C_E(p)^2\, a_{j+1} + c_F(p)\, g_{\ell,F_j}(v,v)$, and iterating along
    the chain (using $2C_E(p)^2\ge1$),
    \begin{gather*}
        \|\partial_i v\|^2_{0,\cell}
        \le \Xi_p\Bigl( \|\partial_i v\|^2_{0,I(\cell)}
        + c_F(p) \sum_{F\in\text{chain}} g_{\ell,F}(v,v)\Bigr).
    \end{gather*}
    Every chain face adjacent to a cut cell belongs to $\mathbb F_G$; faces between two uncut cells occur only
    after the chain has entered $\Omega$, where the transfer is not needed
    (truncate the chain at the first uncut cell). Since $I(\cell)\subset
        \Omega$ and each cell and face serves in at most $C_d$ chains, summing
    over all cut cells and the $d$ partial derivatives gives
    $|v|^2_{1,\Omega_\ell\setminus\Omega} \le \sum_{\cell\ \text{cut}}
        |v|^2_{1,\cell} \le C_d\,\Xi_p\bigl(1+c_F(p)\bigr)\bigl( |v|^2_{1,\Omega}
        + g_\ell(v,v)\bigr)$,
    which is~\eqref{eq:ghost-extension} with the constant~\eqref{eq:Cext}.

    \emph{Step 3 (Nitsche control and coercivity).}
    By the cut-cell trace inequality~\eqref{eq:cut-trace} applied to
    $\partial_i v$ on each cut cell, followed by the Markov
    inequality~\eqref{eq:markov} (per coordinate) on the full cell,
    $h^2|\nabla v|^2_{1,\cell}\le d\,C_M^2p^4\,\|\nabla v\|^2_{0,\cell}$,
    \begin{gather*}
        h\,\|\partial_n v\|^2_{0,\Gamma}
        \le C \sum_{\cell\ \text{cut}} \bigl( \|\nabla v\|^2_{0,\cell}
        + h^2 |\nabla v|^2_{1,\cell}\bigr)
        \le C(1+C_M^2p^4) \sum_{\cell\ \text{cut}} \|\nabla v\|^2_{0,\cell}
        \le C_N(p)\bigl( |v|^2_{1,\Omega} + g_\ell(v,v)\bigr),
    \end{gather*}
    the last step by Step 2, since $\sum_{\cell\ \text{cut}}\|\nabla
        v\|^2_{0,\cell} \le |v|^2_{1,\Omega} +
        |v|^2_{1,\Omega_\ell\setminus\Omega}$; this is (ii) with $C_N(p)\le
        C(1+C_M^2p^4)\bigl(1+C_{\mathrm{ext}}(p)\bigr)$. Then for any
    $\epsilon>0$,
    \begin{gather*}
        A_\ell(v,v) \ge |v|^2_{1,\Omega}
        - \epsilon^{-1} h\|\partial_n v\|^2_{0,\Gamma}
        - \epsilon\, h^{-1}\|v\|^2_{0,\Gamma}
        + \gamma_D h^{-1}\|v\|^2_{0,\Gamma} + g_\ell(v,v),
    \end{gather*}
    and choosing $\epsilon = 2 C_N(p)$, $\gamma_D \ge \gamma^*(p) =
        2C_N(p) + 1$: the flux term is, by (ii), at most
    $\tfrac12(|v|^2_{1,\Omega}+g_\ell(v,v))$, so
    $A_\ell(v,v) \ge \tfrac12\bigl(|v|^2_{1,\Omega}+g_\ell(v,v)\bigr) +
        h^{-1}\|v\|^2_{0,\Gamma}$. Reinstating $h\|\partial_n
        v\|^2_{0,\Gamma}$ in the norm by one more application of (ii),
    $N_\ell(v)^2 \le (1+C_N(p))\bigl(|v|^2_{1,\Omega}+g_\ell(v,v)\bigr) +
        h^{-1}\|v\|^2_{0,\Gamma} \le \bigl(2C_N(p)+3\bigr)A_\ell(v,v)$,
    giving $c_A(p)^{-1}\le C\,C_N(p)$. Boundedness is Cauchy--Schwarz term
    by term: every term of $A_\ell$ pairs against the matching weights of
    $N_\ell$ with constant $O(1)$ except the Nitsche penalty, whose
    coefficient $\gamma_D$ exceeds the unit weight of
    $h^{-1}\|v\|^2_{0,\Gamma}$ in $N_\ell$, whence $C_A(p)\le
        C(1+\gamma_D)$.
\end{proof}

\begin{remark}[source of the non-polynomial $p$-dependence]
    \label{rem:where-exponential}
    % Classify the sources of degree dependence.
    Trace and Markov inequalities and the Cauchy--Schwarz factor $(p+1)\gamma^{-1}$ contribute polynomial factors. The
    chain amplification $\Xi_p$ is exponential in $p$ because each of at most $C_d$ transfer steps incurs the sharp
    extrapolation constant $C_E(p)\simeq(3+2\sqrt2)^p$; improving this factor therefore requires an argument that avoids
    cellwise extrapolation, and whether the true constant of~\eqref{eq:ghost-extension} is polynomial in $p$ is open. In
    the stable decomposition, $\Xi_p$ is dominated by the superexponential ghost-energy factor
    $\Theta'_p=(Cp)^{2p}e^{O(p)}$ from Lemma~\ref{lem:ghostchain}. For derivative-order-dependent weights, the
    corresponding factors $S(\gamma)$ and $B(\gamma)$ are considered in Remark~\ref{rem:duality}.
\end{remark}

\begin{remark}[admissible versus experimental Nitsche
        parameters]\label{rem:admissible-gammaD}
    % Separate the certified sufficient condition from the experimental choice.
    Throughout, a Nitsche parameter $\gamma_D$ is called \emph{admissible} when Assumption~\ref{ass:cutfem} holds for it.
    Proposition~\ref{prop:cutfem}(iii) certifies admissibility only under the sufficient condition
    $\gamma_D\ge\gamma^*(p)=2C_N(p)+1$, and this condition is pessimistic: $C_N(p)$ contains the chain amplification
    $\Xi_p$ and the factor $\gamma^{-1}$, so $\gamma^*(p)$ grows exponentially in $p$ and without bound as the global ghost
    weight decreases. The numerical experiments in this paper use the standard polynomial scaling $\gamma_D=5p(p+1)$, which
    the proposition does not certify, in particular not for the small weights $\gamma$ of Section~\ref{sec:cut-adaptive}.
    The convergence theory applies to every pair $(\gamma,\gamma_D)$ realizing Assumption~\ref{ass:cutfem}; runs outside
    the certified range are reported as measurements, not as instances of the theory.
\end{remark}

\begin{proposition}[shifted quasi-interpolation]\label{prop:quasi}
    Under Assumptions~\ref{ass:geometry} and~\ref{ass:cutfem} there is a
    linear operator $\Pi_{\ell-1} : V_\ell \to V_{\ell-1}$ such that for all
    $v\in V_\ell$,
    \begin{gather}
        \label{eq:Q1}
        |\Pi_{\ell-1} v|_{1,\Omega_{\ell-1}} \le C_\Pi(p)\, |v|_{1,\Omega},
        \\
        \label{eq:Q2}
        \|v - \Pi_{\ell-1} v\|_{0,\Omega_\ell}
        \le C_Q(p,\gamma)\, h_{\ell-1} \bigl( |v|^2_{1,\Omega} +
        g_\ell(v,v)\bigr)^{1/2}
        \le C_Q(p,\gamma)\, h_{\ell-1}\, N_\ell(v),
    \end{gather}
    with constants uniform in $\ell$ and the cut configuration, and, with
    $C$ depending only on $d$ and the constants of
    Assumption~\ref{ass:geometry},
    \begin{gather}\label{eq:CQ}
        C_\Pi(p) \le C\, C_M p^2\, Z_p,
        \qquad
        C_Q(p,\gamma)^2 \le C\,\bigl( Z_p^2
        + (p+1)\,\Xi_p\,(1+\gamma^{-1}) \bigr).
    \end{gather}
    Here $\Xi_p$ is the chain amplification of
    Proposition~\ref{prop:cutfem}, $\gamma$ the global ghost weight, and $Z_p
        \le (p+1)^d\,(2+32\sqrt d)^{\,dp}$ is the nodal evaluation constant
    of~\eqref{eq:nodal-bound} below --- the same Chebyshev--Nikolskii
    mechanism as $C_E(p)$ in~\eqref{eq:extrap}, over the node-to-cell
    distance instead of one cell width. The only new dependencies beyond
    Proposition~\ref{prop:cutfem} are thus one inverse inequality
    ($C_Mp^2$), one Cauchy--Schwarz factor ($(p+1)\,\gamma^{-1}$) and the
    evaluation constant $Z_p$.
\end{proposition}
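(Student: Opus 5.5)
The operator $\Pi_{\ell-1}$ will be a nodal, shifted Scott--Zhang-type interpolant built from the full fine cells of Lemma~\ref{lem:geom}(ii). For every Gauss--Lobatto node $z$ of $\mesh_{\ell-1,\Omega}$, I take the full fine cell $K_z\subset\Omega$ with $\operatorname{dist}(z,K_z)\le 4\sqrt d\,h_{\ell-1}$ and set
\[
(\Pi_{\ell-1}v)(z) := \tilde v_{K_z}(z),
\]
where $\tilde v_{K_z}\in\Q_p(\R^d)$ is the polynomial extension of $v|_{K_z}$. To make $\Pi_{\ell-1}v$ continuous, the choice of $K_z$ must depend only on $z$ and not on the coarse cell containing it; Lemma~\ref{lem:geom}(ii) supplies exactly this. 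Because extension preserves polynomials, $\Pi_{\ell-1}$ reproduces every $q\in\Q_p$ on a neighbourhood containing the relevant $K_z$. In particular it reproduces constants on the set $U_{\cell'}$ of Lemma~\ref{lem:geom}(iii), which contains all $K_z$ for the nodes $z$ of $\cell'$.

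The first key estimate is nodal evaluation. For $q\in\Q_p$ and $z$ at distance $\le 4\sqrt d\,h_{\ell-1}=8\sqrt d\,h_\ell$ from $K_z$, I claim
\[
|q(z)|\le Z_p\,h_\ell^{-d/2}\|q\|_{0,K_z}.
\]
This follows by tensorizing the one-dimensional Chebyshev comparison on a line, where the normalized distance $1+2\cdot 16\sqrt d$ gives the base $(2+32\sqrt d)$ per variable, together with the Nikolskii bound $(p+1)$ per variable, exactly as in Step 2 of Lemma~\ref{lem:transfer}. Combined with the one-signed Gauss--Lobatto quadrature, this yields nodal $L^2$-stability of the coarse Lagrange basis:
\[
\|\Pi_{\ell-1}v\|_{0,\cell'}\le C\,Z_p\Bigl(\sum_{z\in\cell'}\|v\|^2_{0,K_z}\Bigr)^{1/2}.
\]

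For \eqref{eq:Q1}, I use invariance under constants. Writing $c=\bar v_{U_{\cell'}}$, I have $\Pi_{\ell-1}v-c=\Pi_{\ell-1}(v-c)$ on $\cell'$. The coarse Markov inequality \eqref{eq:markov} gives the factor $C_Mp^2H^{-1}$, the local stability above gives $Z_p\|v-c\|_{0,U_{\cell'}}$, and Corollary~\ref{cor:poincare} gives $C_PH|v|_{1,U_{\cell'}}$. Summing over $\cell'$ with overlap $M_*$ gives $|\Pi_{\ell-1}v|_{1,\Omega_{\ell-1}}\le C\,C_Mp^2Z_p|v|_{1,\Omega}$, because $U_{\cell'}\subset\Omega$. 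This is the step that explains why only $|v|_{1,\Omega}$, and no ghost term, appears on the right of \eqref{eq:Q1}.

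For \eqref{eq:Q2}, I split cellwise on each fine cell $K\subset\cell'$:
\[
\|v-\Pi_{\ell-1}v\|_{0,K}\le\|v-c\|_{0,K}+\|\Pi_{\ell-1}(v-c)\|_{0,K}.
\]
The second term is bounded by $Z_pC_PH|v|_{1,U}$ as above. For the first term, when $K\subset\Omega$ Poincaré on $U$ suffices. When $K$ meets $\Omega_\ell\setminus\Omega$, I transport along the chain of Lemma~\ref{lem:geom}(i) to $I(K)\subset U_{\cell'}$. Applying Lemma~\ref{lem:transfer} to $w=v-c$, the $k=0$ jump term vanishes by continuity. The terms $k\ge1$ carry weights $h^2h^{2k-1}/(k!)^2$, i.e.\ $h^2$ times the ghost weights, so they are bounded by $2(p+1)h^2\gamma^{-1}g_{\ell,F}(v,v)$. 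Iterating over at most $C_d$ steps produces $\Xi_p$ and gives
\[
\|v-c\|^2_{0,K}\le\Xi_p\bigl(\|v-c\|^2_{0,I(K)}+(p+1)h^2\gamma^{-1}g\bigr).
\]
Poincaré then bounds $\|v-c\|_{0,I(K)}$, and summing with finite overlap yields $C_Q^2\le C\bigl(Z_p^2+(p+1)\Xi_p(1+\gamma^{-1})\bigr)$. The final inequality $\le C_Q\,h_{\ell-1}N_\ell(v)$ is immediate from \eqref{eq:triple-norm}.

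I expect the main obstacle to be the evaluation constant $Z_p$. Two points need care: that the node-to-cell distance, measured in fine cell widths, gives exactly the stated base, and that the choice of $K_z$ is consistent so the result is conforming. A secondary check is that every $K_z$ and every $I(K)$ actually lies in the single neighbourhood $U_{\cell'}$, which Lemma~\ref{lem:geom}(iii) is designed to guarantee.
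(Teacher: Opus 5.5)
Your proposal is correct and follows the paper's proof essentially step for step. The paper defines the nodal values through the $L^2(K_z)$-projection $q_z$ of $v$, which coincides with your polynomial extension of $v|_{K_z}$ because $v|_{K_z}\in\Q_p(K_z)$. It then uses the same nodal bound, constant reproduction with the mean over $U_{\cell'}$, the inverse and Poincar\'e inequalities for \eqref{eq:Q1}, and the chain transfer of Lemma~\ref{lem:transfer} paid for in $\gamma^{-1}g_\ell$ on the strip part of \eqref{eq:Q2}. Two bookkeeping details: the paper bounds $\|\Pi_{\ell-1}(v-c)\|_{0,\cell'}$ by $|\cell'|^{1/2}\max_z|q_z(z)-c|$, whereas your sum over the $(p+1)^d$ nodes would add a factor $(p+1)^{d/2}$ to $C_\Pi$; and the normalized node coordinate is $|t|\le 1+16\sqrt d$, so $T_p(|t|)\le(2|t|)^p$ gives the base $2+32\sqrt d$.
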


\begin{proof}
    \emph{Step 1 (construction).}
    For each Gauss--Lobatto node $z$ of $V_{\ell-1}$ on $\mesh_{\ell-1,\Omega}$
    (the nodal basis fixed in Section~\ref{sec:theory})
    let $K_z\subset\Omega$ be the interior fine cell of
    Lemma~\ref{lem:geom}(ii) and let $q_z\in\Q_p(K_z)$ be the $L^2(K_z)$
    projection of $v$. Define $\Pi_{\ell-1}v\in V_{\ell-1}$ by its nodal
    values $(\Pi_{\ell-1}v)(z) := q_z(z)$, the polynomial evaluated at $z$.
    Since $\operatorname{dist}(z,K_z)\le 4\sqrt d\, h_{\ell-1} = 8\sqrt d\,
        h_\ell$ is a bounded multiple of the cell size, polynomial norm growth
    over such distances is bounded:
    \begin{gather}\label{eq:nodal-bound}
        |q_z(z)| \le Z_p\, h_\ell^{-d/2}\, \|q_z\|_{0,K_z}
        \le Z_p\, h_\ell^{-d/2}\, \|v\|_{0,K_z}.
    \end{gather}
    Indeed, mapping each coordinate extent of $K_z$ to $(-1,1)$ places the
    corresponding coordinate of $z$ at $|t|\le1+16\sqrt d$; per coordinate
    the Chebyshev comparison principle gives the growth factor
    $T_p(|t|)\le(2|t|)^p\le(2+32\sqrt d)^p$ and the Nikolskii inequality
    costs $(p+1)/\sqrt2$, exactly as in the proof of
    Lemma~\ref{lem:transfer}; tensorizing over the $d$ coordinates yields
    $Z_p\le(p+1)^d(2+32\sqrt d)^{dp}$.
    $\Pi_{\ell-1}$ reproduces constants: if $v\equiv c$ then $q_z \equiv c$
    and all nodal values are $c$.

    \emph{Step 2 (local stability, proof of~\eqref{eq:Q1}).}
    Fix $\cell'\in\mesh_{\ell-1,\Omega}$, let $U_{\cell'}$ be the domain of
    Lemma~\ref{lem:geom}(iii), it contains $K_z$ for every node $z$ of
    $\cell'$, and let $c := \bar v_{U_{\cell'}}$ be the mean of $v$
    over it (well defined: $U_{\cell'}\subset\Omega\subseteq\Omega_\ell$).
    Using constant reproduction, the inverse inequality on $\cell'$, the
    nodal basis representation, \eqref{eq:nodal-bound} applied to $v-c$,
    and $h_{\ell-1}=2h_\ell$,
    \begin{gather*}
        |\Pi_{\ell-1}v|_{1,\cell'} = |\Pi_{\ell-1}(v-c)|_{1,\cell'}
        \le C\,C_Mp^2\, h_{\ell-1}^{-1+d/2} \max_{z\in \cell'} |q_z(z) - c|
        \le C\,C_Mp^2 Z_p\, h_{\ell-1}^{-1} \|v-c\|_{0,U_{\cell'}},
    \end{gather*}
    and the uniform Poincar\'e inequality~\eqref{eq:poincare-U}
    bounds this by $C\,C_Mp^2Z_p\,|v|_{1,U_{\cell'}}$; the inverse
    inequality costs one Markov factor $C_Mp^2$, and the nodal
    representation costs only an absolute constant, since the
    Gauss--Lobatto quadrature error of $w^2$ is one-signed and the weights
    sum to the cell volume, so $\|w\|^2_{0,\cell'}\le
        |\cell'|\max_{z}|w(z)|^2$ for $w\in\Q_p$. Squaring and summing over
    $\cell'$, with the finite overlap of the $U_{\cell'}$,
    gives~\eqref{eq:Q1}.

    \emph{Step 3 (approximation on the interior part).}
    With $\cell'$, $c$ as above, for the portion of $\cell'\cap\Omega_\ell$
    consisting of fine cells contained in $\Omega$:
    $\|v-\Pi_{\ell-1}v\|_{0,\cell'\cap\Omega} \le \|v-c\|_{0,U_{\cell'}} +
        \|\Pi_{\ell-1}(v-c)\|_{0,\cell'}$, and both terms are bounded by
    $C\,Z_p\,h_{\ell-1}|v|_{1,U_{\cell'}}$ by the Poincar\'e inequality and
    the nodal bound as in Step 2 (without the inverse inequality, so no
    Markov factor).

    \emph{Step 4 (approximation on the strip).}
    Let $K\subset \cell'$ be an active fine cell not contained in $\Omega$
    (hence cut) and let $K=K_0,\dots,K_M$ be its chain from
    Lemma~\ref{lem:geom}(i), ending in $I(K)\subset\Omega$ within distance
    $4\sqrt d\,h_\ell$ of $K$. Lemma~\ref{lem:geom}(iii) gives
    $I(K)\subset U_{\cell'}$. Apply
    Lemma~\ref{lem:transfer} to $w := v-c$ along the chain (the jumps of
    $w$ equal those of $v$):
    \begin{gather*}
        \|v-c\|^2_{0,K}
        \le \Xi_p\Bigl( \|v-c\|^2_{0,I(K)}
        + 2(p+1)\, h_\ell^2 \sum_{F\in\text{chain}} \sum_{k=0}^{p}
        \frac{h_\ell^{2k-1}}{(k!)^2}
        \|\llbracket\partial_n^k v\rrbracket\|^2_{0,F}\Bigr),
    \end{gather*}
    the chain iteration producing the same amplification $\Xi_p$ as in
    Proposition~\ref{prop:cutfem}.
    Adding the $\Pi_{\ell-1}(v-c)$ part as in Step 3, summing over the cut
    cells of $\cell'$ (bounded chain reuse), then over $\cell'$ (finite
    overlap), and applying the Poincar\'e inequality once more,
    \begin{gather*}
        \|v-\Pi_{\ell-1}v\|^2_{0,\Omega_\ell}
        \le C\bigl(Z_p^2+\Xi_p\bigr)\, h_{\ell-1}^2\, |v|^2_{1,\Omega}
        + C\,(p+1)\,\Xi_p\, T_\ell ,
        \qquad
        T_\ell := h_\ell^2 \sum_{F\in\mathbb F_G}\sum_{k=0}^{p}
        \frac{h_\ell^{2k-1}}{(k!)^2}
        \|\llbracket\partial_n^k v\rrbracket\|^2_{0,F}.
    \end{gather*}
    The transfer sum is the ghost penalty itself, up to the global weight:
    the $k=0$ terms vanish since $v\in V_\ell$ is continuous across interior
    faces, and the orders $k\ge1$ carry exactly the standard weights, so
    $T_\ell = \gamma^{-1} h_\ell^2\, g_\ell(v,v)$. Hence~\eqref{eq:Q2}
    follows with the constant~\eqref{eq:CQ} and the factor $h_{\ell-1}^2 =
        4h_\ell^2$ throughout --- no ghost-chain estimate and no extension
    property enter, the ghost penalty being bounded by itself at the price
    $\gamma^{-1}$.
\end{proof}

\subsection{The stable decomposition}

% Assumptions and patch notation for the decomposition.
Assumption~\ref{ass:geometry} requires the domain boundary to be resolved on every level used in the analysis and the
active domains to be nested, which supports the cut-independent geometric and intergrid estimates.
Assumption~\ref{ass:cutfem}, verified in Proposition~\ref{prop:cutfem}, relates these estimates to the stabilized
energy norm through coercivity and discrete extension. Finally, Assumption~\ref{ass:covering}, verified for both patch
constructions in Lemma~\ref{lem:coverimpl}, ensures that every fine-grid degree of freedom can be assigned to a local
patch space. Recall that $\omega_j=\Omega_{\ell,j}$ is the union of the active cells in the background-grid patch
centered at the admissible vertex $\vertex_j$, and that $V_{\ell,j}$ denotes either of the associated local spaces
introduced in Section~\ref{sec:smoother}. These properties yield the stable decomposition that connects the CutFEM
discretization to the two-level Schwarz convergence theory.

% State the stable decomposition with its stabilization-parameter dependence.
\begin{theorem}[stable decomposition]\label{thm:additive}
    Let Assumptions~\ref{ass:geometry}, \ref{ass:cutfem} and
    \ref{ass:covering} hold. Let
    $V_{\ell,0}:=\prol{\ell}V_{\ell-1}$ and let $\{V_{\ell,j}\}_{j=1}^J$
    be the vertex-patch subspaces. For fixed global ghost weight
    $\gamma\in(0,1]$ and an admissible Nitsche parameter $\gamma_D$, there is
    $C_{\mathrm{sd}}=C_{\mathrm{sd}}(p,\gamma,\gamma_D)$ independent of
    $\ell$ and of the cut configuration such that every $v\in V_\ell$ admits
    a decomposition $v=\sum_{j=0}^J v_j$, $v_j\in V_{\ell,j}$, with
    \begin{gather}\label{eq:stable-split}
        \sum_{j=0}^{J} \energy{v_j}_\ell^2 \le C_{\mathrm{sd}}\,
        \energy{v}_\ell^2 .
    \end{gather}
    Moreover, the coarse component is the prolongation
    $v_0=\prol{\ell}\widehat v_0$ of a function $\widehat v_0\in V_{\ell-1}$
    whose \emph{coarse} energy obeys the same bound,
    \begin{gather}\label{eq:stable-split-coarse}
        \energy{\widehat v_0}_{\ell-1}^2 \le C_{\mathrm{sd}}\,
        \energy{v}_\ell^2 ,
    \end{gather}
    the form in which the decomposition enters the rediscretized coarse
    solve of Theorem~\ref{thm:twolevel}(iii).
    For prescribed degree-dependent choices $\gamma=\gamma(p)$ and
    $\gamma_D=\gamma_D(p)$, we abbreviate this constant by
    $C_{\mathrm{sd}}(p)$.
    The consequences for the two-level solvers are drawn in
    Theorem~\ref{thm:twolevel} below.
\end{theorem}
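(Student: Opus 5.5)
The decomposition takes $\widehat v_0:=\Pi_{\ell-1}v$, the shifted quasi-interpolant of Proposition~\ref{prop:quasi}, and $v_0:=\prol{\ell}\widehat v_0$. The remainder $w:=v-v_0\in V_\ell$ is split nodally. Assumption~\ref{ass:covering} (verified in Lemma~\ref{lem:coverimpl}) assigns to each nodal basis function $\phi_{\ell,i}$ a patch index $j(i)$ with $\phi_{\ell,i}\in V_{\ell,j(i)}$. We set $v_j:=\sum_{i:\,j(i)=j} w(x_{\ell,i})\phi_{\ell,i}$. Then $v=\sum_{j=0}^J v_j$ with $v_j\in V_{\ell,j}$. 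Every bound will be reduced to the mesh norm $N_\ell$ via~\eqref{eq:coercive}, so that $\energy{\cdot}_\ell$ is replaced by $N_\ell$ at the cost of $C_A/c_A$ on each side.

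\emph{Coarse component.} We bound $N_{\ell-1}(\widehat v_0)$ by $N_\ell(v)$, which gives~\eqref{eq:stable-split-coarse}; Lemma~\ref{lem:prolongation} then yields the same bound for $\energy{v_0}_\ell$. There are four terms.
\begin{itemize}
    \item The gradient on $\Omega$ is controlled by~\eqref{eq:Q1}.
    \item The ghost term $g_{\ell-1}(\widehat v_0,\widehat v_0)$ is bounded by Lemma~\ref{lem:ghostchain}(b) at level $m=\ell-1$ by $\gamma\,2d\,\Theta'_{p}\,|\widehat v_0|^2_{1,\Omega_{\ell-1}}$, and hence by~\eqref{eq:Q1}.
    \item The boundary term $H\|\partial_n\widehat v_0\|^2_{0,\Gamma}$ is handled by the cut-cell trace inequality~\eqref{eq:cut-trace} together with one Markov inequality~\eqref{eq:markov}, on each coarse cut cell, in terms of $|\widehat v_0|_{1,\Omega_{\ell-1}}$.
    \item The delicate term is $H^{-1}\|\widehat v_0\|^2_{0,\Gamma}$. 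We write $\widehat v_0=v-(v-\Pi_{\ell-1}v)$ on $\Gamma$. The piece $h_\ell^{-1}\|v\|^2_{0,\Gamma}$ is part of $N_\ell(v)^2$. The error piece is treated by~\eqref{eq:cut-trace} on fine cut cells: the $L^2$ part uses~\eqref{eq:Q2} (the factor $H^{-1}h_\ell^{-1}\cdot H^2$ is harmless), and the $H^1$ part uses $h_\ell|v-\Pi_{\ell-1}v|^2_{1,\cell}$, bounded by~\eqref{eq:ghost-extension} for $v$ and~\eqref{eq:Q1} for $\Pi_{\ell-1}v$.
\end{itemize}
The resulting constant collects $C_\Pi(p)^2\Theta'_p$, $C_Q(p,\gamma)^2$, $C_{\mathrm{ext}}(p)$ and $\gamma_D$.

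\emph{Local components.} Since each $v_j$ is a nodal restriction of $w$ supported in $\omega_j$, we estimate $N_\ell(v_j)^2$ cellwise on $\omega_j$.
\begin{itemize}
    \item The full-cell $H^1$ seminorm over $\omega_j\subseteq\Omega_\ell$ dominates the $\Omega$-gradient. Lemma~\ref{lem:ghostchain}(b) bounds the ghost part by the same seminorm.
    \item The $\Gamma$-terms of $v_j$ are bounded by~\eqref{eq:cut-trace} plus Markov by $h_\ell^{-2}\|v_j\|^2_{0,\cell}+|v_j|^2_{1,\cell}$ on cut cells.
    \item An inverse inequality (factor $C_M^2p^4 h_\ell^{-2}$) followed by the Gauss--Lobatto nodal $L^2$ stability (one-signed quadrature, as in Step~2 of Proposition~\ref{prop:quasi}) gives $\|v_j\|^2_{0,\cell}\le C(p)\|w\|^2_{0,\cell}$ per cell.
\end{itemize}
Summing over $j$ with the bounded patch overlap $2^d$ gives
\begin{gather*}
    \sum_{j\ge1} N_\ell(v_j)^2\le C(p)\bigl(h_\ell^{-2}\|w\|^2_{0,\Omega_\ell}+|w|^2_{1,\Omega_\ell}\bigr).
\end{gather*}
The first term is bounded by~\eqref{eq:Q2} by $C_Q^2 N_\ell(v)^2$. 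The second is bounded by $|v|_{1,\Omega_\ell}+|v_0|_{1,\Omega_\ell}$: $|v|_{1,\Omega_\ell}$ is controlled through~\eqref{eq:ghost-extension}, and $|v_0|_{1,\Omega_\ell}$ through~\eqref{eq:Q1}, using $\Omega_\ell\subseteq\Omega_{\ell-1}$.

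\emph{Main obstacle.} I expect the main obstacle to be the interface term $H^{-1}\|\widehat v_0\|^2_{0,\Gamma}$ in the coarse energy. The coarse function lives on the larger coarse active domain, so $\Gamma$ cuts coarse cells that contain no interior fine cells. The argument must pass through the fine function and the approximation~\eqref{eq:Q2} on $\Omega_\ell$. The remaining portion $\Omega_{\ell-1}\setminus\Omega_\ell$ is reached by applying~\eqref{eq:cut-trace} on full coarse cells with polynomial inverse estimates; this is where the neighbourhoods $U_{\cell'}$ of Lemma~\ref{lem:geom}(iii) and Corollary~\ref{cor:poincare} should be invoked. If the direct route fails, I would fall back on the trace inequality~\eqref{eq:trace-U} on $U_{\cell'}$ together with the interface patch of Lemma~\ref{lem:geom}(iv). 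All steps keep the constants independent of $\ell$ and of the cut, which yields~\eqref{eq:stable-split}.
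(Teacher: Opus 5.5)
Your proposal is correct. Apart from one step it is the paper's proof:
\begin{itemize}
\item the same coarse representative $\widehat v_0=\Pi_{\ell-1}v$;
\item the same nodal splitting of $w=v-v_0$ via Assumption~\ref{ass:covering}, with Gauss--Lobatto stability and Lemma~\ref{lem:ghostchain}(b) for the local pieces;
\item the same treatment of the coarse volume, ghost and flux terms by \eqref{eq:Q1}, Lemma~\ref{lem:ghostchain}(b), and \eqref{eq:cut-trace} with one Markov factor.
\end{itemize}

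The genuine difference is the interface term $H^{-1}\|\widehat v_0\|^2_{0,\Gamma}$, $H=h_{\ell-1}$. The paper works on the coarse cut cells. It applies \eqref{eq:cut-trace} there to reduce the term to $H^{-2}\|\widehat v_0\|^2_{0,\cell'}$. It then splits $\widehat v_0=\Pi_{\ell-1}(v-c')+c'$, with $c'$ the mean of $v$ over $U_{\cell'}$. The first part is bounded by \eqref{eq:nodal-bound} and Poincar\'e. The constant $c'$ is recovered from the interface patch $\Gamma_{\cell'}$ of Lemma~\ref{lem:geom}(iv) through \eqref{eq:trace-U}.

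You instead use $\widehat v_0=v-w$ on $\Gamma\subset\overline\Omega\subseteq\Omega_\ell$. You apply \eqref{eq:cut-trace} on the \emph{fine} cut cells to $w\in V_\ell$ and invoke \eqref{eq:Q2}. This works, since $H^{-1}h_\ell^{-1}H^2=2$. The gradient part $|w|_1$ on the cut cells can be handled as you propose, or simply by the inverse inequality followed by \eqref{eq:Q2} again.

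What your route buys: it is shorter, and it makes Lemma~\ref{lem:geom}(iv) and the trace half of Corollary~\ref{cor:poincare} unnecessary for this theorem. All the geometry is already absorbed into \eqref{eq:Q2}.

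What the paper's route buys: a coarse bound $N_{\ell-1}(\widehat v_0)^2\le C(p)\bigl(|v|^2_{1,\Omega}+h_\ell^{-1}\|v\|^2_{0,\Gamma}\bigr)$ that is free of $\gamma^{-1}$ and of $\Xi_p$. Yours imports $C_Q(p,\gamma)^2$, which contains $(p+1)\Xi_p(1+\gamma^{-1})$, into \eqref{eq:stable-split-coarse}. The local components carry $C_Q(p,\gamma)^2$ anyway, so the common constant $C_{\mathrm{sd}}$ is not worsened in order.

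Three smaller points.
\begin{itemize}
\item The worry in your last paragraph is unfounded. Your bound for $N_{\ell-1}(\widehat v_0)$ touches $\Omega_{\ell-1}\setminus\Omega_\ell$ only in the coarse ghost term and in the flux bound over full coarse cut cells. Both are expressed through $|\widehat v_0|_{1,\Omega_{\ell-1}}$, which \eqref{eq:Q1} controls on all of $\Omega_{\ell-1}$. The fallback you describe is exactly the paper's argument.
\item In the local estimate the term $|w|^2_{1,\Omega_\ell}$ is superfluous. Nodal truncation does not reproduce constants, so $|v_j|_1$ cannot be bounded by $|w|_1$ and must go through the inverse inequality in any case. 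After that, $h_\ell^{-2}\|w\|^2_{0,\Omega_\ell}$ alone suffices, and \eqref{eq:ghost-extension} is not needed in this proof.
\item The per-cell bound $\|v_j\|_{0,\cell}\le C(p)\|w\|_{0,\cell}$ needs the reverse Gauss--Lobatto estimate $\|q\|^2_{\mathrm{GL},\cell}\le C_{\mathrm{GL}}\|q\|^2_{0,\cell}$. The one-signed quadrature error alone gives only the other direction.
\end{itemize}
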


\begin{proof}
    % Reduce the energy estimate to the mesh-dependent norm.
    Write $h:=h_\ell$, $H:=h_{\ell-1}=2h$. By the norm equivalence~\eqref{eq:coercive} it suffices to construct the
    decomposition with $N$-norms on both sides. Constants below may depend on $p$, $\gamma$, and $\gamma_D$, but not on
    $\ell$ or on the cut configuration.

    % Construct the coarse representative and the fine-level remainder.
    \emph{Step 1 (coarse component).}
    Let $\Pi_{\ell-1} : V_\ell \to V_{\ell-1}$ be the shifted
    quasi-interpolant of Proposition~\ref{prop:quasi}, with the stability
    and approximation properties \eqref{eq:Q1}--\eqref{eq:Q2}.
    Set
    \begin{gather*}
        \widehat v_0:=\Pi_{\ell-1}v\in V_{\ell-1},
        \qquad
        v_0:=\prol\ell\widehat v_0\in V_{\ell,0},
        \qquad
        w:=v-v_0\in V_\ell .
    \end{gather*}
    Since $\prol\ell$ is the identity on functions on $\Omega_\ell$,
    $w=v-\Pi_{\ell-1}v$ there, and \eqref{eq:Q2} applies to $w$ directly.

    % Split the remainder according to internal patch degrees of freedom.
    \emph{Step 2 (nodal splitting of the remainder).}
    By Assumption~\ref{ass:covering} each fine degree of freedom $i$ is
    updated by at least one admissible patch; fix one such $j(i)$ and set
    \begin{gather*}
        v_j := \sum_{i:\, j(i)=j} w_i\, \phi_{\ell,i} \;\in V_{\ell,j},
        \qquad
        \sum_{j=1}^J v_j = w ,
    \end{gather*}
    where $w_i$ are the coefficients of $w$. Let
    $\|\cdot\|_{\mathrm{GL},\cell}$ be the norm induced by tensor-product
    Gauss--Lobatto quadrature on $\cell$. For $q\in\Q_p(\cell)$,
    \begin{gather*}
        \|q\|^2_{0,\cell}
        \le \|q\|^2_{\mathrm{GL},\cell}
        \le C_{\mathrm{GL}}(p,d)\|q\|^2_{0,\cell},
        \qquad
        C_{\mathrm{GL}}(p,d):=(2+p^{-1})^d\le3^d .
    \end{gather*}
    In one dimension the quadrature excess occurs only in the highest
    Legendre mode, for which the quadrature-to-exact norm ratio is
    $2+p^{-1}$~\cite[Chap.~5]{CanutoHussainiQuarteroniZang06},
    originally due to~\cite{CanutoQuarteroni82}; the stated constant is
    obtained by tensorization.
    Since the nodal splitting partitions the values of $w$ at every
    Gauss--Lobatto node,
    $\sum_j\|v_j\|^2_{\mathrm{GL},\cell}
        =\|w\|^2_{\mathrm{GL},\cell}$. Set
    $M_p:=dC_M^2p^4$. Summing over the cells and applying the first
    inequality of~\eqref{eq:markov} per coordinate gives
    \begin{gather}\label{eq:nodal-stable}
        \sum_{j=1}^J \|v_j\|^2_{0,\Omega_\ell}
        \le C_{\mathrm{GL}}(p,d)\|w\|^2_{0,\Omega_\ell},
        \qquad
        \sum_{j=1}^J |v_j|^2_{1,\Omega_\ell}
        \le C_{\mathrm{GL}}(p,d)M_p h^{-2}
        \|w\|^2_{0,\Omega_\ell}.
    \end{gather}
    By~\eqref{eq:Q2} and $H=2h$,
    \begin{gather}\label{eq:w-small}
        h^{-2}\|w\|^2_{0,\Omega_\ell}
        \le C_Q(p,\gamma)^2\,(H/h)^2
        \bigl( |v|^2_{1,\Omega} + g_\ell(v,v) \bigr)
        \le 4C_Q(p,\gamma)^2\, N_\ell(v)^2 .
    \end{gather}

    % Bound every term in the norm of a local component.
    \emph{Step 3 (energy of the local pieces).}
    Fix $j\ge1$. The $N_\ell$-norm of $v_j$ has four terms. Volume:
    $\|\nabla v_j\|_{0,\Omega} \le |v_j|_{1,\Omega_\ell}$. Nitsche terms: by
    the cut-cell trace inequality~\eqref{eq:cut-trace}, applied cellwise to
    $v_j$ and to the components of $\nabla v_j$,
    \begin{gather*}
        h^{-1}\|v_j\|^2_{0,\Gamma}
        \le C\bigl(h^{-2}\|v_j\|^2_{0,\omega_j}
        +|v_j|^2_{1,\omega_j}\bigr),
        \\
        h\|\partial_n v_j\|^2_{0,\Gamma}
        \le C\biggl(|v_j|^2_{1,\omega_j}
        +h^2\sum_{\cell\subset\omega_j}\|D^2v_j\|^2_{0,\cell}\biggr)
        \le C(1+M_p)|v_j|^2_{1,\omega_j}.
    \end{gather*}
    Here $C$ is the cut-independent, degree-free constant
    of~\eqref{eq:cut-trace}. The last inequality follows from the first
    inequality of~\eqref{eq:markov}, applied per coordinate to each component
    of $\nabla v_j$, exactly as in the proof of
    Proposition~\ref{prop:cutfem}(ii). Since $v_j$ vanishes outside
    $\omega_j$, Lemma~\ref{lem:ghostchain}(b) gives
    \begin{gather*}
        g_\ell(v_j,v_j)\le G_p|v_j|^2_{1,\omega_j},
        \qquad G_p:=2d\gamma\Theta'_p .
    \end{gather*}
    Summing over $j$ and using~\eqref{eq:nodal-stable},
    \eqref{eq:w-small},
    \begin{gather}\label{eq:local-sum}
        \begin{aligned}
            \sum_{j=1}^J N_\ell(v_j)^2
             & \le C(1+M_p+G_p)
            \sum_{j=1}^J\bigl(
            |v_j|^2_{1,\Omega_\ell}
            +h^{-2}\|v_j\|^2_{0,\Omega_\ell}\bigr)
            \\
             & \le 4C\,C_{\mathrm{GL}}(p,d)
            (1+M_p+G_p)(1+M_p)
            C_Q(p,\gamma)^2N_\ell(v)^2 .
        \end{aligned}
    \end{gather}
    Here $C$ depends only on $d$, while
    $C_{\mathrm{GL}}(p,d)\le3^d$ is degree-uniform. In particular, the ghost
    factor $G_p$ occurs only once in~\eqref{eq:local-sum}.

    % Control the bulk and ghost terms of the coarse representative.
    \emph{Step 4 (energy of the coarse component).}
    By Lemma~\ref{lem:prolongation} it suffices to bound
    $N_{\ell-1}(\widehat v_0)$. The volume term is controlled
    by~\eqref{eq:Q1}. For the coarse-level ghost penalty,
    Lemma~\ref{lem:ghostchain}(b) gives
    $g_{\ell-1}(\widehat v_0,\widehat v_0)\le
        \gamma\,2d\,\Theta'_p
        |\widehat v_0|^2_{1,\Omega_{\ell-1}}$, bounded by~\eqref{eq:Q1}.
    For the Nitsche terms, the
    trace inequality \eqref{eq:cut-trace} on coarse cut cells (applied to
    $\widehat v_0$ and to $\nabla\widehat v_0$, with the inverse inequality
    on the full cells) gives
    \begin{gather*}
        H^{-1}\|\widehat v_0\|^2_{0,\Gamma}
        + H\|\partial_n\widehat v_0\|^2_{0,\Gamma}
        \le C(p)\Bigl( H^{-2}\!\!\sum_{\cell'\in\mesh_{\ell-1,\Gamma}}\!\!
        \|\widehat v_0\|^2_{0,\cell'}
        + |\widehat v_0|^2_{1,\Omega_{\ell-1}}
        \Bigr).
    \end{gather*}

    % Estimate the nonconstant part on each coarse cut cell.
    Fix $\cell'\in\mesh_{\ell-1,\Gamma}$, let $U := U_{\cell'}\subset\Omega$ be its neighbourhood from
    Lemma~\ref{lem:geom}(iii) and $c' := \bar v_{U}$ the mean of $v$ over it, and split, using that $\Pi_{\ell-1}$
    reproduces constants, $\widehat v_0=\Pi_{\ell-1}(v-c')+c'$ on $\cell'$. For the first part, the nodal
    bound~\eqref{eq:nodal-bound} applied to $v-c'$ exactly as in Step 2 of the proof of Proposition~\ref{prop:quasi} gives
    \begin{gather*}
        \|\Pi_{\ell-1}(v-c')\|^2_{0,\cell'}
        \le C(p)\,\|v-c'\|^2_{0,U}
        \le C(p)\, H^2\, |v|^2_{1,U} .
    \end{gather*}

    % Recover the constant part from a physical-boundary patch.
    For the constant part, let $\Gamma_{\cell'}\subset\Gamma\cap\partial U$ be the patch from Lemma~\ref{lem:geom}(iv). Its
    surface measure bound $|\Gamma_{\cell'}|\ge c\,H^{d-1}$ and the trace inequality~\eqref{eq:trace-U} of
    Corollary~\ref{cor:poincare} on $U$ yield
    \begin{gather*}
        H^{d}\,|c'|^2
        \le C\,H\,\|c'\|^2_{0,\Gamma_{\cell'}}
        \le C\,H\bigl( \|v\|^2_{0,\Gamma_{\cell'}}
        + \|v-c'\|^2_{0,\Gamma_{\cell'}} \bigr)
        \le C\bigl( H\,\|v\|^2_{0,\Gamma_{\cell'}} + H^2\,|v|^2_{1,U}\bigr),
    \end{gather*}
    the last step by the trace inequality followed by the Poincar\'e
    inequality on $U$. Hence
    \begin{gather*}
        H^{-2}\|\widehat v_0\|^2_{0,\cell'}
        \le C(p)\bigl(H^{-1}\|v\|^2_{0,\Gamma_{\cell'}}
        +|v|^2_{1,U_{\cell'}}\bigr).
    \end{gather*}

    % Sum the coarse-cell estimates using finite overlap.
    Summing over the coarse cut cells, the $U_{\cell'}$ and $\Gamma_{\cell'}$ have finite overlap and, with $H=2h$, give
    \begin{gather*}
        H^{-2}\!\!\sum_{\cell'\in\mesh_{\ell-1,\Gamma}}\!\!
        \|\widehat v_0\|^2_{0,\cell'}
        \le C(p)\bigl( h^{-1}\|v\|^2_{0,\Gamma} + |v|^2_{1,\Omega} \bigr)
        \le C(p)\, N_\ell(v)^2 .
    \end{gather*}
    Together with the preceding volume and ghost estimates, this proves
    $N_{\ell-1}(\widehat v_0)^2\le C(p,\gamma)N_\ell(v)^2$. The
    $N$-norm estimate proved in Lemma~\ref{lem:prolongation} therefore gives
    $N_\ell(v_0)^2\le C(p,\gamma)N_\ell(v)^2$.

    % Combine the coarse and local estimates and return to the energy norm.
    \emph{Step 5 (conclusion).}
    The decomposition $v=v_0+\sum_{j=1}^Jv_j$ is exact by Step 2. Steps 3--4,
    boundedness of $A_\ell$, and coercivity on the right-hand side give
    \begin{gather*}
        \sum_{j=0}^J\energy{v_j}_\ell^2
        \le C_A\sum_{j=0}^J N_\ell(v_j)^2
        \le C(p,\gamma)C_A N_\ell(v)^2
        \le C(p,\gamma)\frac{C_A}{c_A}\energy{v}_\ell^2 .
    \end{gather*}
    This is~\eqref{eq:stable-split}. Step 4 also bounds the coarse
    representative in the coarse energy,
    $\energy{\widehat v_0}_{\ell-1}^2 \le C_A N_{\ell-1}(\widehat v_0)^2
        \le C(p,\gamma)\,(C_A/c_A)\,\energy{v}_\ell^2$, which
    is~\eqref{eq:stable-split-coarse}. Both hold with a common
    $C_{\mathrm{sd}}=C_{\mathrm{sd}}(p,\gamma,\gamma_D)$, independent of
    $\ell$ and of the cut configuration.
\end{proof}

% Relate the stable decomposition to existing domain-decomposition results.
For cell patches without a coarse space, the corresponding decomposition was studied by de Prenter, Verhoosel, and van
Brummelen~\cite{PrenterVerhooselBrummelen19}; see also the survey~\cite{dePrenterVerhooselvanBrummelenLarsonBadia23}.
Their preconditioner is independent of the cut configuration but deteriorates with the mesh size, with eigenvalue
ratios observed numerically to grow like $h^{-2}$, because no coarse level is present. The combination of a multigrid
coarse space with vertex patches removes this mesh-size dependence and, to the best of our knowledge, has not
previously been analyzed. Gross and Reusken~\cite{GrossReusken23} analyze a two-subspace additive Schwarz
preconditioner, splitting the CutFEM space into the finite element space on the background mesh and the span of the cut
basis functions. Their splitting is stable uniformly in the mesh size and in the cut position, for both the interface
and the fictitious domain discretization, and yields an optimal preconditioner once the background block is solved by
multigrid; the constants depend on the jump in the diffusion coefficient across the interface. The decomposition
analyzed here differs in that the coarse space is a level of the multigrid hierarchy and the local spaces are vertex
patches, so the extension estimate~\eqref{eq:ghost-extension} is available on every level. The following remark records
the parameter dependence of the resulting stable-decomposition constant.

\begin{remark}[parameter dependence of $C_{\mathrm{sd}}$]\label{rem:csd-parameters}
    % Identify every source of parameter dependence in the decomposition.
    For prescribed choices $\gamma=\gamma(p)$ and $\gamma_D=\gamma_D(p)$, the dependence of $C_{\mathrm{sd}}(p)$ enters
    through four sources: the quasi-interpolation constant $C_Q(p,\gamma)$, including the factor
    $(p+1)\Xi_p(1+\gamma^{-1})$ in~\eqref{eq:CQ}; the Markov factor $M_p=dC_M^2p^4$ of~\eqref{eq:local-sum}; the comparison
    of the ghost penalty with the $H^1$-seminorm through $\gamma\Theta'_p$; and the norm-equivalence factor $C_A/c_A$,
    including the admissible Nitsche parameter $\gamma_D$. The Gauss--Lobatto splitting constant
    $C_{\mathrm{GL}}(p,d)\le3^d$ and the cut-cell trace constant~\eqref{eq:cut-trace} are degree-uniform. The factor
    $\gamma\Theta'_p$ is superexponential in $p$ and linear in the ghost weight, which motivates the shedding studied in
    Section~\ref{sec:cut-adaptive}.
\end{remark}

\begin{remark}[status of the hypotheses]\label{rem:gaps}
    % Certify that no hypothesis of the theory is left unverified.
    Lemma~\ref{lem:coverimpl} verifies the covering assumption, Proposition~\ref{prop:quasi} establishes the
    quasi-interpolation properties, and Proposition~\ref{prop:cutfem} verifies Assumption~\ref{ass:cutfem} under the
    standard ghost-weight scaling. The constants in the trace, Markov, and Poincar\'e inequalities remain unspecified.
\end{remark}

%%%%%%%%%%%%%%%%%%%%%%%%%%%%%%%%%%%%%%%%%%%%%%%%%%%%%%%%%%%%
\subsection{The two-level convergence theorem}
\label{sec:twolevel}
%%%%%%%%%%%%%%%%%%%%%%%%%%%%%%%%%%%%%%%%%%%%%%%%%%%%%%%%%%%%

We now assemble the ingredients into the convergence statement. Throughout, $P_j$ denotes the $A$-orthogonal projection
onto $V_{\ell,j}$, $j=0,\dots,J$ (with $V_{\ell,0}$ the coarse space): $P_j v\in V_{\ell,j}$ and $A_\ell(P_j
    v,w)=A_\ell(v,w)$ for all $w\in V_{\ell,j}$, so the exact local (respectively coarse) solve has error propagation
$I-P_j$. Besides the stable decomposition~\eqref{eq:stable-split}, the abstract Schwarz framework needs a bound on the
interactions between the local spaces, which for vertex patches is a counting exercise; the only CutFEM-specific point
is that the ghost penalty couples patches that share no degree of freedom, which enlarges the count but keeps it
finite.

\begin{lemma}[bounded interaction]\label{lem:interaction}
    For $j,k\ge1$ let $\varepsilon_{jk}\in[0,1]$ be the smallest constants
    with
    \begin{gather*}
        |A_\ell(v_j,v_k)| \le \varepsilon_{jk}\,
        \energy{v_j}_\ell\, \energy{v_k}_\ell,
        \qquad v_j\in V_{\ell,j},\ v_k\in V_{\ell,k}.
    \end{gather*}
    Then the interaction matrix $\mathcal E=(\varepsilon_{jk})_{j,k\ge1}$
    satisfies
    \begin{gather}\label{eq:rhoE}
        \rho(\mathcal E) \;\le\; N_O(d) \;:=\; 5^d ,
    \end{gather}
    independently of $\ell$, of the cut configuration, and of $p$.
\end{lemma}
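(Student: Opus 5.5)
The bound will follow from a sparsity count. By Cauchy--Schwarz in the $A_\ell$ inner product (which is symmetric positive definite by~\eqref{eq:coercive}), every entry satisfies $\varepsilon_{jk}\le1$. Moreover, $\varepsilon_{jk}=0$ as soon as $A_\ell(v_j,v_k)=0$ for all admissible pairs. I will therefore bound $\rho(\mathcal E)$ by the maximal row sum, $\rho(\mathcal E)\le\|\mathcal E\|_\infty\le\max_j\#\{k:\varepsilon_{jk}\neq0\}$. It then remains to show that each patch interacts with at most $5^d$ patches, itself included.

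\emph{Step 1: support of the local functions.} Both choices of $V_{\ell,j}$ consist of functions supported in $\omega_j\subseteq\widehat\omega_j=\vertex_j+[-h_\ell,h_\ell]^d$. For $V^{\mathrm D}_{\ell,j}$ this holds because every basis function has a node interior to $\widehat\omega_j$. For $V^{\mathrm{FR}}_{\ell,j}$ it holds because $\omega_i\subseteq\omega_j$. So only the cells of the $2\times\dots\times2$ block around $\vertex_j$ carry $v_j$.

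\emph{Step 2: locality of every term of $A_\ell$.} The bulk term and the Nitsche terms are integrals over $\Omega\cap\cell$ and over $\Gamma\cap\cell$, cell by cell. A nonzero contribution therefore needs a cell lying in both $\widehat\omega_j$ and $\widehat\omega_k$. The ghost penalty is a sum over faces $F$ of products of jumps. The jump of $v_j$ across $F$ can be nonzero only if one of the two cells adjacent to $F$ lies in $\widehat\omega_j$. Hence $g_\ell(v_j,v_k)\neq0$ requires a face adjacent both to a cell of $\widehat\omega_j$ and to a cell of $\widehat\omega_k$. In other words, some cell of one patch must equal, or share a face with, some cell of the other patch. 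This is the CutFEM-specific enlargement mentioned before the lemma.

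\emph{Step 3: counting.} The cells of $\widehat\omega_j$ occupy the index range $\{-1,0\}^d$ relative to $\vertex_j$ in units of $h_\ell$, and the face-neighbours of these cells lie in $\{-2,\dots,1\}^d$. A patch $\widehat\omega_k$ with centre offset $m\in\mathbb Z^d$ occupies $m+\{-1,0\}^d$. It meets the enlarged block exactly when each coordinate $m_i$ lies in $\{-2,\dots,2\}$. That gives at most $5$ choices per direction, so at most $5^d$ patches in total; patches that are not admissible only reduce the count. None of these arguments depends on $p$, $\ell$, or the cut, which proves~\eqref{eq:rhoE}.

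The main obstacle is Step 2 for the ghost term. I must check that the face-neighbour relation is genuinely per coordinate, so that the enlargement is one cell in each direction. Face neighbours differ in only one coordinate, which gives a bound even tighter than $5^d$; using $5^d$ as a per-coordinate box bound is safe. I must also confirm that the bound on jumps uses only the support of $v_j$, and not any information about its degrees of freedom.
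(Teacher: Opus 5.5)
Your proposal is correct and follows the paper's proof essentially step for step. It uses the support of the local functions in $\omega_j$ and the fact that the volume and Nitsche terms need overlapping cells, while the ghost penalty enlarges the interaction to face-adjacent cells. It then bounds the centre offset by $|m_i|\le 2$ per coordinate, giving at most $5^d$ nonzero entries per row, and concludes $\rho(\mathcal E)\le\|\mathcal E\|_\infty\le 5^d$ using $\varepsilon_{jk}\le1$ from Cauchy--Schwarz in the $A_\ell$ inner product. Your explicit index count in Step~3 is slightly more detailed than the paper's, which simply asserts the per-coordinate bound.
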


\begin{proof}
    % Count all patch pairs coupled by the stabilized form.
    Functions in $V_{\ell,j}$ vanish outside the (closed) patch $\omega_j$, the union of the $\le 2^d$ active cells sharing
    the vertex $j$. The volume, Nitsche and boundary terms of $A_\ell$ vanish on pairs with $\operatorname{int}\omega_j\cap
        \operatorname{int}\omega_k=\emptyset$; the ghost-penalty term $g_\ell(v_j,v_k)$ involves jumps across faces and can in
    addition be nonzero when a cell of $\omega_j$ is face-adjacent to a cell of $\omega_k$. In either case the vertices
    $j,k$ differ by at most $2$ in each coordinate direction, so every row of $\mathcal E$ has at most $5^d$ nonzero
    entries. Since $A_\ell$ is symmetric and coercive, it defines an inner product, whose Cauchy--Schwarz inequality gives
    $\varepsilon_{jk}\le1$. Therefore $\rho(\mathcal E)\le\|\mathcal E\|_\infty\le 5^d$ (cf.\
    \cite[Lemma~2.10]{ToselliWidlund05}).
\end{proof}

\begin{theorem}[two-level convergence]\label{thm:twolevel}
    Let Assumptions~\ref{ass:geometry}, \ref{ass:cutfem}
    and~\ref{ass:covering} hold, let the local and coarse solves be exact,
    and prescribe $\gamma=\gamma(p)$ and an admissible
    $\gamma_D=\gamma_D(p)$. Let $C_{\mathrm{sd}}(p)$ be the corresponding
    constant of Theorem~\ref{thm:additive}. Then, uniformly in $\ell$ and
    in the position of $\Gamma$ relative to the meshes:
    \begin{itemize}
        \item[(i)] \emph{(additive)} the two-level additive Schwarz operator
              $P_{\mathrm{ad}}=\sum_{j=0}^J P_j$ satisfies
              \begin{gather*}
                  C_{\mathrm{sd}}(p)^{-1}\, A_\ell(v,v) \;\le\;
                  A_\ell(P_{\mathrm{ad}}v,v) \;\le\;
                  \bigl(N_O(d)+1\bigr)\, A_\ell(v,v),
                  \qquad
                  \kappa(P_{\mathrm{ad}}) \le \bigl(N_O(d)+1\bigr)\,
                  C_{\mathrm{sd}}(p);
              \end{gather*}
        \item[(ii)] \emph{(multiplicative)} the two-level error operator of
              the method,
              \begin{gather*}
                  E_{\mathrm{TG}} \;:=\;
                  (I-P_J)\cdots(I-P_1)(I-P_0),
              \end{gather*}
              (coarse correction first, one multiplicative sweep of exact
              patch solves in any fixed order; the reversed and the
              symmetrized $V(1,1)$ orderings obey the same bound) satisfies
              \begin{gather}\label{eq:twolevel}
                  \energy{E_{\mathrm{TG}}}_\ell^2
                  \;\le\; 1 - \frac{1}{\bigl(2\,N_O(d)^2+1\bigr)\,
                      C_{\mathrm{sd}}(p)} \;<\;1 ;
              \end{gather}
        \item[(iii)] \emph{(rediscretized coarse solve)} since the forms are
              non-inherited, a multilevel cycle solves the coarse problem
              with $A_{\ell-1}$ rather than with the Galerkin restriction of
              $A_\ell$. Let the coarse correction be the damped
              rediscretized solve $T_0 := C_{\mathrm{pr}}^{-2}\,
                  \prol{\ell}\, \mathcal R_\ell$, where $\mathcal R_\ell u \in
                  V_{\ell-1}$ is defined by
              \begin{gather*}
                  A_{\ell-1}(\mathcal R_\ell u, z)
                  \;=\; A_\ell(u, \prol{\ell} z)
                  \qquad \forall z\in V_{\ell-1},
              \end{gather*}
              and $C_{\mathrm{pr}}$ is the constant
              of~\eqref{eq:prolongation}. Then
              $\widetilde E_{\mathrm{TG}} :=
                  (I-P_J)\cdots(I-P_1)(I-T_0)$ satisfies
              \begin{gather}\label{eq:twolevel-redisc}
                  \energy{\widetilde E_{\mathrm{TG}}}_\ell^2
                  \;\le\; \tilde\eta^2 \;:=\;
                  1 - \frac{1}{\bigl(2\,N_O(d)^2+1\bigr)\,
                      \bigl(1+C_{\mathrm{pr}}^2\bigr)\,
                      C_{\mathrm{sd}}(p)} \;<\;1 .
              \end{gather}
    \end{itemize}
    In particular the stationary two-level iteration converges, and the
    symmetrized method is a preconditioner with cut-independent condition
    number, for every polynomial degree $p$ and every cut configuration
    admitted by Assumption~\ref{ass:geometry}.
\end{theorem}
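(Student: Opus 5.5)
The plan is to feed the ingredients into the abstract Schwarz framework (Toselli--Widlund, Xu--Zikatanov). The ingredients are the stable decomposition of Theorem~\ref{thm:additive} with constant $C_{\mathrm{sd}}(p)$, the strengthened Cauchy--Schwarz bound $\rho(\mathcal E)\le N_O(d)$ of Lemma~\ref{lem:interaction} among the patch spaces $j\ge1$, and the fact that every $P_j$ is an exact $A_\ell$-orthogonal projection, so the local stability parameter is $\omega=1$.

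For (i), the lower bound is the standard Lions lemma. For any decomposition $v=\sum_j v_j$ we have $A_\ell(v,v)=\sum_j A_\ell(P_jv,v_j)$. Cauchy--Schwarz gives $A_\ell(v,v)\le (\sum_j\energy{P_jv}^2)^{1/2}(\sum_j\energy{v_j}^2)^{1/2}$. We then use $\sum_j\energy{P_jv}^2=A_\ell(P_{\mathrm{ad}}v,v)$ and \eqref{eq:stable-split}. For the upper bound, split off the coarse term, which gives $A_\ell(P_0v,v)\le A_\ell(v,v)$. For the patch part, write $\energy{\sum_{j\ge1}P_jv}^2\le\rho(\mathcal E)\sum_{j\ge1}\energy{P_jv}^2=\rho(\mathcal E)\,A_\ell(\sum_{j\ge1}P_jv,v)$. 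Cancelling gives $A_\ell(\sum_{j\ge1}P_jv,v)\le N_O(d)A_\ell(v,v)$, and the condition number bound follows.

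For (ii), I will invoke the multiplicative Schwarz estimate in the form $\energy{E}^2\le 1-(2-\omega)/\bigl(C_{\mathrm{sd}}(1+\omega\rho(\mathcal E_{\text{full}}))^2\bigr)$ (Toselli--Widlund Thm.~2.9). The interaction matrix must include the coarse space, and this is the point needing care: the coarse space interacts with every patch, so its $\varepsilon_{0j}$ cannot be bounded by counting. Following Toselli--Widlund I will keep the $j\ge1$ block $\mathcal E$ with $\rho(\mathcal E)\le N_O(d)$, and treat the coarse projection separately by the identity $\energy{v}^2-\energy{E v}^2=\sum_j\energy{P_jE_{j-1}v}^2$. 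The telescoping terms $P_j(I-E_{j-1})$ are then controlled using only $\mathcal E$ and the trivial bound $\varepsilon_{0j}\le1$. Tracking constants should produce the factor $2N_O^2+1$ stated in \eqref{eq:twolevel}. I expect this constant bookkeeping to be the main obstacle. If the direct estimate gives a worse factor, the fallback is to relate $E_{\mathrm{TG}}$ to the additive operator by $\sum_j\energy{P_jv}^2\le(1+2\rho^2)\sum_j\energy{P_jE_{j-1}v}^2$. Reversed and symmetrized orderings then follow by adjointness: $E^*$ has the same norm, and the $V(1,1)$ operator is $E^*E$.

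For (iii), the step is to show that $T_0$ is a nonnegative, $A_\ell$-self-adjoint operator with $\energy{T_0}\le1$ and $A_\ell(T_0v,v)\gtrsim A_\ell(P_0v,v)/(1+C_{\mathrm{pr}}^2)$. Self-adjointness follows from the definition: $A_\ell(T_0u,w)$ is symmetric because $A_{\ell-1}$ is. The upper bound uses Lemma~\ref{lem:prolongation}. With $z=\mathcal R_\ell u$ we have $\energy{z}_{\ell-1}^2=A_\ell(u,\prol{}z)\le\energy{u}C_{\mathrm{pr}}\energy{z}_{\ell-1}$, so $C_{\mathrm{pr}}^{-2}$ damping gives $\energy{T_0}\le1$.

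The lower bound is where \eqref{eq:stable-split-coarse} enters. Take the coarse component $\widehat v_0$ from the decomposition. Then $A_\ell(v_0,v)=A_{\ell-1}(\mathcal R_\ell v,\widehat v_0)\le\energy{\mathcal R_\ell v}_{\ell-1}\energy{\widehat v_0}_{\ell-1}$, and $\energy{\mathcal R_\ell v}^2_{\ell-1}=C_{\mathrm{pr}}^2A_\ell(T_0v,v)$. Inserting this into the Lions argument for the modified coarse space yields a stable decomposition with constant $(1+C_{\mathrm{pr}}^2)C_{\mathrm{sd}}$ relative to the operators $T_0,P_1,\dots,P_J$. Since $T_0$ has $\omega\le1$, the same multiplicative estimate as in (ii) gives \eqref{eq:twolevel-redisc}.

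Uniformity in $\ell$ and the cut is inherited from $C_{\mathrm{sd}}$, $C_{\mathrm{pr}}$ and $N_O$.
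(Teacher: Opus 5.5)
Your proposal is correct and follows the paper's proof. In both, the stable decomposition, the interaction bound $\rho(\mathcal E)\le N_O(d)$ over the patch spaces $j\ge1$ only, and $\omega=1$ are fed into the abstract Schwarz theory of Toselli--Widlund (the Lions lemma and the interaction count for (i), Theorem~2.9 for (ii)). For (iii), $T_0$ is treated as an inexact coarse solve with form $C_{\mathrm{pr}}^2A_{\ell-1}$, so that \eqref{eq:stable-split-coarse} gives $C_0^2=(1+C_{\mathrm{pr}}^2)C_{\mathrm{sd}}(p)$.

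The one point to state more carefully is the coarse cross terms in (ii). They cannot be bounded termwise through $\varepsilon_{0j}\le1$, which would cost a factor $J$. They must be bounded collectively via $\sum_{j\ge1}\energy{P_jP_0v}_\ell^2\le\rho(\mathcal E)\,\energy{P_0v}_\ell^2$, which is the additive upper bound of (i) applied to $P_0v$. That collective bound is what makes your ``fallback'' inequality $\sum_j\energy{P_jv}_\ell^2\le(1+2\rho(\mathcal E)^2)\sum_j\energy{P_jE_{j-1}v}_\ell^2$ hold, and together with the telescoping identity it is the whole proof of \eqref{eq:twolevel}.
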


\begin{proof}
    This is the abstract Schwarz theory of
    \cite[Sect.~2.3]{ToselliWidlund05} with the two ingredients proved
    above; exact local and coarse solvers give the local-stability
    constant $\omega=1$ of \cite[Assumption~2.4]{ToselliWidlund05}. The
    lower bound in (i) is the Lions lemma
    (\cite[Lemma~2.5]{ToselliWidlund05}, \cite{Xu92}) with the stable
    decomposition~\eqref{eq:stable-split}, $C_0^2 = C_{\mathrm{sd}}(p)$.
    For the upper bound note that the strengthened Cauchy--Schwarz
    assumption \cite[Assumption~2.3]{ToselliWidlund05} does not involve
    the coarse space: by \cite[Lemma~2.6 and
        Theorem~2.7]{ToselliWidlund05}, $\kappa(P_{\mathrm{ad}}) \le
        C_0^2\,\omega\,(\rho(\mathcal E)+1)$ with $\rho(\mathcal E)\le N_O(d)$
    from~\eqref{eq:rhoE}, the coarse space contributing the single unit
    through $\|P_0\|_A\le\omega$. Part (ii) is
    \cite[Theorem~2.9]{ToselliWidlund05},
    \begin{gather*}
        \energy{E_{\mathrm{TG}}}_\ell^2
        \;\le\; 1 - \frac{2-\omega}
        {\bigl(2\widehat\omega^2\rho(\mathcal E)^2+1\bigr)\,C_0^2},
        \qquad \widehat\omega=\max(1,\omega),
    \end{gather*}
    with $\omega=\widehat\omega=1$; its error operator
    \cite[eq.~(2.10)]{ToselliWidlund05} carries the coarse factor first,
    as in the definition of $E_{\mathrm{TG}}$, and the reversed ordering
    is the $A$-adjoint, of equal norm. Prolongation stability
    (Lemma~\ref{lem:prolongation}) enters through
    Theorem~\ref{thm:additive}, whose coarse component is constructed on
    level $\ell-1$ and measured on level $\ell$.

    % The rediscretized coarse solve as an inexact solver.
    Part (iii) is the same theorem with an inexact coarse solver \cite[Assumption~2.4]{ToselliWidlund05}: the approximate
    coarse form is $\tilde A_0 := C_{\mathrm{pr}}^{2} A_{\ell-1}$, whose solve is $T_0$. Because the forms are
    non-inherited, we write out the $A_\ell$-self-adjointness of $T_0$. For $u,v\in V_\ell$, the symmetry of $A_\ell$
    followed by the definition of $\mathcal R_\ell$, applied to $v$ with the test function $z=\mathcal R_\ell u\in
        V_{\ell-1}$, gives
    \begin{gather*}
        A_\ell(T_0u,v)
        = C_{\mathrm{pr}}^{-2}\, A_\ell(\prol{\ell}\mathcal R_\ell u,\, v)
        = C_{\mathrm{pr}}^{-2}\, A_\ell(v,\, \prol{\ell}\mathcal R_\ell u)
        = C_{\mathrm{pr}}^{-2}\, A_{\ell-1}(\mathcal R_\ell v,\, \mathcal R_\ell u),
    \end{gather*}
    and the last expression is symmetric in $u$ and $v$ by the symmetry of $A_{\ell-1}$, so $T_0$ is
    $A_\ell$-self-adjoint; taking $v=u$ shows that it is positive semidefinite. Its stability constant is one:
    $\energy{\prol{\ell}z}_\ell^2 \le C_{\mathrm{pr}}^2\energy{z}_{\ell-1}^2 = \tilde A_0(z,z)$ is
    exactly~\eqref{eq:prolongation}, so $\omega=\widehat\omega=1$ as before. The stable decomposition measured in the
    solver forms follows from~\eqref{eq:stable-split} and~\eqref{eq:stable-split-coarse}: dropping the fine-energy coarse
    term from~\eqref{eq:stable-split},
    \begin{gather*}
        \sum_{j=1}^{J}\energy{v_j}_\ell^2
        + \tilde A_0(\widehat v_0,\widehat v_0)
        \;\le\; \bigl(1+C_{\mathrm{pr}}^2\bigr)\,
        C_{\mathrm{sd}}\,\energy{v}_\ell^2 ,
    \end{gather*}
    so one may take $C_0^2=(1+C_{\mathrm{pr}}^2)\,C_{\mathrm{sd}}(p)$ and
    \cite[Theorem~2.9]{ToselliWidlund05} gives~\eqref{eq:twolevel-redisc}.
\end{proof}

The scope and limitations of the theorem are as follows.
\begin{enumerate}[label=(\alph*)]

    % Scope of the result and the tracked p-dependence.
    \item To our knowledge, this is the first convergence proof for a vertex-patch multigrid smoother for CutFEM with ghost
          penalties. The rate~\eqref{eq:twolevel} is uniform in the mesh size and the cut configuration, while its tracked
          $p$-dependence is superexponential and dominated by $\Theta'_p$; see Section~\ref{sec:p-dependence}.

          % Two-level statement; W-cycle conditional, V-cycle intentionally tested but left open.
    \item The result is two-level. A uniform W-cycle bound follows under an additional smallness assumption on the two-level rate
          (Corollary~\ref{cor:wcycle}), whereas a V-cycle analysis for the non-inherited forms requires BPX-type
          machinery~\cite{BramblePasciakXu91} and remains open. The numerical experiments intentionally test this stronger
          V-cycle and should therefore be read as evidence beyond the scope of the theorem.

          % The implemented sweep order is covered, but the bound is n_c-independent.
    \item Repeating the cut-patch sweep $n_c\ge1$ times preserves~\eqref{eq:twolevel}, because each additional exact subspace
          solve is $A$-non-expansive~\cite[Lemma~2.14]{ToselliWidlund05}. The estimate is independent of $n_c$ and therefore does
          not capture the improvement quantified by the strip contraction of Proposition~\ref{prop:sigma}.
\end{enumerate}

% Motivate the W-cycle consequence of the two-level estimate.
The classical perturbation argument~\cite{Hackbusch85} transfers the two-level rate to a W-cycle under a smallness
assumption. Because the forms are non-inherited, the reference method is the rediscretized variant of
Theorem~\ref{thm:twolevel}(iii).

% Define the W-cycle operators and their contraction numbers.
Let $E^{\mathrm{W}}_\ell$ denote the error propagation operator of the level-$\ell$ W-cycle, consisting of one
multiplicative smoothing sweep and the damped rediscretized coarse correction of Theorem~\ref{thm:twolevel}(iii), with
the coarse problem approximated by two recursive applications of the level-$(\ell-1)$ cycle. Set $E^{\mathrm{W}}_1=0$,
corresponding to an exact solve on the coarsest level, and introduce the contraction numbers
\begin{gather*}
    q_\ell:=\energy{\widetilde E_{\mathrm{TG},\ell}}_\ell,
    \qquad q:=\sup_{\ell\ge2}q_\ell,
    \qquad \varrho_\ell:=\energy{E^{\mathrm{W}}_\ell}_\ell.
\end{gather*}
Whenever $q\le\tfrac14$, we also set
\begin{gather*}
    \varrho_\star:=\tfrac12\bigl(1-\sqrt{1-4q}\bigr).
\end{gather*}

% State the W-cycle convergence result.
\begin{corollary}[W-cycle convergence]\label{cor:wcycle}
    Under the assumptions of Theorem~\ref{thm:twolevel}, the theoretical
    bound~\eqref{eq:twolevel-redisc} gives $q\le\tilde\eta<1$. Moreover, the
    W-cycle contraction numbers obey the recursion
    \begin{gather}\label{eq:wcycle-recursion}
        \varrho_\ell \;\le\; q \;+\; \varrho_{\ell-1}^{\,2}.
    \end{gather}
    Assume in addition that
    \begin{gather}\label{eq:wcycle-assumption}
        q \;\le\; \tfrac14 .
    \end{gather}
    Then
    \begin{gather}\label{eq:wcycle-bound}
        \varrho_\ell \;\le\; \varrho_\star
        \;\le\; 2q \;<\; 1
        \qquad\text{for all }\ell,
    \end{gather}
    a contraction independent of the number of levels, of the mesh size, and
    of the position of $\Gamma$.
\end{corollary}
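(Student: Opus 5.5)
The plan is the classical Hackbusch perturbation argument, written for the rediscretized coarse correction $T_0=C_{\mathrm{pr}}^{-2}\prol{\ell}\mathcal R_\ell$. The first claim, $q\le\tilde\eta<1$, is immediate: \eqref{eq:twolevel-redisc} holds uniformly in $\ell$, so $q_\ell\le\tilde\eta$ and hence $q=\sup_\ell q_\ell\le\tilde\eta$.

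For the recursion \eqref{eq:wcycle-recursion}, write the smoothing sweep as $S:=(I-P_J)\cdots(I-P_1)$, so that $\widetilde E_{\mathrm{TG},\ell}=S(I-T_0)$. In the W-cycle, the exact coarse solution $\mathcal R_\ell u$ is replaced by two applications of the level-$(\ell-1)$ cycle started from zero. This gives the approximation $(I-(E^{\mathrm W}_{\ell-1})^2)\mathcal R_\ell u$, with $E^{\mathrm W}_{\ell-1}$ acting on $V_{\ell-1}$. Hence
\begin{gather*}
E^{\mathrm W}_\ell=\widetilde E_{\mathrm{TG},\ell}+C_{\mathrm{pr}}^{-2}\,S\,\prol{\ell}\,(E^{\mathrm W}_{\ell-1})^2\,\mathcal R_\ell .
\end{gather*}
The first term has norm at most $q$. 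For the second term I will use three facts:
\begin{itemize}
\item[(1)] $S$ is $A_\ell$-non-expansive, since each factor $I-P_j$ is an $A_\ell$-orthogonal projection;
\item[(2)] $\energy{\prol{\ell}z}_\ell\le C_{\mathrm{pr}}\energy{z}_{\ell-1}$ by Lemma~\ref{lem:prolongation};
\item[(3)] $\energy{\mathcal R_\ell u}_{\ell-1}\le C_{\mathrm{pr}}\energy{u}_\ell$.
\end{itemize}
Fact (3) follows from the definition of $\mathcal R_\ell$: testing with $z=\mathcal R_\ell u$ gives $\energy{\mathcal R_\ell u}^2_{\ell-1}=A_\ell(u,\prol{\ell}\mathcal R_\ell u)\le\energy{u}_\ell\,C_{\mathrm{pr}}\energy{\mathcal R_\ell u}_{\ell-1}$. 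Combining (1)--(3), the perturbation term has norm at most $C_{\mathrm{pr}}^{-2}\cdot C_{\mathrm{pr}}\cdot\varrho_{\ell-1}^2\cdot C_{\mathrm{pr}}=\varrho_{\ell-1}^2$. This is exactly why the damping factor $C_{\mathrm{pr}}^{-2}$ is there: it absorbs the two prolongation constants. The triangle inequality then yields \eqref{eq:wcycle-recursion}.

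The key point to check is that the coarse cycle really enters as $(E^{\mathrm W}_{\ell-1})^2$ applied to $\mathcal R_\ell u$, with the same coarse right-hand side. This amounts to checking that the residual restriction in the implemented cycle coincides with $z\mapsto A_\ell(u,\prol{\ell}z)$. It holds because the transfer is the identity on functions, so the restricted residual is precisely this functional.

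For \eqref{eq:wcycle-bound} I argue by induction. The base case is $\varrho_1=0\le\varrho_\star$. If $\varrho_{\ell-1}\le\varrho_\star$, then the recursion gives $\varrho_\ell\le q+\varrho_\star^2=\varrho_\star$, because $\varrho_\star$ is the smaller root of $x^2-x+q=0$; this root is real precisely when $q\le\frac14$. For the remaining estimate, rationalizing gives $\varrho_\star=2q/(1+\sqrt{1-4q})\le 2q$. Finally $2q\le\frac12<1$. All quantities involved are independent of $\ell$, $h_\ell$ and the cut, which gives the level-, mesh- and cut-independence of the contraction.
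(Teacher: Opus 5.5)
Your proposal is correct and follows the paper's proof essentially step by step. It uses the same Hackbusch perturbation identity $E^{\mathrm W}_\ell=\widetilde E_{\mathrm{TG},\ell}+C_{\mathrm{pr}}^{-2}S\,\prol{\ell}(E^{\mathrm W}_{\ell-1})^2\mathcal R_\ell$, the same three bounds (non-expansive sweep, prolongation stability, and $\energy{\mathcal R_\ell u}_{\ell-1}\le C_{\mathrm{pr}}\energy{u}_\ell$, which you get by testing with $z=\mathcal R_\ell u$ where the paper uses the equivalent duality supremum), and the same smaller-fixed-point argument; your explicit induction and the rationalization $\varrho_\star=2q/(1+\sqrt{1-4q})$ are just cleaner renderings of the paper's monotonicity remark and its inequality $1-\sqrt{1-4q}\le4q$.
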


\begin{proof}
    % Perturbation identity with matched coarse operators.
    Write $S_\ell:=(I-P_J)\cdots(I-P_1)$ for the smoothing sweep, so that $\widetilde E_{\mathrm{TG},\ell}=S_\ell(I-T_0)$
    with $T_0=C_{\mathrm{pr}}^{-2}\prol{\ell}\mathcal R_\ell$ as in Theorem~\ref{thm:twolevel}(iii). Started from a zero
    initial guess, two recursive cycles return $\bigl(I-(E^{\mathrm{W}}_{\ell-1})^{2}\bigr)\mathcal R_\ell u$ in place of
    the exact coarse solution $\mathcal R_\ell u$; the recursion thus approximates the \emph{same} coarse operator the
    correction inverts, which is the point of part (iii). Subtracting,
    \begin{gather*}
        E^{\mathrm{W}}_\ell
        \;=\; \widetilde E_{\mathrm{TG},\ell}
        \;+\; C_{\mathrm{pr}}^{-2}\, S_\ell\, \prol{\ell}\,
        (E^{\mathrm{W}}_{\ell-1})^{2}\, \mathcal R_\ell ,
    \end{gather*}
    Hackbusch's perturbation identity \cite[Sect.~7]{Hackbusch85}.

    % The damping cancels the two transfer factors.
    The perturbation term has $A_\ell$-norm at most $\varrho_{\ell-1}^{2}$: the sweep is $A$-non-expansive,
    $\energy{S_\ell}_\ell\le1$; the prolongation costs~\eqref{eq:prolongation}, $\energy{\prol{\ell}z}_\ell \le
        C_{\mathrm{pr}}\energy{z}_{\ell-1}$; and the residual transfer costs the same factor by duality,
    \begin{gather*}
        \energy{\mathcal R_\ell u}_{\ell-1}
        \;=\; \sup_{z\in V_{\ell-1}}
        \frac{A_{\ell-1}(\mathcal R_\ell u, z)}{\energy{z}_{\ell-1}}
        \;=\; \sup_{z\in V_{\ell-1}}
        \frac{A_\ell(u, \prol{\ell} z)}{\energy{z}_{\ell-1}}
        \;\le\; C_{\mathrm{pr}}\, \energy{u}_\ell ,
    \end{gather*}
    so the damping $C_{\mathrm{pr}}^{-2}$ cancels the two transfer factors
    exactly. Taking $A_\ell$-norms gives
    $\varrho_\ell\le q_\ell+\varrho_{\ell-1}^2\le
        q+\varrho_{\ell-1}^2$, which is~\eqref{eq:wcycle-recursion}.

    % Fixed-point argument for the scalar recursion.
    The scalar recursion $x_\ell\le q+x_{\ell-1}^{2}$ with $x_1=0$ is monotone increasing and bounded above by the smaller
    fixed point of $x=q+x^{2}$, which is real precisely when~\eqref{eq:wcycle-assumption} holds; that root
    is~\eqref{eq:wcycle-bound}, and $\varrho_\star\le2q$ follows from $1-\sqrt{1-4q}\le4q$.
\end{proof}

\begin{remark}[on the smallness assumption]\label{rem:wcycle}
    % What the theory guarantees and what it does not.
    Theorem~\ref{thm:twolevel}(iii) guarantees $q\le\tilde\eta<1$ uniformly in the mesh size and the cut, but the
    quantitative bound $\tilde\eta$ does not imply \eqref{eq:wcycle-assumption}: the interaction count $2N_O(d)^2+1$ alone
    places the proven $\tilde\eta$ within a fraction of a percent of one. The assumption is therefore a statement about the
    actual uniform two-level rate $q$, not a consequence of the constants above. Additional smoothing sweeps do not close
    this gap within the abstract framework: $m$ sweeps yield $S_\ell^{\,m}(I-T_0)$, for which $A$-non-expansiveness
    \cite[Lemma~2.14]{ToselliWidlund05} gives again the bound $q$, not $q^{m}$ --- the latter belongs to $m$ repetitions of
    the entire two-level iteration, coarse solve included. The damping $C_{\mathrm{pr}}^{-2}$ of the coarse correction is
    likewise a device of the analysis; in practice the undamped correction is used.
\end{remark}

% Explain the two coloring variants.
Parallel execution colors patches and processes the color classes multiplicatively
\cite[Sect.~2.5.1]{ToselliWidlund05}. The standard $2^d$ geometric coloring is $A_\ell$-orthogonal on fitted meshes,
but ghost faces can couple same-color patches at vertex distance two (Lemma~\ref{lem:interaction}). One may either use
stride-$3$ colors on the ghost-adjacent strip to restore exact orthogonality or retain the standard colors and apply
additive updates within each color, yielding the practical \emph{semi-multiplicative} smoother. We state the exactly
orthogonal variant first.

% Define the coloring and sweep used for the exactly orthogonal variant.
For this variant, the patches whose cells touch a ghost face, namely the cut and collar patches, are colored so that
same-color vertices differ by at least three cells in some coordinate direction; $3^d$ colors suffice. The remaining
interior patches receive the standard $2^d$ geometric colors, with a palette disjoint from that of the cut and collar
patches. The resulting $N^c\le 2^d+3^d$ color classes are processed sequentially after the exact $A_\ell$-orthogonal
coarse correction $P_0$ from Theorem~\ref{thm:twolevel}(ii), while all patch solves within one color are performed
simultaneously. Each solve uses the exact residual and extracted local matrix; strip patches therefore require a
one-cell-wider residual halo. We denote the error-propagation operator of this colored sweep by $E_{\mathrm{col}}$.

\begin{corollary}[parallel execution by coloring]\label{cor:colored}
    % State the convergence estimate for the colored sweep.
    The colored sweep defined above satisfies
    \begin{gather}\label{eq:colored}
        \energy{E_{\mathrm{col}}}_\ell^2
        \;\le\; 1 - \frac{1}{\bigl(2\,(2^d+3^d)^2+1\bigr)\,
            C_{\mathrm{sd}}(p)} \;<\;1,
    \end{gather}
    uniformly in $\ell$ and in the cut configuration.
\end{corollary}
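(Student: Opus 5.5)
The plan is to treat the colored sweep as a multiplicative Schwarz method whose subspaces are the color classes, and then rerun the argument of Theorem~\ref{thm:twolevel}(ii) with the interaction count replaced by the number of colors. Group the patches into color classes $\mathcal C_1,\dots,\mathcal C_{N^c}$ and set $W_c:=\sum_{j\in\mathcal C_c}V_{\ell,j}$. The simultaneous exact patch solves within one color use the exact residual. They therefore coincide with the exact $A_\ell$-orthogonal projection $Q_c$ onto $W_c$, provided the spaces $V_{\ell,j}$, $j\in\mathcal C_c$, are mutually $A_\ell$-orthogonal. Under that orthogonality $Q_c=\sum_{j\in\mathcal C_c}P_j$, and we obtain $E_{\mathrm{col}}=(I-Q_{N^c})\cdots(I-Q_1)(I-P_0)$. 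This is precisely a two-level multiplicative Schwarz operator with exact solvers ($\omega=1$) on the subspaces $V_{\ell,0},W_1,\dots,W_{N^c}$.

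The first step is to verify the orthogonality, and this is the main obstacle. I would argue as in the proof of Lemma~\ref{lem:interaction}. Volume, Nitsche and boundary contributions of $A_\ell(v_j,v_k)$ vanish unless $\operatorname{int}\omega_j\cap\operatorname{int}\omega_k\neq\emptyset$, which requires the vertices to differ by at most one cell in every direction. Ghost terms additionally couple the two patches only if a cell of $\omega_j$ is face-adjacent to a cell of $\omega_k$, which requires a difference of at most $2$ in every direction. Patches with distinct colors need not be separated, so the work is within a color. For the strip palette, two vertices differ by at least $3$ in some coordinate, so no face-adjacency and no overlap is possible. For the interior palette, the $2^d$ coloring gives a difference of at least $2$ in some coordinate, so the interiors are disjoint. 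Moreover, no ghost face touches these patches, because they are exactly the patches not meeting a ghost face; hence the ghost term vanishes too. Since the palettes are disjoint, no strip patch shares a color with an interior patch. I must check carefully that a patch touching no ghost face indeed has $g_\ell(v_j,\cdot)=0$: the support of $v_j$ avoids all ghost faces and their adjacent cells' jump terms. This is the delicate point, and it is where the definition of ``collar'' patches (those whose cells touch a ghost face) is used.

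Next comes the stable decomposition. From Theorem~\ref{thm:additive} take $v=v_0+\sum_j v_j$ and set $w_c:=\sum_{j\in\mathcal C_c}v_j$. By orthogonality within a color, $\energy{w_c}_\ell^2=\sum_{j\in\mathcal C_c}\energy{v_j}_\ell^2$. Therefore $\energy{v_0}_\ell^2+\sum_c\energy{w_c}_\ell^2\le C_{\mathrm{sd}}(p)\energy{v}_\ell^2$ with the same constant $C_0^2=C_{\mathrm{sd}}(p)$.

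Finally, for the strengthened Cauchy--Schwarz matrix among the $W_c$ I use the trivial bound $\varepsilon_{cc'}\le1$, which gives $\rho(\mathcal E)\le\|\mathcal E\|_\infty\le N^c\le2^d+3^d$. Inserting $\omega=\widehat\omega=1$, $C_0^2=C_{\mathrm{sd}}(p)$ and $\rho(\mathcal E)\le2^d+3^d$ into \cite[Theorem~2.9]{ToselliWidlund05}, exactly as in the proof of Theorem~\ref{thm:twolevel}(ii), yields \eqref{eq:colored}. Uniformity in $\ell$ and the cut follows because neither $C_{\mathrm{sd}}(p)$ nor the color count depends on them.
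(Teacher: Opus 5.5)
Your proposal is correct and follows the paper's proof essentially step for step. The shared steps are: $A_\ell$-orthogonality within each color (disjoint interiors and no ghost faces for the interior palette, a separating cell layer for the stride-$3$ strip palette, and disjoint palettes), regrouping the stable decomposition with the same $C_{\mathrm{sd}}(p)$, the bound $\rho(\mathcal E)\le N^c\le 2^d+3^d$, and \cite[Theorem~2.9]{ToselliWidlund05} with $\omega=1$.
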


\begin{proof}
    % Establish orthogonality within each of the disjoint color palettes.
    Same-color interior patches have supports with disjoint interiors, so the volume and Nitsche terms of $A_\ell$ vanish
    on such pairs. Moreover, no cell of an interior patch touches a ghost face, by the definition of the strip palette, so
    $g_\ell$ vanishes on such pairs as well. Same-color strip patches at stride $3$ are separated by a full cell layer and
    thus have disjoint supports with no face-adjacent cells. Patches from the two classes never share a color because their
    palettes are disjoint. Hence, within each color, the restriction of $A_\ell$ to the direct sum is block diagonal, the
    exact solve on the merged subspace is the simultaneous patch solves, and the colored sweep is the multiplicative method
    over the merged subspaces.

    % Regroup the stable decomposition and apply the Schwarz estimate.
    For any decomposition in~\eqref{eq:stable-split}, orthogonality within each color gives
    \begin{gather*}
        \energy{v_0}_\ell^2
        + \sum_c \energy{\sum_{j\in c}v_j}_\ell^2
        = \sum_{j=0}^J \energy{v_j}_\ell^2.
    \end{gather*}
    Thus the stable decomposition holds for the merged family with the same
    constant $C_{\mathrm{sd}}(p)$. The interaction matrix of the
    $N^c$ merged subspaces has spectral radius $\le N^c\le 2^d+3^d$
    \cite[Lemma~2.10]{ToselliWidlund05}. Now apply
    \cite[Theorem~2.9]{ToselliWidlund05} as in
    Theorem~\ref{thm:twolevel}(ii).
\end{proof}

% Define the semi-multiplicative variant and its inexact local form.
For the damped semi-multiplicative variant analyzed here, all patches use the standard $2^d$ colors. After the exact
coarse correction $P_0$, colors are processed multiplicatively and their patches additively; cut and collar updates are
damped by $\theta\in(0,1)$, while the remaining updates are undamped. Since same-color patches share no degrees of
freedom, each additive update is an exact solve for the patch-block-diagonal approximation of $A_\ell$ on the merged
color space, with inexactness measured by the abstract constant $\omega$~\cite[Assumption~2.4]{ToselliWidlund05}. We
denote the resulting error-propagation operator by $E_{\mathrm{semi}}$.

\begin{corollary}[semi-multiplicative sweep]
    \label{cor:hybrid}
    % State the local-stability and convergence estimates.
    Let $c_A$ be the cut-independent coercivity constant in~\eqref{eq:coercive}. The abstract local-stability constant
    satisfies
    \begin{gather}\label{eq:semi-omega}
        \omega \le \overline\omega_\theta
        := \max\bigl\{1,\,(1+c_A^{-1})\theta\bigr\}.
    \end{gather}
    Consequently, for $\theta\le c_A/(1+c_A)$,
    \begin{gather}\label{eq:hybrid}
        \energy{E_{\mathrm{semi}}}_\ell^2
        \;\le\; 1 - \frac{\theta}{\bigl(2\cdot4^d+1\bigr)\,
            C_{\mathrm{sd}}(p)},
    \end{gather}
    uniformly in $\ell$ and the cut configuration; the largest
    admissible damping $\theta=c_A/(1+c_A)$ gives the best bound.
\end{corollary}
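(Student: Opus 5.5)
The plan is to cast the semi-multiplicative sweep as a multiplicative Schwarz method over the $2^d$ merged color spaces $W_c:=\bigoplus_{j\in c}V_{\ell,j}$, preceded by the exact coarse correction $P_0$. Each color is solved inexactly by the patch-block-diagonal form
\begin{gather*}
    \tilde A_c(u,u):=\sum_{j\in c}\theta_j^{-1}A_\ell(u_j,u_j),
    \qquad u=\sum_{j\in c}u_j,
\end{gather*}
with $\theta_j=\theta$ for cut and collar patches and $\theta_j=1$ otherwise. The additive update on color $c$ is then exactly the $\tilde A_c$-solve. The decomposition $u=\sum_j u_j$ is unique because same-color patches share no degree of freedom, so $\tilde A_c$ is well defined. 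I would then invoke \cite[Theorem~2.9]{ToselliWidlund05}, as in Theorem~\ref{thm:twolevel}(ii) and Corollary~\ref{cor:colored}, which needs three ingredients: the local-stability constant $\omega$, the stable-decomposition constant $C_0^2$ in the solver forms, and the interaction bound $\rho(\mathcal E)\le 2^d$ for $2^d$ merged spaces \cite[Lemma~2.10]{ToselliWidlund05}.

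\emph{Step 1 (local stability, \eqref{eq:semi-omega}).} For $u\in W_c$, expand $A_\ell(u,u)=\sum_jA_\ell(u_j,u_j)+\sum_{j\ne k}A_\ell(u_j,u_k)$. Same-color patches have supports with disjoint interiors, so the volume and Nitsche parts of the cross terms vanish, and only ghost cross terms $g_\ell(u_j,u_k)$ survive. These arise only across a face separating a cell of $\omega_j$ from a cell of $\omega_k$, which is a ghost face and hence involves strip patches only. On each such face at most two same-color patches meet, so positivity of $g_{\ell,F}$ gives $g_{\ell,F}(u_j+u_k,u_j+u_k)\le 2\bigl(g_{\ell,F}(u_j,u_j)+g_{\ell,F}(u_k,u_k)\bigr)$. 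Hence the cross terms are bounded by $\sum_{j\,\text{strip}}g_\ell(u_j,u_j)\le c_A^{-1}\sum_{j\,\text{strip}}A_\ell(u_j,u_j)$, using $g_\ell\le N_\ell^2\le c_A^{-1}A_\ell$ from~\eqref{eq:coercive}. Splitting interior patches, which contribute with weight $1=\theta_j$, from strip patches, which contribute $(1+c_A^{-1})A_\ell(u_j,u_j)=(1+c_A^{-1})\theta\,\theta_j^{-1}A_\ell(u_j,u_j)$, gives $A_\ell(u,u)\le\max\{1,(1+c_A^{-1})\theta\}\,\tilde A_c(u,u)$, which is~\eqref{eq:semi-omega}.

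\emph{Step 2 (decomposition and conclusion).} Take the decomposition of Theorem~\ref{thm:additive} and group it by colors. Then
\begin{gather*}
    \energy{v_0}^2_\ell+\sum_c\tilde A_c\Bigl(\sum_{j\in c}v_j,\sum_{j\in c}v_j\Bigr)
    \le\theta^{-1}\sum_{j=0}^J\energy{v_j}^2_\ell
    \le\theta^{-1}C_{\mathrm{sd}}(p)\,\energy{v}_\ell^2 ,
\end{gather*}
so $C_0^2\le C_{\mathrm{sd}}(p)/\theta$. For $\theta\le c_A/(1+c_A)$, Step 1 gives $\omega\le1$, hence $\widehat\omega=1$ and $2-\omega\ge1$. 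Inserting this together with $\rho(\mathcal E)\le 2^d$ into \cite[Theorem~2.9]{ToselliWidlund05} yields~\eqref{eq:hybrid}. The bound $1-\theta/\bigl((2\cdot 4^d+1)C_{\mathrm{sd}}\bigr)$ is monotone increasing in the reciprocal of $\theta$ and is restricted to $\theta\le c_A/(1+c_A)$, so the endpoint gives the best bound.

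The main obstacle is Step 1, specifically the ghost cross-term counting. One must check that a ghost face is adjacent to at most two same-color patches, and that it never couples an interior patch whose updates are undamped. The latter holds because an interior patch has no cell touching a ghost face, by the definition of the cut and collar class. If the count of patches per face were larger, say $m$, the factor $1+c_A^{-1}$ would become $1+(m-1)c_A^{-1}$, and I would first try to redo the estimate per face in that form.
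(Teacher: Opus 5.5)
Your proposal is correct and follows the paper's proof essentially step by step. It uses the same patch-block-diagonal solver form $\tilde a_c$ on the dof-disjoint color spaces and the same facewise Cauchy--Schwarz/Young bound on the pure ghost cross terms: each ghost face couples at most one same-color pair, and only cut/collar patches. It then regroups~\eqref{eq:stable-split} by color to get $C_0^2=\theta^{-1}C_{\mathrm{sd}}(p)$ and applies \cite[Theorem~2.9]{ToselliWidlund05} with $\rho(\mathcal E)\le 2^d$ and $\omega=\widehat\omega=1$ for $\theta\le c_A/(1+c_A)$, exactly as the paper does.
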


\begin{proof}
    % Interpret the additive color update as an exact solve with an inexact form.
    Within one color the patches are pairwise dof-disjoint (two vertices of a common cell differ by $1$ in some coordinate,
    so they never share a color at stride $2$), hence the color space is the direct sum $V_c=\bigoplus_{j\in c} V_{\ell,j}$
    and the damped additive update is the exact solve with $\tilde a_c(u,u) := \sum_{j\in c}
        \theta_j^{-1}\energy{u_j}_\ell^2$, $\theta_j\in\{\theta,1\}$. For Assumption~2.4 of \cite{ToselliWidlund05} we must
    bound $\energy{\sum_{j\in c} u_j}_\ell^2 \le \omega\,\tilde a_c(u,u)$. Same-color patches have disjoint open supports,
    so the volume, Nitsche and boundary terms are additive and the cross terms are pure ghost penalty. Within a color the
    cells of distinct patches are disjoint, so each ghost face couples at most one pair of patches, and face-wise
    Cauchy--Schwarz with Young's inequality gives
    \begin{gather*}
        \sum_{\substack{i,j\in c\\ i\ne j}} g_\ell(u_i,u_j)
        \;\le\; \sum_{\substack{j\in c\\ \mathrm{ghost\text{-}adj}}}
        g_\ell(u_j,u_j)
        \;\le\; c_A^{-1}
        \sum_{\substack{j\in c\\ \mathrm{ghost\text{-}adj}}}
        \energy{u_j}_\ell^2,
    \end{gather*}
    where the last step follows from $g_\ell(u_j,u_j)\le N_\ell(u_j)^2
        \le c_A^{-1}\energy{u_j}_\ell^2$, and the restricted sums reflect that
    the cross terms are nonzero only for ghost-adjacent patches, where
    $\theta_j=\theta$. Therefore,
    \begin{gather*}
        \energy{\sum_{j\in c} u_j}_\ell^2
        \le \sum_{j\in c}\energy{u_j}_\ell^2
        +c_A^{-1}\sum_{\substack{j\in c\\ \mathrm{ghost\text{-}adj}}}
        \energy{u_j}_\ell^2
        \le \overline\omega_\theta\,\tilde a_c(u,u),
    \end{gather*}
    which proves~\eqref{eq:semi-omega}. The
    merged subspaces number $2^d$, so $\rho(\mathcal E)\le 2^d$
    \cite[Lemma~2.10]{ToselliWidlund05}. As in
    Theorem~\ref{thm:twolevel}(iii), the stable decomposition must be
    measured in the solver forms: regrouping~\eqref{eq:stable-split} by
    color,
    \begin{gather*}
        \energy{v_0}_\ell^2 + \sum_c \tilde a_c(v_c,v_c)
        = \energy{v_0}_\ell^2
        + \sum_{j=1}^{J} \theta_j^{-1}\energy{v_j}_\ell^2
        \le \theta^{-1} C_{\mathrm{sd}}(p)\,\energy{v}_\ell^2,
    \end{gather*}
    Thus one may take $C_0^2 = \theta^{-1}C_{\mathrm{sd}}(p)$; the damping
    required for local stability introduces the factor $\theta^{-1}$ in the
    decomposition constant.
    \cite[Theorem~2.9]{ToselliWidlund05} with
    $\overline\omega_\theta=\widehat\omega=1$ for
    $\theta\le c_A/(1+c_A)$ then gives~\eqref{eq:hybrid}.
\end{proof}

\begin{remark}[elliptic ghost-free part]\label{rem:psd-damping}
    % Recover a coercivity-free damping when the ghost-free form is elliptic.
    The dependence of the damping on the coercivity constant is the price of small cut cells. If $\gamma_D$ is large enough
    that the ghost-free part of $A_\ell$ is elliptic on the given configuration, then $g_\ell(v,v)\le\energy{v}_\ell^2$
    with constant one, the argument above gives $\omega\le\max\{1,2\theta\}$, and $\theta=\tfrac12$
    yields~\eqref{eq:hybrid} with $\theta=\tfrac12$: the coercivity constant drops out. Such a $\gamma_D$ is not
    cut-uniform, since it grows without bound as cut cells degenerate, which is why the corollary argues through the global
    coercivity~\eqref{eq:coercive} instead.
\end{remark}

\begin{remark}[undamped sweep]\label{rem:undamped}
    % Explain why the cut-uniform estimate does not cover the undamped sweep.
    At $\theta=1$, estimate~\eqref{eq:semi-omega} gives $\overline\omega_1=1+c_A^{-1}$, which is not guaranteed to be below
    the threshold $2$ in \cite[Theorem~2.9]{ToselliWidlund05}. Thus the estimate does not prove convergence of the undamped
    semi-multiplicative sweep, but it does not prove divergence either. An undamped result would require a sharper
    cut-uniform spectral bound on the within-color ghost coupling; establishing or disproving such a bound lies beyond the
    stable-decomposition theory used here. All semi-multiplicative experiments in this paper use this undamped choice
    $\theta=1$ and are reported as numerical evidence rather than as instances of Corollary~\ref{cor:hybrid}.
\end{remark}

% Illustrate the level independence of the two smoothing sweeps.
\paragraph{Numerical verification.}
Table~\ref{tab:level-convergence} reports both sweeps for the settings of Section~\ref{sec:smoother}. At fixed degree,
the counts are nearly level-independent through $\Q_3$. The finest two-dimensional results drift at $\Q_4$ and $\Q_5$,
whereas the available three-dimensional results are flat but cover fewer levels. The two sweeps perform similarly,
while the increase in iteration counts with $p$, particularly in three dimensions, motivates the study in
Section~\ref{sec:p-dependence}.

% Report the GMRES iteration counts across levels for both sweep variants in two and three dimensions.
%% Data: numerical/calc/sweep_level_convergence.sh and
%% numerical/calc/sweep_level_convergence_3d.sh. Both use gamma = 0.1,
%% n_c = 2, full-residual patches, tensor-product ghost penalty, and
%% gamma_D = 5p(p+1). Missing entries exceed memory because patch inverses
%% are stored densely. In 3D, the exact solution ranges from 1 to 5/3.
\begin{table}[htbp]
    \centering
    \caption{GMRES iteration counts for the $d$-dimensional ball, across mesh
        levels and polynomial degrees, at $\gamma=0.1$ with $n_c=2$ passes over
        the cut patches, the full-residual patch spaces, and symmetric Nitsche at
        $\gamma_D=5(p+1)p$ and unit relaxation. The left subtable uses the
        semi-multiplicative sweep, the right one the multiplicative sweep.
        Entries marked \texttt{---} are beyond the memory of the test machine.}
    \label{tab:level-convergence}
    \begin{subtable}[t]{0.49\textwidth}
        \centering
        \caption{Semi-multiplicative sweep.}
        \setlength{\tabcolsep}{4pt}
        \begin{tabular}{clccccc}
            \toprule
                                  & $L$ & $\Q_1$ & $\Q_2$ & $\Q_3$ & $\Q_4$ & $\Q_5$ \\
            \midrule
            \multirow{5}{*}{$2D$} & $4$ & $6$    & $5$    & $6$    & $13$   & $26$   \\
                                  & $5$ & $6$    & $5$    & $7$    & $15$   & $32$   \\
                                  & $6$ & $6$    & $4$    & $6$    & $13$   & $27$   \\
                                  & $7$ & $6$    & $5$    & $8$    & $15$   & $35$   \\
                                  & $8$ & $6$    & $5$    & $9$    & $21$   & $50$   \\
            \midrule
            \multirow{3}{*}{$3D$} & $3$ & $6$    & $7$    & $12$   & $26$   & $57$   \\
                                  & $4$ & $6$    & $7$    & $12$   & $26$   & ---    \\
                                  & $5$ & $6$    & $7$    & $13$   & ---    & ---    \\
            \bottomrule
        \end{tabular}
    \end{subtable}%
    \hfill
    \begin{subtable}[t]{0.49\textwidth}
        \centering
        \caption{Multiplicative sweep.}
        \setlength{\tabcolsep}{4pt}
        \begin{tabular}{clccccc}
            \toprule
                                  & $L$ & $\Q_1$ & $\Q_2$ & $\Q_3$ & $\Q_4$ & $\Q_5$ \\
            \midrule
            \multirow{5}{*}{$2D$} & $4$ & $6$    & $4$    & $7$    & $16$   & $30$   \\
                                  & $5$ & $6$    & $5$    & $7$    & $15$   & $30$   \\
                                  & $6$ & $6$    & $4$    & $6$    & $12$   & $26$   \\
                                  & $7$ & $6$    & $5$    & $7$    & $15$   & $33$   \\
                                  & $8$ & $6$    & $5$    & $9$    & $19$   & $45$   \\
            \midrule
            \multirow{3}{*}{$3D$} & $3$ & $6$    & $6$    & $11$   & $25$   & $55$   \\
                                  & $4$ & $6$    & $7$    & $12$   & $25$   & ---    \\
                                  & $5$ & $6$    & $7$    & $13$   & ---    & ---    \\
            \bottomrule
        \end{tabular}
    \end{subtable}
\end{table}
\FloatBarrier

%%%%%%%%%%%%%%%%%%%%%%%%%%%%%%%%%%%%%%%%%%%%%%%%%%%%%%%%%%%%
\section{The $p$-dependence of the method}
\label{sec:p-dependence}
%%%%%%%%%%%%%%%%%%%%%%%%%%%%%%%%%%%%%%%%%%%%%%%%%%%%%%%%%%%%

% Relate the tracked degree dependence to the baseline experiment.
The two-level estimate is uniform in the level and the cut configuration, but its proven dependence on the polynomial
degree is superexponential. For the standard derivative-jump weights with a fixed global multiplier, the bounds
assembled in Theorem~\ref{thm:additive} give
\begin{gather}\label{eq:csd-asymptotic}
    C_{\mathrm{sd}}(p) \;\le\; \exp\bigl(2p\log p+O(p)\bigr),
    \qquad
    1-\energy{E_{\mathrm{TG}}}_\ell^2
    \;\gtrsim\; \exp\bigl(-2p\log p-O(p)\bigr).
\end{gather}
The leading term comes from the ghost-energy comparison constant
$\Theta'_p=(Cp)^{2p}e^{O(p)}$, rather than from the polynomial trace and Markov constants or the exponential transfer
factors $\Xi_p$ and $Z_p$; see Remark~\ref{rem:csd-parameters}. Table~\ref{tab:baseline} tests whether this loss is
visible in the exact sweep.

% Report the baseline degree dependence.
\begin{table}[htbp]
    \centering
    \caption{Iteration counts versus $p$ on the $L=5$ circle hierarchy for the exact multiplicative vertex-patch
        smoother with the classical Dirichlet patch spaces, $n_c=2$, $\gamma=0.1$, and unit relaxation. $>\!200$ denotes failure to
        reach it within $200$ iterations.}
    \label{tab:baseline}
    \begin{tabular}{lccccccc}
        \toprule
        element            & $\Q_1$ & $\Q_2$ & $\Q_3$ & $\Q_4$ & $\Q_5$   & $\Q_6$   & $\Q_7$   \\
        \midrule
        stationary V-cycle & $7$    & $6$    & $18$   & $92$   & $>\!200$ & $>\!200$ & $>\!200$ \\
        GMRES              & $6$    & $5$    & $9$    & $20$   & $39$     & $94$     & $>\!200$ \\
        \bottomrule
    \end{tabular}
\end{table}
% Compare the measurements with the tracked worst-case bound.
$\Q_1$ and $\Q_2$ require at most six GMRES iterations, whereas from $\Q_3$ onward the GMRES count roughly doubles per degree and the
stationary V-cycle reaches the iteration cap at $\Q_5$. Over $p=3,\ldots,6$, the data appear exponential but cannot
distinguish this growth from the onset of~\eqref{eq:csd-asymptotic}. Since fitted vertex-patch Schwarz methods are
$p$-robust~\cite{Pavarino94additive}, we analyze the cut-strip contraction separately and assess whether
$\Theta'_p$ describes it sharply.

\subsection{Contraction of the boundary-strip sweep}

% Define the boundary-strip error operator.
Let
\begin{gather}\label{eq:strip-patches}
    \mathbb J_c := \{\, j :\ \omega_j \text{ contains a cut cell} \,\}
\end{gather}
be the \emph{strip patches}, $V_c^\Sigma := \sum_{j\in\mathbb J_c} V_{\ell,j}$ their
combined span. Fix an ordering $\mathbb J_c=\{j_1,\ldots,j_m\}$ and define
\begin{gather*}
    E_{\mathrm{cut}} := (I-P_{j_m})\cdots(I-P_{j_1})
\end{gather*}
as the error operator of one exact multiplicative sweep in this ordering. Each $P_j$ maps $V_c^\Sigma$ into itself,
so $E_{\mathrm{cut}}$ preserves $V_c^\Sigma$ and fixes its $A$-orthogonal complement. With the ordering and
discretization parameters understood, define
\begin{gather}\label{eq:sigma-def}
    \sigma_\ell(p) := \bigl\|E_{\mathrm{cut}}\big|_{V_c^\Sigma}\bigr\|_A .
\end{gather}

% State the strip-contraction result with its parameter dependence.
\begin{proposition}[strip contraction]\label{prop:sigma}
    Under the assumptions of Theorem~\ref{thm:additive}, for every prescribed
    ghost weight $\gamma=\gamma(p)$ and admissible Nitsche parameter
    $\gamma_D=\gamma_D(p)$ there is
    $\sigma(p,\gamma,\gamma_D)<1$, independent of the level $\ell$, of the
    position of $\Gamma$ relative to the mesh, and of the chosen sweep ordering,
    such that
    \begin{gather*}
        \sigma_\ell(p)\le\sigma(p,\gamma,\gamma_D).
    \end{gather*}
    For the prescribed parameter choices, we abbreviate the right-hand side by
    $\sigma(p)$.
\end{proposition}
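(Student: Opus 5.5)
We would prove the bound as a one-level multiplicative Schwarz estimate on the invariant subspace $V_c^\Sigma$, with no coarse space, using the abstract theorem \cite[Theorem~2.9]{ToselliWidlund05} exactly as in Theorem~\ref{thm:twolevel}(ii). With exact patch solves, $\omega=\widehat\omega=1$. The strengthened Cauchy--Schwarz constant is bounded by Lemma~\ref{lem:interaction}, since the strip patches form a subfamily of all patches and $\rho(\mathcal E)\le N_O(d)$. The theorem then gives, for every ordering, $\sigma_\ell(p)^2\le 1-1/\bigl((2N_O(d)^2+1)C_{\Sigma}\bigr)$. The ordering-independence in the statement comes from the fact that the abstract bound depends on $\rho(\mathcal E)$ only. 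Thus everything reduces to a \emph{local stable decomposition} on $V_c^\Sigma$: every $v\in V_c^\Sigma$ must split as $v=\sum_{j\in\mathbb J_c}v_j$ with $v_j\in V_{\ell,j}$ and $\sum_j\energy{v_j}_\ell^2\le C_\Sigma\energy{v}_\ell^2$, where $C_\Sigma$ is independent of $\ell$ and the cut.

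The decomposition we would use is the nodal splitting of Step~2 of Theorem~\ref{thm:additive}, applied to $v$ itself rather than to $v-\Pi_{\ell-1}v$. Every degree of freedom carried by $v\in V_c^\Sigma$ is updated by some strip patch, so $j(i)$ can be chosen in $\mathbb J_c$. Step~3 of that proof, with \eqref{eq:nodal-stable}, the cut trace \eqref{eq:cut-trace} and Lemma~\ref{lem:ghostchain}(b), then gives
\[
  \sum_j N_\ell(v_j)^2\le C\,C_{\mathrm{GL}}(1+M_p+G_p)(1+M_p)\,h^{-2}\|v\|^2_{0,\Omega_\ell}.
\]
No coarse interpolant is available to absorb $h^{-2}\|v\|^2_0$, so this factor must be bounded by $N_\ell(v)^2$ using a Friedrichs-type inequality on $V_c^\Sigma$. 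The argument relies on the fact that $v$ is supported in the union $\Sigma$ of strip patches, a layer of width $O(h)$ around $\Gamma$.

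This Friedrichs inequality $h^{-2}\|v\|^2_{0,\Sigma}\le C\,N_\ell(v)^2$ is the main obstacle. Our first attempt is to split $\Sigma$ into $O(1)$-cell columns transverse to $\Gamma$ using Lemma~\ref{lem:geom}(iii)--(iv). The boundary of the support of $v$ has an inner part, where $v$ vanishes on the boundary of the strip union (nodes outside strip patches are not in any $V_{\ell,j}$ with $j\in\mathbb J_c$), and the physical part $\Gamma$, where $h^{-1}\|v\|^2_{0,\Gamma}$ is controlled. A one-dimensional Poincar\'e inequality along chains of bounded length then bounds $\|v\|_{0,\cell}^2$ by $h^2|v|^2_1$ on the chain plus the boundary data. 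For parts of the support lying in $\Omega_\ell\setminus\Omega$, we pass $|v|_{1}$ back to $\Omega$ through the extension estimate \eqref{eq:ghost-extension} and transfer $L^2$ mass along the chains of Lemma~\ref{lem:geom}(i) with Lemma~\ref{lem:transfer}, at the price $\Xi_p(p+1)\gamma^{-1}$, exactly as in Step~4 of Proposition~\ref{prop:quasi}. The delicate point is to confirm that each strip patch actually reaches the zero Dirichlet boundary of the strip union within a bounded number of cells. Patches touching the fictitious boundary $\partial\Omega_\ell$ may not see any homogeneous condition there in the full-residual variant. In that case we instead use the $\Gamma$ term together with the extension transfer.

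Once the Friedrichs bound holds, we convert to energy norms with \eqref{eq:coercive} and get $C_\Sigma\le C(p,\gamma)\,C_A/c_A$, which depends on $\gamma_D$ through $C_A$. The resulting $\sigma(p,\gamma,\gamma_D)<1$ is then uniform in $\ell$, in the cut position, and in the ordering.
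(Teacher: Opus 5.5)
Your proposal is correct and follows the paper's proof essentially step for step: a nodal splitting of $v$ itself over the strip patches with no coarse component, and a Friedrichs inequality on the $O(h)$ band. That inequality splits into a physical part, handled by a one-dimensional argument transverse to $\Gamma$, and a fictitious part, handled by ghost-chain transfer through Lemma~\ref{lem:transfer} at the price $\Xi_p(p+1)\gamma^{-1}$; the restricted interaction bound and the multiplicative Schwarz theorem on $V_c^\Sigma$ then conclude. The ``delicate point'' you raise does not arise. The paper integrates along inward normal fibers of length $4\sqrt d\,h$, whose inner endpoints lie outside $\Omega_c\subset\{\operatorname{dist}(\cdot,\Gamma)\le 3\sqrt d\,h\}$, where every $v\in V_c^\Sigma$ vanishes, so neither patch variant needs a homogeneous condition on $\partial\Omega_\ell$; your fallback via the $\Gamma$-trace term of $N_\ell$ would also work.
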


% Summarize the proof mechanism, scope, and repeated-sweep consequence.
The proof, given in Appendix~\ref{app:sigma-proof}, combines a Friedrichs inequality on the $O(h_\ell)$ band around
$\Gamma$, which supplies the low-frequency control that the coarse space provides globally, with the stable-splitting
machinery of Theorem~\ref{thm:additive} restricted to the strip patches. The result is cut- and level-uniform for every
fixed degree, but gives neither a sharp rate nor a polynomial lower bound for $1-\sigma(p)$. After $n_c$ cut sweeps,
\begin{gather*}
    \bigl\|E_{\mathrm{cut}}^{\,n_c}\big|_{V_c^\Sigma}\bigr\|_A
    \;\le\; \sigma_\ell^{\,n_c}.
\end{gather*}
Because a product of $A$-orthogonal projections need not be normal, the inequality need not be an identity. Moreover,
complete-solver iteration counts do not isolate $E_{\mathrm{cut}}$, so the effective rates inferred below are
diagnostic estimates rather than bounds for $\sigma_\ell(p)$.

\subsection{Scaling of the strip-sweep count}

% Motivate a diagnostic estimate of the required sweep count.
The derived bound on $\sigma(p)$ is not sharp. For any candidate per-sweep rate $s\in(0,1)$, the number of repetitions
that makes the modeled pre- and post-smoothing factor $s^{2n_c}$ no larger than a reference interior/coarse rate
$\rho_0$ is
\begin{gather}\label{eq:nc-balance}
    n_c^\star(s) \;=\; \Bigl\lceil \frac{\ln(1/\rho_0)}{2\ln(1/s)} \Bigr\rceil .
\end{gather}
We set $\rho_0=10^{-8/6}$ from the low-degree counts and infer $\sigma_{\mathrm{eff}}(p)$ from the $n_c=3$ run of each
degree. At $\Q_6$, for example, this calibration gives $\sigma_{\mathrm{eff}}\approx0.95$ and predicts
$n_c^\star\approx30$. This prediction agrees with the observed count at $\Q_4$ but underestimates the
required work at $\Q_5$--$\Q_7$. Both sweeps behave similarly, with the multiplicative variant slightly slower at high
degree.
%% Both subtables measured at gamma = 0.1, plain Burman (no gamma factors),
%% n levels = 6 (paper L=5), symmetric Nitsche (nitsche sign = 1,
%% nitsche parameter = 5, i.e. gamma_D = 5(p+1)p), full residual patches,
%% tensor-product ghost penalty.
%% Script: numerical/calc/sweep_nc_law.sh
\begin{table}[htbp]
    \centering
    \caption{GMRES iteration counts versus the number $n_c$ of cut-patch sweeps on the $L=5$ circle hierarchy, with
        $\gamma=0.1$, unit relaxation ($\theta=1$ for the semi-multiplicative sweep), the tensor-product ghost penalty with standard weights, full-residual patch spaces, and symmetric
        Nitsche parameter $\gamma_D=5p(p+1)$. Entries $>200$ exceed the iteration cap.}
    \label{tab:nc-law}
    \begin{subtable}[t]{0.49\textwidth}
        \centering
        \caption{Semi-multiplicative sweep.}
        \setlength{\tabcolsep}{3pt}
        \begin{tabular}{lccccccc}
            \toprule
                  & \multicolumn{7}{c}{GMRES iterations at $n_c$}                                                       \\
            \cmidrule(l){2-8}
            $n_c$ & $\Q_1$                                        & $\Q_2$ & $\Q_3$ & $\Q_4$ & $\Q_5$ & $\Q_6$ & $\Q_7$ \\
            \midrule
            $1$   & $7$                                           & $6$    & $10$   & $20$   & $44$   & $97$   & $>200$ \\
            $2$   & $6$                                           & $5$    & $7$    & $15$   & $32$   & $70$   & $164$  \\
            $3$   & $6$                                           & $4$    & $6$    & $12$   & $26$   & $59$   & $136$  \\
            $15$  & $6$                                           & $4$    & $4$    & $5$    & $12$   & $28$   & $66$   \\
            $30$  & $6$                                           & $4$    & $4$    & $4$    & $9$    & $21$   & $48$   \\
            \bottomrule
        \end{tabular}
    \end{subtable}%
    \hfill
    \begin{subtable}[t]{0.49\textwidth}
        \centering
        \caption{Multiplicative sweep.}
        \setlength{\tabcolsep}{3pt}
        \begin{tabular}{lccccccc}
            \toprule
                  & \multicolumn{7}{c}{GMRES iterations at $n_c$}                                                       \\
            \cmidrule(l){2-8}
            $n_c$ & $\Q_1$                                        & $\Q_2$ & $\Q_3$ & $\Q_4$ & $\Q_5$ & $\Q_6$ & $\Q_7$ \\
            \midrule
            $1$   & $7$                                           & $6$    & $10$   & $21$   & $42$   & $99$   & $>200$ \\
            $2$   & $6$                                           & $5$    & $7$    & $15$   & $30$   & $71$   & $181$  \\
            $3$   & $6$                                           & $4$    & $6$    & $12$   & $25$   & $58$   & $149$  \\
            $15$  & $6$                                           & $4$    & $4$    & $5$    & $11$   & $27$   & $70$   \\
            $30$  & $6$                                           & $4$    & $4$    & $4$    & $8$    & $20$   & $51$   \\
            \bottomrule
        \end{tabular}
    \end{subtable}
\end{table}
% Interpret the sweep-count measurements without identifying them with the operator norm.
Repeated cut sweeps reduce the high-degree deterioration, but the required count grows rapidly with $p$. Their work is
confined to the $O(h^{-(d-1)})$ strip and is therefore asymptotically smaller than the $O(h^{-d})$ interior work. The
data are consistent with Remark~\ref{rem:ghostchain}, but neither identify a superexponential rate nor establish
$\Theta'_p$ as its mechanism. A sharp a priori estimate of the $p$-dependence remains open.

\subsection{Limits of ghost-penalty shedding}\label{sec:cut-adaptive}

% Introduce the two modifications of the ghost weights.
The robustness of vertex-patch Schwarz methods on fitted meshes~\cite{Pavarino94additive} suggests that the observed
degree dependence is associated with the cut-strip terms rather than with the patch decomposition alone. We therefore
examine the ghost weights. They may be varied through either the global multiplier $\gamma\in(0,1]$ in
\eqref{eq:intro-ghost} or positive derivative-order weights $\gamma_k h_F^{2k-1}/(k!)^2$. Remark~\ref{rem:duality}
analyzes the latter choice; the experiments below vary the global multiplier.

% State the trade-off for the global multiplier.
Lowering $\gamma$ improves the smoothing comparison $\gamma\Theta'_p$ in Lemma~\ref{lem:ghostchain}(b), but degrades
the stability bounds through $(p+1)\gamma^{-1}$ in Proposition~\ref{prop:cutfem} and the quasi-interpolation
constant~\eqref{eq:CQ}. The objective is to determine how the smallest stable global weight depends on $p$. Geometric
face weights based on the cut fraction~\cite{FreiKnokeSteinbachWenskeWick24} are not considered here. The opposite end
of the same design variable is treated in~\cite{BurmanHansboLarson26Locking}: with the penalty localized so that
locking is avoided, the weight may be increased without bound, and the limit enforces the algebraic constraints that
characterize the discrete-extension spaces. The question here is how far the weight may be decreased instead.

% Show that redistributing the derivative-order weights cannot improve the assembled bounds.
\begin{remark}[profile-product lower bound]\label{rem:duality}
    For positive derivative-order multipliers $\gamma=(\gamma_1,\dots,\gamma_p)$, set
    \begin{gather}\label{eq:profile-constants}
        S(\gamma) := \sum_{k=1}^p
        \gamma_k\,\frac{(C_M^2p^4)^{k-1}}{(k!)^2},
        \qquad
        B(\gamma) := \sum_{k=1}^p \gamma_k^{-1}.
    \end{gather}
    Up to polynomial factors, $S(\gamma)$ controls the ghost-energy upper bound and $B(\gamma)$ the extension and
    coercivity estimates. Cauchy--Schwarz gives
    \begin{gather}\label{eq:duality}
        S(\gamma)B(\gamma)
        \;\ge\;
        \left(
        \sum_{k=1}^{p}\frac{(C_Mp^2)^{k-1}}{k!}
        \right)^2
        \;\ge\;
        \frac{(C_Mp)^{2p}}{C_M^2p^4}.
    \end{gather}
    The first inequality is sharp for $\gamma_k\propto k!(C_Mp^2)^{-(k-1)}$; the second follows from the term $k=p$
    and $p!\le p^p$. Thus, no positive profile makes both sides polynomial in the present analysis, although this does
    not exclude sharper estimates or a polynomial true strip rate.
\end{remark}

% Summarize the effect of lowering the global ghost weight.
Table~\ref{tab:wmax} shows that decreasing $\gamma$ substantially reduces the iteration counts for $p\ge3$, and the
weights at which low counts first occur decrease as $p$ increases; no comparable trend appears for $\Q_1$ and $\Q_2$.
Since GMRES requires nonsingularity rather than definiteness, these runs do not establish coercivity, and the small
weights lie outside the range certified by Proposition~\ref{prop:cutfem} (Remark~\ref{rem:admissible-gammaD}). The mode
analysis below compares the scale of the observed improvement with the coercivity constraint.
\begin{table}[htbp]
    \centering
    %% Source: numerical/calc/sweep_p_gamma.sh,
    %% results/sweep_p_gamma_20260818_130236.
    \caption{GMRES iterations as the global ghost weight $\gamma$ decreases on the $L=5$ circle hierarchy, using the
        multiplicative smoother with $n_c=1$, unit relaxation, full-residual patch spaces, and symmetric Nitsche parameter
        $\gamma_D=5p(p+1)$. }
    \label{tab:wmax}
    \begin{tabular}{lcccccccccc}
        \toprule
        $\gamma$ & $1$  & $10^{-1}$ & $10^{-2}$ & $10^{-3}$ & $10^{-4}$ & $10^{-5}$ & $10^{-6}$ & $10^{-7}$ & $10^{-8}$ & $10^{-9}$ \\
        \midrule
        $\Q_1$   & $10$ & $7$       & $17$      & $8$       & $12$      & $8$       & $9$       & $9$       & $9$       & $9$       \\
        $\Q_2$   & $11$ & $6$       & $4$       & $13$      & $9$       & $5$       & $13$      & $7$       & $6$       & $6$       \\
        $\Q_3$   & $19$ & $10$      & $7$       & $5$       & $6$       & $4$       & $4$       & $4$       & $4$       & $4$       \\
        $\Q_4$   & $42$ & $21$      & $13$      & $9$       & $6$       & $4$       & $4$       & $3$       & $3$       & $3$       \\
        $\Q_5$   & $95$ & $42$      & $25$      & $17$      & $12$      & $8$       & $7$       & $5$       & $4$       & $3$       \\
        \bottomrule
    \end{tabular}
\end{table}
% Two single-cell mode calculations set the competing scales.
The degree dependence of the weights at which lower counts first occur in Table~\ref{tab:wmax} is consistent with two
explicit mode calculations in an aligned two-cell model. A degree-$p$ mode sets the scale at which the penalty becomes
spectrally visible, while a piecewise-linear mode sets the nonnegativity scale of the symmetric model form. The two
scales become incompatible as $p$ grows. These calculations are diagnostic: they do not provide a coercivity threshold
for the assembled operator.

\begin{proposition}[incompatible scales in the two-cell model]\label{prop:scales}
    Let $\cell$ be a cell of size $h$ cut by a plane parallel to a face, with inside part a slab of relative thickness
    $\kappa\in(0,1)$ measured from the ghost face $F$ opposite the interface, an uncut neighbour across $F$, and the
    boundary segment $\Gamma\cap\cell$ parallel to $F$. Let $A_\ell^{(2)}$ denote the corresponding two-cell form,
    containing the physical and Nitsche terms on these cells and the ghost penalty on the single face $F$.
    \begin{itemize}
        \item[(i)] (\emph{visibility scale}) For the mode $\xi(x)=c\,x^p\in\Q_p(\cell)$, with $x$ the distance to $F$,
              normalized to unit full-cell energy ($c^2=(2p-1)/p^2$; tangential directions integrate out), the physical
              energy is $|\xi|^2_{1,\cell\cap\Omega}=\kappa^{2p-1}$ and the ghost energy is $\gamma c^2$. The two are
              comparable at
              \begin{gather}\label{eq:gamma-visibility}
                  \gamma_{\mathrm{vis}}(p,\kappa) \;\simeq\; \frac{p^2}{2p-1}\,\kappa^{\,2p-1},
              \end{gather}
              the weight below which the penalty is spectrally negligible against the physical energy on this mode and
              above which it dominates it.
        \item[(ii)] (\emph{coercivity obstruction}) For the piecewise $\Q_1$ function $v_\kappa$ that vanishes on the
              neighbour and equals the distance to $F$ on $\cell$,
              \begin{gather}\label{eq:sym-floor-energies}
                  A_\ell^{(2)}(v_\kappa,v_\kappa) \;=\; \bigl(\gamma-\kappa+\gamma_D\kappa^2\bigr)\,h^d,
              \end{gather}
              so, for $\gamma_D>\tfrac12$, nonnegativity of the model form, uniformly over the cut fraction, requires
              \begin{gather}\label{eq:sym-floor}
                  \gamma \;\ge\; \max_{\kappa\in(0,1)} \kappa\,(1-\gamma_D\kappa) \;=\; \frac{1}{4\gamma_D},
              \end{gather}
              attained at the moderate cut fraction $\kappa=1/(2\gamma_D)$, a floor of order $p^{-2}$ at the
              standard Nitsche scaling $\gamma_D\simeq Cp^2$.
        \item[(iii)] (\emph{incompatibility}) At the standard scaling and for any fixed $\kappa\in(0,1)$,
              \begin{gather*}
                  \frac{\gamma_{\mathrm{vis}}(p,\kappa)}{1/(4\gamma_D)}
                  \;\lesssim\; p^{3}\,\kappa^{\,2p-1} \;\longrightarrow\; 0
                  \qquad (p\to\infty):
              \end{gather*}
              no global multiplier reaches the visibility scale of the degree-$p$ mode while retaining cut-uniform
              nonnegativity of the two-cell form.
    \end{itemize}
\end{proposition}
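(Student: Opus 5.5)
The plan is to compute everything explicitly in the aligned model. I place $F$ at $x=0$ and take $\cell=[0,h]\times[0,h]^{d-1}$ with $x$ the normal coordinate. The physical part is then $\{0<x<\kappa h\}$, the interface is $\Gamma\cap\cell=\{x=\kappa h\}$ with outward normal $+e_x$, and the uncut neighbour lies at $x<0$. Tangential directions contribute only the factor $h^{d-1}$ to every term, since all test functions depend on $x$ alone. I will use the symmetric Nitsche form
$$|v|^2_{1,\Omega\cap\cell}-2(\partial_n v,v)_{\Gamma\cap\cell}+\gamma_D h^{-1}\|v\|^2_{0,\Gamma\cap\cell}+g_\ell(v,v),$$
with the ghost penalty on $F$ carrying the weights $\gamma h^{2k-1}/(k!)^2$. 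In every case the neighbour carries the zero function. Because each test function vanishes at $x=0$, it is continuous across $F$ and lies in the model space. Only finitely many jumps survive, and they can be read off from the Taylor coefficients at $x=0$.

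For (i) I work at unit scale ($h=1$, so the tangential factor is $1$) and take $\xi=c\,x^p$. The full-cell energy is $\int_0^1 c^2p^2x^{2p-2}\,dx=c^2p^2/(2p-1)$, and setting it to one gives $c^2=(2p-1)/p^2$. Restricting the same integral to $(0,\kappa)$ gives the physical energy $c^2p^2\kappa^{2p-1}/(2p-1)=\kappa^{2p-1}$. Every jump $\llbracket\partial_n^k\xi\rrbracket$ with $k<p$ vanishes at $x=0$, and the only surviving one is $\llbracket\partial_n^p\xi\rrbracket=c\,p!$. The weight $1/(p!)^2$ cancels the factorial, so the ghost energy is exactly $\gamma c^2$. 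Equating $\gamma c^2$ with $\kappa^{2p-1}$ then gives \eqref{eq:gamma-visibility}, and the dichotomy stated in (i) is just the comparison of these two quantities.

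For (ii) I take $v_\kappa=x$ on $\cell$ and compute the four terms:
\begin{itemize}
\item[(a)] the gradient term is $\kappa h\cdot h^{d-1}=\kappa h^d$;
\item[(b)] since $\partial_n v_\kappa=1$ and $v_\kappa=\kappa h$ on $\Gamma$, the consistency term is $-2\kappa h\cdot h^{d-1}=-2\kappa h^d$;
\item[(c)] the Nitsche penalty is $\gamma_D h^{-1}(\kappa h)^2h^{d-1}=\gamma_D\kappa^2h^d$;
\item[(d)] the only nonzero jump is $\llbracket\partial_n v_\kappa\rrbracket=1$, with weight $\gamma h$, so the ghost term is $\gamma h^d$.
\end{itemize}
Summing gives \eqref{eq:sym-floor-energies}. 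Nonnegativity for all $\kappa\in(0,1)$ means $\gamma\ge\sup_\kappa\kappa(1-\gamma_D\kappa)$. This concave quadratic peaks at $\kappa=1/(2\gamma_D)$, which lies in $(0,1)$ exactly when $\gamma_D>\tfrac12$, and its value there is $1/(4\gamma_D)$. With $\gamma_D\simeq Cp^2$ the floor is of order $p^{-2}$.

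For (iii) I insert $\gamma_D\simeq Cp^2$ into the ratio:
$$4\gamma_D\,\gamma_{\mathrm{vis}}\simeq 4Cp^2\cdot\frac{p^2}{2p-1}\,\kappa^{2p-1}\lesssim p^3\kappa^{2p-1}.$$
This tends to zero for fixed $\kappa<1$, because geometric decay beats polynomial growth. Consequently any $\gamma$ above the nonnegativity floor eventually exceeds $\gamma_{\mathrm{vis}}$, and the incompatibility follows.

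The main obstacle is bookkeeping of conventions rather than analysis. The sign and factor $2$ of the symmetric Nitsche consistency term produce the $-\kappa$ in \eqref{eq:sym-floor-energies}, and the $h$-scaling of each term must come out as a common $h^d$. I would fix these explicitly from \eqref{eq:intro-ghost} and the Nitsche form of the introduction before carrying out the computation.
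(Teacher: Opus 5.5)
Your proposal is correct and follows the paper's proof essentially line by line. It uses the same explicit computation of the physical and order-$p$ ghost energies for $\xi$, the same four-term evaluation of $A_\ell^{(2)}(v_\kappa,v_\kappa)$ with maximization of $\kappa(1-\gamma_D\kappa)$ at $\kappa=1/(2\gamma_D)$, and the same substitution $\gamma_D\simeq Cp^2$ for (iii); your explicit check that both test functions vanish on $F$, and hence lie in the continuous two-cell space, is a small point the paper leaves implicit.
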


\begin{proof}
    (i) All normal jumps of $\xi$ across $F$ of order below $p$ vanish at $x=0$, and the order-$p$ jump equals $p!\,c$,
    so the $k=p$ term of~\eqref{eq:intro-ghost} gives the ghost energy $\gamma c^2$; the physical energy is
    $\int_0^{\kappa} (c\,p\,x^{p-1})^2\,\mathrm dx = \kappa^{2p-1}$ by the normalization of $c$. Equating the two
    gives~\eqref{eq:gamma-visibility}.

    (ii) Only the first-order normal jump of $v_\kappa$ across $F$ is nonzero, and it equals one, so the $k=1$ term
    of~\eqref{eq:intro-ghost} gives $g_\ell(v_\kappa,v_\kappa)=\gamma\,h_F\,|F|=\gamma h^d$. The gradient is the unit
    normal on $\cell$ and zero on the neighbour, so the physical energy is $|\cell\cap\Omega|=\kappa h^d$. On
    $\Gamma\cap\cell$ one has $\partial_n v_\kappa=1$ and $v_\kappa=\kappa h$, with $|\Gamma\cap\cell|=h^{d-1}$, so the
    Nitsche consistency and penalty terms contribute $-2\kappa\,h^d$ and $\gamma_D\kappa^2h^d$. Every term scales as
    $h^d$, which gives~\eqref{eq:sym-floor-energies}; nonnegativity for every $\kappa$ is~\eqref{eq:sym-floor}, the
    right-hand side being maximal at $\kappa=1/(2\gamma_D)$ with value $1/(4\gamma_D)$; the maximizer lies in $(0,1)$
    exactly when $\gamma_D>\tfrac12$, which every standard scaling satisfies.

    (iii) Substitute $\gamma_D\simeq Cp^2$ into the quotient of~\eqref{eq:gamma-visibility}
    and~\eqref{eq:sym-floor}.
\end{proof}

% Qualify the two-cell nonnegativity floor.
Equation~\eqref{eq:sym-floor} is necessary but not sufficient for cut-uniform nonnegativity of the two-cell form. Its
$p^{-2}$ scale contrasts with the exponential decay of~\eqref{eq:gamma-visibility}; bringing the model threshold down
to the visibility scale would require an exponentially large $\gamma_D$ and poor conditioning.

% State the exact vanishing-cut threshold in the two-cell model.
For the two-cell configuration of Proposition~\ref{prop:scales}, with $a_{\kappa,\gamma}$ the one-dimensional
reduction~\eqref{eq:two-cell-1d} of the two-cell form at cut fraction $\kappa$ and $a_{0,\gamma}$ its vanishing-cut
limit~\eqref{eq:two-cell-limit}, Proposition~\ref{prop:spd-limit} of Appendix~\ref{app:spd-floor} gives the exact
threshold
\begin{gather}\label{eq:spd-floor-exact}
    \gamma_{\mathrm{lim}}(p) \;:=\;
    \min\{\gamma:\ a_{0,\gamma}\succeq0\}
    \;=\; \frac{1}{\gamma_D-p^2},
    \qquad \gamma_D>p^2,
\end{gather}
and no positive weight makes the limit form semidefinite when $\gamma_D\le p^2$. For every
$\gamma<\gamma_{\mathrm{lim}}(p)$ the form is indefinite at all sufficiently small cut fractions, so
$\gamma_{\mathrm{lim}}(p)$ is a necessary cut-uniform lower bound for this model, but its sharpness over all cut
fractions remains open. The sampled finite-$\kappa$
thresholds approach this value as $\kappa$ decreases. The $\Q_1$ witness has the correct rate but underestimates the
limit by $4\gamma_D/(\gamma_D-p^2)$; Table~\ref{tab:spd-floor} evaluates both quantities at $\gamma_D=5p(p+1)$.

\begin{table}[htbp]
    \centering
    %% All entries are exact rationals from \eqref{eq:spd-floor-exact}:
    %%   gamma_lim = 1/(gamma_D - p^2) = 1/(p(4p+5)) at gamma_D = 5p(p+1),
    %%   prop bound = 1/(4 gamma_D) = 1/(20 p(p+1)) per prop:scales(ii),
    %%   ratio = 4 gamma_D/(gamma_D - p^2) = 20(p+1)/(4p+5).
    %% The level rows are floor(p, kappa_min(L), gamma_D) computed at 40 digits;
    %% kappa_min from numerical/scripts/circle_min_cut_fraction.py (unit disc in
    %% [-1.21,1.21]^2): 1.536e-2, 8.244e-4, 6.570e-5 at L = 5, 6, 7.
    %% Driver: numerical/scripts/spd_floor_vs_shed_weight.py, on top of the exact
    %% eigensolve in numerical/scripts/symmetric_nitsche_floor.py.
    %% CAVEAT: one ghost face on the cut cell. A 2D cut cell has up to three more,
    %% each adding ghost energy, so these are UPPER bounds for the assembled
    %% operator and "below the floor" means "not certified definite".
    \caption{Ghost-weight thresholds in the two-cell model at $\gamma_D=5p(p+1)$. The realized rows restrict the cut
        fractions to those occurring on the circle at level $L$; the final row gives the two-cell bound from
        Proposition~\ref{prop:scales}(ii). Bold marks cases in which this bound exceeds the realized threshold.}
    \label{tab:spd-floor}
    \begin{tabular}{lccccc}
        \toprule
                                                           & $\Q_1$            & $\Q_2$            & $\Q_3$                & $\Q_4$                & $\Q_5$                \\
        \midrule
        $\gamma_D=5(p+1)p$                                 & $10$              & $30$              & $60$                  & $100$                 & $150$                 \\
        $\gamma_{\mathrm{lim}}=1/(\gamma_D-p^2)$           & $1/9$             & $1/26$            & $1/51$                & $1/84$                & $1/125$               \\
        \midrule
        realized at $L=5$                                  & $9.2\cdot10^{-2}$ & $2.0\cdot10^{-2}$ & $\bf 1.4\cdot10^{-3}$ & $\bf 5.6\cdot10^{-6}$ & $\bf 1.6\cdot10^{-6}$ \\
        realized at $L=6$                                  & $1.1\cdot10^{-1}$ & $3.7\cdot10^{-2}$ & $1.8\cdot10^{-2}$     & $1.1\cdot10^{-2}$     & $6.9\cdot10^{-3}$     \\
        realized at $L=7$                                  & $1.1\cdot10^{-1}$ & $3.8\cdot10^{-2}$ & $2.0\cdot10^{-2}$     & $1.2\cdot10^{-2}$     & $7.9\cdot10^{-3}$     \\
        \midrule
        bound $1/(4\gamma_D)$, Prop.~\ref{prop:scales}(ii) & $2.5\cdot10^{-2}$ & $8.3\cdot10^{-3}$ & $4.2\cdot10^{-3}$     & $2.5\cdot10^{-3}$     & $1.7\cdot10^{-3}$     \\
        \bottomrule
    \end{tabular}
\end{table}

% State the scope of the model threshold.
The realized threshold depends on the smallest cut fraction and approaches~\eqref{eq:spd-floor-exact} as that fraction
decreases. Several $L=5$ runs in Table~\ref{tab:wmax} nevertheless converge below the corresponding two-cell threshold
and produce solution values consistent with the exact solution. Because additional ghost faces add stabilization, the
one-face model does not certify indefiniteness of the assembled form or explain this GMRES behaviour; the usable weight
range and strip contraction rate remain open.

% Test level independence at the two-cell threshold.
\paragraph{Level independence at the threshold weight.}
Table~\ref{tab:shedding-levels} repeats the level sweep of Table~\ref{tab:level-convergence} with the degree-dependent
weight
\begin{gather}\label{eq:gamma-shed}
    \gamma(p) \;=\; \gamma_{\mathrm{lim}}(p) \;=\; \frac{1}{\gamma_D-p^2},
    \qquad \gamma_D = c\,(p+1)p, \quad c=5,
\end{gather}
which is the vanishing-cut semidefiniteness threshold of the two-cell model in~\eqref{eq:spd-floor-exact} and has
order $p^{-2}$. All other discretization parameters agree with Table~\ref{tab:level-convergence}.

% Report level independence and the remaining qualification.
The counts are nearly level-independent through $\Q_3$. The higher degrees retain some finest-level drift in two
dimensions, whereas the available three-dimensional results remain flat over fewer levels. Relative to the fixed weight
$\gamma=0.1$, the counts decrease, particularly at high degree. Since~\eqref{eq:gamma-shed} is the vanishing-cut
threshold only of the two-cell model, these results provide evidence only for the tested configurations.

% Report the level sweep at the threshold weight for both sweeps.
%% 2D source: numerical/calc/sweep_shedding_levels_spd.sh,
%% results/sweep_shedding_levels_spd_20260818_141112.
%% 3D source: numerical/calc/sweep_shedding_levels_spd_3d.sh,
%% results/sweep_shedding_levels_spd_3d_* into results/shed_spd_3d_matrix.txt.
\begin{table}[htbp]
    \centering
    \caption{GMRES iteration counts on the $d$-dimensional ball with $n_c=2$, unit relaxation
        ($\theta=1$ for the semi-multiplicative sweep), full-residual patch spaces,
        $\gamma_D=5p(p+1)$, and the degree-dependent weight~\eqref{eq:gamma-shed} listed in each column.
        \texttt{---} denotes a case exceeding the available memory.}
    \label{tab:shedding-levels}
    \begin{subtable}[t]{0.49\textwidth}
        \centering
        \caption{Semi-multiplicative sweep.}
        \setlength{\tabcolsep}{4pt}
        \begin{tabular}{clccccc}
            \toprule
                                  & $L$              & $\Q_1$ & $\Q_2$ & $\Q_3$ & $\Q_4$ & $\Q_5$ \\
            \midrule
                                  & $\gamma(p)$
                                  & $\tfrac{1}{9}$
                                  & $\tfrac{1}{26}$
                                  & $\tfrac{1}{51}$
                                  & $\tfrac{1}{84}$
                                  & $\tfrac{1}{125}$                                              \\
            \midrule
            \multirow{5}{*}{$2D$} & $4$              & $6$    & $4$    & $4$    & $8$    & $16$   \\
                                  & $5$              & $6$    & $4$    & $5$    & $9$    & $17$   \\
                                  & $6$              & $6$    & $4$    & $5$    & $8$    & $15$   \\
                                  & $7$              & $6$    & $4$    & $5$    & $10$   & $19$   \\
                                  & $8$              & $6$    & $4$    & $6$    & $12$   & $25$   \\
            \midrule
            \multirow{3}{*}{$3D$} & $3$              & $6$    & $6$    & $9$    & $18$   & $36$   \\
                                  & $4$              & $6$    & $6$    & $9$    & $18$   & ---    \\
                                  & $5$              & $6$    & $6$    & $10$   & ---    & ---    \\
            \bottomrule
        \end{tabular}
    \end{subtable}%
    \hfill
    \begin{subtable}[t]{0.49\textwidth}
        \centering
        \caption{Multiplicative sweep.}
        \setlength{\tabcolsep}{4pt}
        \begin{tabular}{clccccc}
            \toprule
                                  & $L$              & $\Q_1$ & $\Q_2$ & $\Q_3$ & $\Q_4$ & $\Q_5$ \\
            \midrule
                                  & $\gamma(p)$
                                  & $\tfrac{1}{9}$
                                  & $\tfrac{1}{26}$
                                  & $\tfrac{1}{51}$
                                  & $\tfrac{1}{84}$
                                  & $\tfrac{1}{125}$                                              \\
            \midrule
            \multirow{5}{*}{$2D$} & $4$              & $6$    & $4$    & $4$    & $8$    & $15$   \\
                                  & $5$              & $6$    & $4$    & $5$    & $9$    & $17$   \\
                                  & $6$              & $6$    & $4$    & $5$    & $8$    & $15$   \\
                                  & $7$              & $6$    & $4$    & $5$    & $9$    & $17$   \\
                                  & $8$              & $6$    & $4$    & $6$    & $12$   & $24$   \\
            \midrule
            \multirow{3}{*}{$3D$} & $3$              & $6$    & $6$    & $9$    & $17$   & $33$   \\
                                  & $4$              & $6$    & $6$    & $9$    & $17$   & ---    \\
                                  & $5$              & $6$    & $6$    & $10$   & ---    & ---    \\
            \bottomrule
        \end{tabular}
    \end{subtable}
\end{table}
\FloatBarrier

%%%%%%%%%%%%%%%%%%%%%%%%%%%%%%%%%%%%%%%%%%%%%%%%%%%%%%%%%%%%

%%%%%%%%%%%%%%%%%%%%%%%%%%%%%%%%%%%%%%%%%%%%%%%%%%%%%%%%%%%%
\section{Conclusion}
\label{sec:conclusion}
%%%%%%%%%%%%%%%%%%%%%%%%%%%%%%%%%%%%%%%%%%%%%%%%%%%%%%%%%%%%

% Uniform two-level convergence on cut meshes and the intentional multilevel test.
At the two-level scale, geometric multigrid with vertex-patch smoothing extends from fitted to cut meshes with a
convergence rate independent of the mesh size and the interface position. Although the level forms are non-inherited,
the injection prolongation is stable in the stabilized energy norm (Lemma~\ref{lem:prolongation}), the vertex-patch
decomposition splits stably with constants independent of the cut (Theorem~\ref{thm:additive}), and the two-level
method contracts uniformly in the mesh size and the position of the interface (Theorem~\ref{thm:twolevel}). Under the
additional smallness condition $q\le\tfrac14$ on the two-level rate, the same conclusion holds for the W-cycle
(Corollary~\ref{cor:wcycle}). The experiments intentionally test the stronger V-cycle, whose analysis remains open.

% The guarantees cover the parallel smoother used in practice.
The analysis also covers both parallel variants, with damping in the semi-multiplicative case. The ghost penalty
couples same-color patches along the boundary strip, so the standard $2^d$ coloring is not exactly multiplicative. With
cut-independent damping, the resulting semi-multiplicative smoother satisfies a cut-independent convergence bound
(Corollary~\ref{cor:hybrid}). A stride-$3$ coloring on the strip instead gives exactly orthogonal color classes
(Corollary~\ref{cor:colored}). The reported semi-multiplicative experiments use the undamped choice and therefore lie
outside Corollary~\ref{cor:hybrid}.

% Dependence on the polynomial degree.
The degree dependence is governed by the boundary-strip contraction $\sigma_\ell(p)$, bounded away from one uniformly
in the level and the cut (Proposition~\ref{prop:sigma}). Repeating the strip sweep, at a relative cost that vanishes
under refinement, mitigates the deterioration, although the diagnostic count from~\eqref{eq:nc-balance} underestimates
the required number of sweeps above $\Q_4$. No derivative-jump profile makes the present bounds polynomial in $p$
(Remark~\ref{rem:duality}). The two-cell analysis identifies incompatible weight scales, but does not provide a
threshold for the assembled operator.

% Open questions.
Two questions remain open. First, the present estimates do not determine whether the $p$-dependence of the true strip
rate $\sigma(p)$ is polynomial. Their superexponential growth is not a proven property of the method, and a sharper
analysis must account for the coupling accumulated along the strip rather than the worst single patch. Second, the
level-uniform W-cycle result requires a smallness assumption on the two-level rate. A V-cycle bound for the
non-inherited forms requires BPX-type machinery~\cite{BramblePasciakXu91}.

% Declare the assistance used while preparing the manuscript.
\paragraph{Declaration of assistance} The author declares support of two local feline agents (Micro and Conda) running locally, alongside commonly available
language models (Gemini, Claude, ChatGPT) throughout the preparation of this work.

The manuscript is the result of a long iterative process, in which these models contributed to text drafting,
proof-reading, checks of mathematical consistency, and adversarial review of the statements and their proofs. The paper
was audited by the author, who retains full accountability for all scientific content.

\begin{comment}
% The reported anomaly is an implementation defect, not a property of the method.
\paragraph{A remark on the anomaly from \cite{CuiKanschat25CutFEM}} The method analysed above converges in both the stationary and the preconditioned mode. The reference implementation
of~\cite{CuiKanschat25CutFEM} reports an anomaly its authors leave unexplained: used as a stationary iteration the
$\Q_3$ V-cycle diverges when the cut patches are swept once ($n_c=1$), is rescued to a plateau by repeating the cut
sweep ($n_c\ge2$), while the \emph{same} V-cycle preconditions GMRES perfectly well. We were able to reproduce this
behaviour only by emulating a smoother that drops the ghost penalty on the uncut patches adjacent to the cut, from both
the local residual and the local solve, and never revisits them in the $n_c$ repetitions. This leaves a low-rank defect
supported away from the cut, which the extra cut sweeps cannot reach; a stationary iteration then converges at the rate
of the resulting isolated outlier (divergent, then $n_c$-flat), whereas GMRES deflates it at the cost of a few
iterations. The method carries no such defect and exhibits no anomaly. The discrepancy is therefore most likely an
implementation bug rather than a property of the method, and a full account lies outside the scope of the present
paper.
\end{comment}

%% file: splitted/introduction.tex
%!TEX root = ../main.tex

% Unfitted methods: decouple the mesh from the geometry.
Constructing a body-fitted mesh is often the most laborious stage of a finite element simulation, a burden compounded
for moving or evolving geometries by repeated remeshing or mesh deformation. Unfitted finite element methods mitigate
this difficulty: the computational domain $\Omega$ is embedded into a fixed background mesh (in practice a Cartesian
grid) and the boundary $\Gamma=\partial\Omega$ is free to cut through the cells. The geometry enters only through the
intersection of $\Gamma$ with the mesh, so complex and evolving domains are handled on the same grid, and the
technology developed for Cartesian meshes carries over. The cut finite element method
(CutFEM)~\cite{BurmanClausHansboLarsonMassing15} is a realization of this idea.

% Geometric multigrid with vertex-patch smoothing.
Geometric multigrid is a natural route to scalable solvers for CutFEM: the Cartesian background grid provides a
hierarchy down to a single cell without geometry-dependent remeshing. Coarse-grid correction treats smooth error, while
for high-order spaces the established relaxation is the \emph{vertex-patch} smoother~\cite{Pavarino94additive}, which
solves local problems on the $2^d$ cells sharing a vertex. On fitted meshes it is robust in the polynomial degree and
supports efficient parallel, matrix-free realizations~\cite{MeggendorferKanschatKraus24,WitteArndtKanschat21,
    BrubeckFarrell22,WichrowskiMunchKronbichlerKanschat25,CuiGrosseBleyKanschatStrzodka25,CuiKanschat24IP}. The patch
problems themselves need not be factorized: solving them inexactly by a nested $p$-multigrid keeps the smoother
matrix-free and preserves $p$-robustness~\cite{wichrowski2025local}, and the same strategy has been evaluated for the
local saddle-point problems of the Stokes equations~\cite{wichrowski2026StokesPatch}. The recent extension of the
vertex-patch smoother to CutFEM~\cite{CuiKanschat25CutFEM} has demonstrated its effectiveness in computations, but, to our
knowledge, no convergence theory is available for this method. We establish this theory and quantify how ghost-penalty
stabilization governs the convergence of the vertex-patch smoother.

% The price of cutting: small cuts and ghost-penalty stabilization.
The geometric freedom has a price. Since $\Gamma$ need not align with the background mesh, the volume fraction of a
cut cell inside $\Omega$ may become arbitrarily small. Consequently, physical-domain contributions associated with
basis functions supported on such cells may approach zero, degrading discrete coercivity and the conditioning of the
linear system. In particular, a direct application of Nitsche's method with a cut-independent penalty loses uniform
discrete ellipticity as the cut fraction tends to zero: its penalty is scaled with the background-cell diameter $h$,
whereas the interior energy scales with the actual size of the physical intersection. Consequently, the boundary flux
terms cannot be controlled uniformly. The established remedy is the \emph{ghost
    penalty}~\cite{burman2010ghost}: a penalty on jumps of normal derivatives across faces in the boundary zone. By weakly
gluing neighboring elements together~\cite{BadiaNeivaVerdugo22}, it allows poorly cut cells to borrow stability from
neighbors with substantial physical support and restores coercivity and conditioning uniformly in the position of the
boundary relative to the mesh~\cite{BurmanHansbo12,MassingLarsonLogg14,dePrenterVerhooselvanBrummelenLarsonBadia23}.
The solver analysis below therefore incorporates the ghost penalty explicitly.

% Model problem and background-grid hierarchy.
We study geometric multigrid for a CutFEM discretization of the Poisson problem, $-\Delta u=f$ in $\Omega$ with $u=g$
on $\Gamma$, in the CutFEM framework going back to Hansbo and Hansbo~\cite{HansboHansbo02} and in the fictitious
domain form of Burman and Hansbo~\cite{BurmanHansbo12}. The discretization is constructed on a hierarchy
$\mesh_0\sqsubset\dots\sqsubset\mesh_L$ of Cartesian background grids, which can be coarsened down to a single cell. On
each grid $\mesh_\ell$, the active mesh $\mesh_{\ell,\Omega}:=\{\cell\in\mesh_\ell:\cell\cap\Omega\ne\emptyset\}$
consists of the cells that intersect the physical domain. Its union defines the active domain
$\Omega_\ell\supseteq\Omega$, and $V_\ell$ is the $\Q_p$ finite element space on $\mesh_{\ell,\Omega}$.

% Stabilized level operator and Nitsche formulation.
The level operator is induced by the stabilized bilinear form
\begin{gather}\label{eq:intro-form}
    A_\ell(u,v) = a_\ell(u,v) + g_\ell(u,v),
\end{gather}
where $a_\ell$ combines the Poisson term with the Nitsche terms that impose the Dirichlet condition weakly,
\begin{gather}\label{eq:intro-nitsche}
    a_\ell(u,v) = (\nabla u, \nabla v)_\Omega
    - (\partial_n u, v)_\Gamma
    - (u, \partial_n v)_\Gamma
    + \gamma_D\, h_\ell^{-1} (u, v)_\Gamma,
\end{gather}
with a sufficiently large penalty parameter $\gamma_D$.

% High-order ghost-penalty stabilization.
Burman's ghost penalty is given by
\begin{gather}\label{eq:intro-ghost}
    g_\ell(u, v) =
    \gamma \sum_{F\in\mathbb F_G}
    \sum_{k=1}^p \frac{h_F^{2k-1}}{(k!)^2}
    \bigl(\llbracket \partial_n^k u \rrbracket,
    \llbracket \partial_n^k v \rrbracket\bigr)_F,
    \qquad \gamma\in(0,1],
\end{gather}
and acts on the faces $\mathbb F_G$ between cut cells and their active neighbors, that is, on every face of the
active mesh adjacent to at least one cut cell, including faces shared by two cut cells
(Figure~\ref{fig:cutfem-setting}). It penalizes jumps of all normal derivatives up to degree $p$ with a global weight
$\gamma$; the conventional choice is a fixed $O(1)$ constant~\cite{burman2010ghost}. Face-dependent weights have also
been introduced for fluid--structure interaction~\cite{FreiKnokeSteinbachWenskeWick24}, and a general design framework
for the stabilized quantities and the element couplings has been developed
in~\cite{BurmanHansboLarson26Locking}, where the penalty parameter is driven to infinity without locking. We instead
reduce all ghost-penalty terms by a single global factor that may decay polynomially with the finite element degree,
and track the resulting dependence.

% Figure: cut cells, interior cells, ghost faces.
\begin{figure}[!ht]
    \centering
    \begin{tikzpicture}[scale=1.]
        % Draw the Cartesian grid
        \draw[step=1cm, gray, very thin] (-0.2,0) grid (5.2,3.2);

        % Draw the curved boundary
        \node at (-0.35,2.5) {$\Gamma$};
        \draw[line width=2pt, red, dashed] plot [smooth, tension=0.5] coordinates {(-0.2,2.3) (1,2.2) (2,2.5) (3,2.2)
                (4,1.5)
                (5.2,1.5)};

        \node at (2.5,0.7) {$\Omega$};
        \node at (3.2,2.7) {$\mathbb F_G$};

        % Highlight intersected cells
        \foreach \x/\y in {0/2, 1/2, 2/2, 3/1, 3/2, 4/1}
            {
                \fill[blue, opacity=0.3] (\x,\y) rectangle (\x+1,\y+1);
            }
        % Highlight cells inside the domain
        \foreach \x/\y in {0/0, 0/1, 2/1, 1/0, 1/1, 2/0, 3/0, 4/0}
            {
                \fill[green, opacity=0.3] (\x,\y) rectangle (\x+1,\y+1);
            }

        % Highlight the ghost faces
        \foreach \x/\y in {0/2, 1/2, 2/2, 2/1, 3/1, -1/2, 4/1} {
                \draw[blue, line width=1.5pt] (\x+1,\y) -- (\x+1,\y+1);
            }

        \foreach \x/\y in {0/1, 1/1, 2/1, 3/0, 4/0, 3/1} {
                \draw[blue, line width=1.5pt] (\x,\y+1) -- (\x+1,\y+1);
            }

    \end{tikzpicture}
    \caption{The CutFEM setting: the boundary $\Gamma$ (red dashed line) intersects a Cartesian background mesh,
        creating cut cells (blue) and interior cells (green). The ghost penalty acts on the faces $\mathbb F_G$
        (thick blue lines) between cut cells and their active neighbors and provides stability uniformly in the
        position of the boundary relative to the mesh.}
    \label{fig:cutfem-setting}
\end{figure}

% Multilevel and domain-decomposition methods for unfitted problems.
Multigrid methods for unfitted \emph{interface} problems were developed in~\cite{LudescherGrossReusken20}, with a
prolongation tailored to the unfitted space and an interface-correction smoother whose update solves a global system
on the interface unknowns; the thesis~\cite{Ludescher20} extends this to the Stokes problem and to higher order. Gross
and
Reusken~\cite{GrossReusken23} analyze a two-subspace additive Schwarz preconditioner, splitting the CutFEM space into
the background finite element space and the span of the cut basis functions, and obtain optimal preconditioners for
the interface and the fictitious domain problem. For CutFEM, the cell-based additive Schwarz method
of~\cite{PrenterVerhooselBrummelen19} is cut-independent but $h$-dependent because it does not include a coarse space;
see also the survey~\cite{dePrenterVerhooselvanBrummelenLarsonBadia23}.

% Stabilization results and the remaining solver question.
The ghost-penalty analyses of~\cite{burman2010ghost,BurmanHansbo12,MassingLarsonLogg14} establish the discrete
extension property used below. Face-dependent weights were introduced in~\cite{FreiKnokeSteinbachWenskeWick24} to
improve conditioning, while~\cite{wichrowski2026TensorGhostPenalty} develops a tensor-product evaluation of high-order
ghost penalties. A parameter-free alternative modifies the space instead of the form: the extended finite element
spaces of~\cite{BurmanHansboLarson22Ext} build the extension into the discretization, motivated precisely by the
observation that designing and evaluating ghost penalties becomes costly at high order. We retain the penalty
formulation, since it leaves the finite element space on each background grid unchanged and therefore preserves the
multigrid hierarchy, and we quantify the degree dependence it carries. These results do not provide a convergence theory for the multigrid vertex-patch smoother on CutFEM,
whose reported iteration counts lose $p$-robustness on cut domains~\cite{CuiKanschat25CutFEM}.

% Non-inherited forms and the parallel implementation challenge.
The level forms are \emph{non-inherited}: the ghost penalty acts on a different face set on each level, and the energy
is defined on $\Omega$, whereas the finite element space extends over $\Omega_\ell$. Parallel patch processing
introduces a second issue. A coloring partitions the vertex patches into groups processed concurrently. On a fitted
Cartesian mesh, the standard $2^d$ coloring separates same-color vertices by a stride of two cells and makes their
patch spaces orthogonal in the energy form. With ghost-penalty stabilization, however, same-color patches can remain
coupled across the boundary strip. For two such patches $i\ne j$, the volume and Nitsche integrals vanish, but a pure
ghost-penalty coupling can remain,
\begin{gather}\label{eq:intro-coupling}
    A_\ell(u,v) = g^{\,ij}_\ell(u,v),
    \qquad u\in V_{\ell,i},\; v\in V_{\ell,j}.
\end{gather}
Thus, the standard fitted-mesh coloring does not give exact simultaneous subspace corrections. The additional
boundary-strip coupling is also associated with the observed dependence on the polynomial degree and enters the
convergence analysis.

% This paper: what it does, what is new.
This paper provides a convergence theory, localizes the observed degree dependence to the boundary strip, and treats
the ghost penalty as a design variable. The cut-uniform convergence theory with a multigrid coarse space, its parallel
colored realizations, the $p$-explicit reduction to the strip contraction $\sigma_\ell(p)$, and the two-cell model of
the competing ghost-weight scales appear to be new. The main contributions are:
\begin{itemize}
    \item \emph{Cut-robust convergence.} Although the forms are non-inherited, natural injection prolongation is stable in the stabilized energy
          norm (Lemma~\ref{lem:prolongation}). Using the discrete extension property, the vertex-patch decomposition
          admits a stable splitting independent of how $\Gamma$ cuts the mesh (Theorem~\ref{thm:additive}). The two-level
          contraction is uniform in the mesh size and the cut (Theorem~\ref{thm:twolevel}); under the additional
          smallness condition $q\le\tfrac14$ on the two-level rate, the same holds for the W-cycle
          (Corollary~\ref{cor:wcycle}).
    \item \emph{A parallel semi-multiplicative smoother.} The ghost penalty couples same-color patches on the strip~\eqref{eq:intro-coupling}, so standard $2^d$ coloring loses exact multiplicativity.
          Retaining the cheap $2^d$ coloring while accounting for this coupling defines a
          \emph{semi-multiplicative} smoother (additive only through the thin-strip coupling). For a cut-independent
          damping of the strip updates, we prove a convergence bound that differs from the exact multiplicative estimate
          only by a cut-independent factor (Corollary~\ref{cor:hybrid}). The numerical experiments use the undamped
          variant and are not instances of this corollary. Exact orthogonality is restored by a stride-$3$ coloring on the strip
          (Corollary~\ref{cor:colored}).
    \item \emph{The degree dependence.} The convergence constants degrade with $p$. We isolate this dependence in the contraction $\sigma_\ell(p)$ of the
          exact cut sweep on the boundary strip, showing $\sigma_\ell\le\sigma(p)<1$ uniform in the mesh size and the
          cut (Proposition~\ref{prop:sigma}). A derived balance condition gives the number of cut-sweep repetitions
          $n_c^\star$ required for the modeled strip factor not to exceed a reference interior/coarse
          contraction~\eqref{eq:nc-balance}; we compare this diagnostic with measured sweep counts. Within the
          derivative-jump family, no choice of weights makes the present assembled bounds polynomial in $p$
          (Remark~\ref{rem:duality}).
\end{itemize}

% Scope and remaining analytical questions.
Two questions bound the present theory. Although $\sigma_\ell\le\sigma(p)<1$ is uniform in the level and the cut, its
sharp dependence on $p$ remains open; resolving it requires accounting for how coupling accumulates across the boundary
strip. In addition, the analysis covers the two-level method and, under the smallness condition $q\le\tfrac14$, the
W-cycle. The numerical study intentionally uses the stronger V-cycle, whereas a V-cycle bound for the non-inherited
forms requires BPX-type machinery~\cite{BramblePasciakXu91}.

% Organization.
The remainder of the paper is organized as follows. Section~\ref{sec:theory} fixes the geometric setting and the CutFEM
stability properties the analysis assumes. Section~\ref{sec:transfer} establishes the stability of the intergrid
transfer. Section~\ref{sec:smoother} defines the smoother variants and verifies the vertex-patch covering.
Section~\ref{sec:schwarz} develops the cut-robust two-level Schwarz theory. Section~\ref{sec:p-dependence} treats the
degree dependence, with the sweep-count balance, the weight-profile duality, and the limits of ghost-penalty shedding
(Section~\ref{sec:cut-adaptive}).

%% file: splitted/appendix.tex
\section{Proof of the strip contraction (Proposition~\ref{prop:sigma})}
\label{app:sigma-proof}

% Recall the objects of the proposition and outline the argument.
We use the notation of Section~\ref{sec:p-dependence}: the strip patches $\mathbb J_c$ of~\eqref{eq:strip-patches},
their combined span $V_c^\Sigma$, and the sweep error operator $E_{\mathrm{cut}}$ with norm~\eqref{eq:sigma-def}. The
Friedrichs inequality on the strip band yields a stable decomposition of $V_c^\Sigma$ into the strip patches. The
multiplicative Schwarz estimate then gives the contraction bound.

\begin{proof}[Proof of Proposition~\ref{prop:sigma}]
    % The strip band lies in a thin tube around the interface.
    Write $h:=h_\ell$ and $\Omega_c:=\bigcup_{j\in\mathbb J_c}\omega_j$ for the strip band. By~\eqref{eq:strip-patches}
    every patch of $\mathbb J_c$ contains a cut cell, and a patch has diameter at most $2\sqrt d\,h$, so every point of
    $\Omega_c$ lies within distance $C_0h$ of $\Gamma$ with $C_0:=3\sqrt d$. By \eqref{eq:classical-patch-space}
    and~\eqref{eq:full-residual-patch-space}, every basis function generating $V_{\ell,j}$ has active support contained in
    $\omega_j$. Hence every $v\in V_c^\Sigma$ vanishes on $\Omega_\ell\setminus\Omega_c$, in particular at every point of
    $\Omega_\ell$ at distance greater than $C_0h$ from $\Gamma$. We prove the full-band Friedrichs inequality
    \begin{gather}\label{eq:band-friedrichs}
        \|v\|^2_{0,\Omega_\ell} \;\le\; C_F(p,\gamma)\, h^2\,
        \bigl( |v|^2_{1,\Omega} + g_\ell(v,v) \bigr),
        \qquad v\in V_c^\Sigma,
    \end{gather}
    with $C_F(p,\gamma)$ independent of $\ell$ and of the cut, splitting the left-hand side into the physical part
    $\|v\|^2_{0,\Omega}$ and the fictitious part $\|v\|^2_{0,\Omega_\ell\setminus\Omega}$.

    % Physical part: one-dimensional Friedrichs along the normal fibers of the tube.
    \emph{Physical part.} Set $C_1:=4\sqrt d$, so that $C_1h\le\delta_0/2$ by the resolution
    condition~\eqref{eq:resolution}. By the positive reach of Assumption~\ref{ass:geometry}, the normal map
    $\Phi(y,s):=y+s\,n(y)$ is a $C^1$ diffeomorphism of $\Gamma\times(-C_1h,0)$ onto the interior collar
    $\{x\in\Omega:\operatorname{dist}(x,\Gamma)<C_1h\}$, and since $C_1h\le\delta_0/2$ bounds the principal curvatures
    against the fiber length, its Jacobian is bounded above and below by constants depending only on $d$. The collar
    contains $\Omega\cap\Omega_c$, because $C_0<C_1$. Since $v\in H^1(\Omega)$, its restriction to almost every normal
    fiber is absolutely continuous. The inner endpoint $\Phi(y,-C_1h)$ lies at distance $C_1h>C_0h$ from $\Gamma$,
    where $v=0$. The fundamental theorem of calculus along almost every fiber and Cauchy--Schwarz give, for
    $-C_1h<t<0$,
    \begin{gather*}
        |v(\Phi(y,t))|^2
        \;\le\; C_1h \int_{-C_1h}^{0} |\nabla v(\Phi(y,s))|^2 \,\mathrm ds ,
    \end{gather*}
    and integrating over $t$ and over $y\in\Gamma$, with the Jacobian bounds on both sides, yields
    \begin{gather*}
        \|v\|^2_{0,\Omega} \;=\; \|v\|^2_{0,\Omega\cap\Omega_c}
        \;\le\; C_b\, h^2\, |v|^2_{1,\Omega},
    \end{gather*}
    with $C_b$ independent of $\ell$, $p$ and the cut.

    % Fictitious part: transfer along the ghost chains, paid in ghost energy.
    \emph{Fictitious part.} $\Omega_\ell\setminus\Omega$ is covered by the cut cells. For a cut cell $\cell$, apply the
    one-face transfer~\eqref{eq:transfer} with $q=p$ and $w=v$ (whose face jumps of order $k=0$ vanish by continuity)
    successively along the chain of Lemma~\ref{lem:geom}(i), truncated at the first uncut cell, from the uncut cell
    $I(\cell)\subset\Omega$ back to $\cell$. The truncated chain has at most $C_d$ cells, and each of its faces is
    adjacent to a cut cell and thus belongs to $\mathbb F_G$. Therefore, its jump terms are controlled by $g_\ell$ at
    the price $\gamma^{-1}$. Consequently,
    \begin{gather*}
        \|v\|^2_{0,\cell}
        \;\le\; C\,\Xi_p \bigl( \|v\|^2_{0,I(\cell)}
        + (p+1)\,\gamma^{-1} h^2\, g_\ell^{\mathrm{chain}(\cell)}(v,v) \bigr),
    \end{gather*}
    with $\Xi_p$ the chain amplification~\eqref{eq:Xi} and $g_\ell^{\mathrm{chain}(\cell)}$ the contribution to $g_\ell$
    of the chain faces. Each cell serves in at most $C_d$ chains (Lemma~\ref{lem:geom}(i)), so summing over the cut
    cells gives
    \begin{gather*}
        \|v\|^2_{0,\Omega_\ell\setminus\Omega}
        \;\le\; C\,\Xi_p \bigl( \|v\|^2_{0,\Omega}
        + (p+1)\,\gamma^{-1} h^2\, g_\ell(v,v) \bigr),
    \end{gather*}
    and combining with the physical part proves~\eqref{eq:band-friedrichs} with
    $C_F(p,\gamma)\le C\,\Xi_p\,\bigl(C_b+(p+1)\,\gamma^{-1}\bigr)$.

    % Stable decomposition of the strip span, without a coarse space.
    \emph{Stable decomposition.} Each local space $V_{\ell,j}$ is the span of the nodal basis functions it
    contains, so $V_c^\Sigma$ is the span of their union over $\mathbb J_c$: every nonzero nodal coefficient of $v\in
        V_c^\Sigma$ belongs to a degree of freedom updated by at least one strip patch. Fixing one such patch per degree of
    freedom and splitting nodally as in Step~2 of the proof of Theorem~\ref{thm:additive}, with $w:=v$ and no coarse
    component, gives $v=\sum_{j\in\mathbb J_c}v_j$ with $v_j\in V_{\ell,j}$ and, by~\eqref{eq:nodal-stable} and Step~3 of
    the same proof,
    \begin{gather*}
        \begin{aligned}
            \sum_{j\in\mathbb J_c} \energy{v_j}^2_\ell
             & \le C(p,\gamma,\gamma_D)\,
            \bigl( |v|^2_{1,\Omega_\ell}
            + h^{-2}\|v\|^2_{0,\Omega_\ell}\bigr)
            \\
             & \le C(p,\gamma,\gamma_D)\,
            \bigl(1+C_F(p,\gamma)\bigr)\,\energy{v}^2_\ell
            =: C_{\mathrm{str}}(p,\gamma,\gamma_D)\,\energy{v}^2_\ell ,
        \end{aligned}
    \end{gather*}
    the middle step by the discrete extension~\eqref{eq:ghost-extension}, the full-band
    inequality~\eqref{eq:band-friedrichs}, and the norm equivalence~\eqref{eq:coercive}. This is a stable decomposition
    of $V_c^\Sigma$ into the strip patches, with constant uniform in $\ell$ and in the cut.

    % Multiplicative Schwarz on the strip span.
    The interaction bound of Lemma~\ref{lem:interaction} restricts to $\mathbb J_c$, so the standard multiplicative Schwarz
    estimate \cite[Ch.~2]{ToselliWidlund05}, \cite{Xu92}, applied on the space $V_c^\Sigma$ with exact subspace solves,
    yields, for every fixed ordering,
    \begin{gather*}
        \sigma_\ell^2
        \;\le\;
        1-\frac{1}{\bigl(2N_O(d)^2+1\bigr)
            C_{\mathrm{str}}(p,\gamma,\gamma_D)}
        \;=:\;\sigma(p,\gamma,\gamma_D)^2\;<\;1.
    \end{gather*}
\end{proof}

\section{The two-cell threshold in the vanishing-cut limit}
\label{app:spd-floor}

% Fix the two-cell model and normalize the mesh size out.
We prove the limit statement behind~\eqref{eq:spd-floor-exact}. The configuration is that of
Proposition~\ref{prop:scales}: a cell $\cell$ of edge $h$ cut by a plane parallel to a face, with physical part a slab
of relative thickness $\kappa\in(0,1)$ measured from the ghost face $F$, an uncut active neighbour $\cell'$ across $F$,
and $\Gamma\cap\cell$ parallel to $F$. The two-cell space is the restriction of the conforming space $V_\ell$ to
$\cell\cup\cell'$, that is, the subspace of $\Q_p(\cell)\oplus\Q_p(\cell')$ of functions continuous across $F$. On it
the two-cell form $A_\ell(\gamma)$ consists of the physical energy on $\cell'\cup(\cell\cap\Omega)$, the symmetric
Nitsche terms on $\Gamma\cap\cell$ with parameter $\gamma_D$, and the ghost penalty~\eqref{eq:intro-ghost} on the
single face $F$. Every term scales as $h^{d-2}$ under the dilation $x\mapsto x/h$, so we set $h=1$.

% Reduce to one dimension along the normal coordinate.
\emph{One-dimensional reduction.} Let $x$ be the coordinate normal to $F$, vanishing on $F$ and positive in $\cell$,
so that $\cell'$ corresponds to $x\in(-1,0)$, the physical slab to $x\in(0,\kappa)$, and $\Gamma\cap\cell$ to
$x=\kappa$. By the tensor-product structure~\eqref{eq:Qp}, the restriction of a two-cell function $v$ to a
normal line with frozen tangential variables is a pair $(u,q)\in\mathbb P_p\times\mathbb P_p$ of univariate
polynomials on $(-1,0)$ and $(0,1)$, with $u(0)=q(0)$ by continuity across $F$. The normal-derivative jumps on
$F$, the traces on $\Gamma\cap\cell$, and the normal part of the physical energy act on each line separately,
while the tangential derivatives contribute nonnegative energy only. Hence $A_\ell(\gamma)(v,v)$ is bounded below
by the tangential integral of the one-dimensional form
\begin{gather}\label{eq:two-cell-1d}
    a_{\kappa,\gamma}(u,q) := \int_{-1}^{0}(u')^2
    + \int_{0}^{\kappa}(q')^2
    - 2\,q'(\kappa)\,q(\kappa) + \gamma_D\, q(\kappa)^2
    + \gamma \sum_{k=1}^{p} \frac{\bigl(q^{(k)}(0)-u^{(k)}(0)\bigr)^2}{(k!)^2},
\end{gather}
with equality for tangentially constant $v$. Since $a_{\kappa,\gamma}$ contains $u$ only through $u'$, it is
invariant under adding a constant to $u$, so every pair in $\mathbb P_p\times\mathbb P_p$ can be shifted to
satisfy $u(0)=q(0)$ without changing the value of the form. Semidefiniteness of $A_\ell(\gamma)$ is therefore
equivalent to semidefiniteness of $a_{\kappa,\gamma}$ on all of $\mathbb P_p\times\mathbb P_p$, and it suffices
to study $a_{\kappa,\gamma}$. Its vanishing-cut limit, obtained at fixed $(u,q)$ as $\kappa\to0$, is
\begin{gather}\label{eq:two-cell-limit}
    a_{0,\gamma}(u,q) := \int_{-1}^{0}(u')^2
    - 2\,q'(0)\,q(0) + \gamma_D\, q(0)^2
    + \gamma \sum_{k=1}^{p} \frac{\bigl(q^{(k)}(0)-u^{(k)}(0)\bigr)^2}{(k!)^2}.
\end{gather}

% Define the coercivity threshold in the ghost weight.
The ghost term is positive semidefinite, so $a_{\kappa,\gamma}\succeq0$ implies $a_{\kappa,\gamma'}\succeq0$ for every
$\gamma'\ge\gamma$. Accordingly we define the threshold
\begin{gather*}
    \gamma_*(\kappa) := \inf\{\gamma>0:\ a_{\kappa,\gamma}\succeq0\} \in [0,\infty],
    \qquad \inf\varnothing := \infty .
\end{gather*}
By monotonicity the set on the right is an interval; it is closed relative to $(0,\infty)$, since
$a_{\kappa,\gamma}$ depends continuously on $\gamma$ and semidefiniteness passes to limits on the finite-dimensional
space $\mathbb P_p\times\mathbb P_p$. Hence the infimum is attained whenever it is finite and positive.

\begin{proposition}[vanishing-cut threshold of the two-cell model]\label{prop:spd-limit}
    Let $p\ge1$ and $\gamma_D>0$.
    \begin{itemize}
        \item[(i)] If $\gamma_D>p^2$, then $a_{0,\gamma}\succeq0$ on $\mathbb P_p\times\mathbb P_p$ if and only if
              \begin{gather*}
                  \gamma \;\ge\; \frac{1}{\gamma_D-p^2}\,.
              \end{gather*}
              If $\gamma_D\le p^2$, then $a_{0,\gamma}$ is indefinite for every $\gamma>0$.
        \item[(ii)] For every $\gamma>0$ with $\gamma\,(\gamma_D-p^2)<1$ there is $\kappa_0>0$ such that
              $a_{\kappa,\gamma}$ is indefinite for all $\kappa\in(0,\kappa_0)$. In particular
              \begin{gather*}
                  \sup_{\kappa\in(0,1)}\,\gamma_*(\kappa)
                  \;\ge\; \frac{1}{\gamma_D-p^2}
                  \quad\text{if }\gamma_D>p^2,
                  \qquad
                  \sup_{\kappa\in(0,1)}\,\gamma_*(\kappa) \;=\; \infty
                  \quad\text{if }\gamma_D\le p^2.
              \end{gather*}
    \end{itemize}
\end{proposition}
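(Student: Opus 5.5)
The plan is to compute the infimum of $a_{0,\gamma}$ exactly by eliminating variables one at a time. Each elimination is a free minimization of a quadratic. At the end only a $2\times2$ quadratic form in two scalars remains, and its semidefiniteness can be read off directly.

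\emph{Step 1 (decoupling the higher jumps).} In~\eqref{eq:two-cell-limit}, $q$ enters only through the Taylor data $q(0),q'(0),\dots,q^{(p)}(0)$, and these are free coordinates on $\mathbb P_p$. For $k\ge2$ the derivative $q^{(k)}(0)$ appears only in the nonnegative ghost term of order $k$. Setting $q^{(k)}(0):=u^{(k)}(0)$ for $k\ge2$ therefore removes those terms without affecting anything else. Write $a:=q(0)$, $b:=q'(0)$ and $s:=u'(0)$. Semidefiniteness of $a_{0,\gamma}$ is then equivalent to nonnegativity of
\begin{gather*}
  \int_{-1}^0 (u')^2 - 2ab + \gamma_D a^2 + \gamma (b-s)^2 .
\end{gather*}
Minimizing over the free scalar $b$ gives $b=s+a/\gamma$. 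The reduced form is $\int_{-1}^0 (u')^2 - 2as + (\gamma_D-\gamma^{-1})a^2$.

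\emph{Step 2 (the Christoffel endpoint constant).} For a prescribed endpoint derivative $s=u'(0)$, minimize $\int_{-1}^0(u')^2$ over $u'=r\in\mathbb P_{p-1}$. The minimum is $s^2/K_{p-1}$, where $K_{p-1}:=\sup_{r\in\mathbb P_{p-1}} r(0)^2/\|r\|^2_{L^2(-1,0)}$ is the endpoint value of the reproducing kernel. This is the Christoffel bound~\eqref{eq:christoffel} for degree $p-1$ on an interval of length one. It equals $\sum_{j=0}^{p-1}(2j+1)=p^2$ through the orthonormal shifted Legendre basis, and it is attained by the kernel function itself. I expect this step to be the main point requiring care: the sharp threshold rests entirely on both the exact value $p^2$ and its attainment. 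I would verify it directly from $P_j(1)=1$ and $\|P_j\|^2_{L^2(-1,1)}=2/(2j+1)$, rescaled to $(-1,0)$.

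\emph{Step 3 (the $2\times2$ form and part (i)).} After Step 2, $a_{0,\gamma}\succeq0$ holds if and only if the form $s^2/p^2-2as+(\gamma_D-\gamma^{-1})a^2$ is nonnegative for all $(s,a)\in\mathbb R^2$. That holds if and only if its determinant is nonnegative, i.e.\ $(\gamma_D-\gamma^{-1})/p^2\ge1$, equivalently $\gamma_D-p^2\ge\gamma^{-1}$. If $\gamma_D>p^2$, this is exactly $\gamma\ge 1/(\gamma_D-p^2)$. If $\gamma_D\le p^2$, the condition fails for every $\gamma>0$, and the form takes negative values. It also takes positive values (for example $u'\ne0$, $q=0$), so it is indefinite. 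The negative value is realized by an explicit pair: the kernel $r$ with $u'=r$, the optimal $b$, an $a$ along a negative direction of the $2\times2$ form, and the higher Taylor data matched as in Step 1.

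\emph{Step 4 (part (ii)).} Fix $\gamma$ with $\gamma(\gamma_D-p^2)<1$, so that Step 3 provides a pair $(u,q)$ with $a_{0,\gamma}(u,q)<0$. For this fixed pair, $\int_0^\kappa(q')^2\to0$, $q'(\kappa)q(\kappa)\to q'(0)q(0)$ and $q(\kappa)\to q(0)$ as $\kappa\to0$. Hence $a_{\kappa,\gamma}(u,q)\to a_{0,\gamma}(u,q)<0$, and $a_{\kappa,\gamma}(u,q)<0$ for all $\kappa<\kappa_0$. Positive values persist as before, so $a_{\kappa,\gamma}$ is indefinite there. This gives $\gamma_*(\kappa)\ge\gamma$ for small $\kappa$. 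Taking the supremum over admissible $\gamma$ yields both bounds on $\sup_\kappa\gamma_*(\kappa)$; in the case $\gamma_D\le p^2$ every $\gamma>0$ is admissible, so the supremum is infinite.
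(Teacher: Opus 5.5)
Your proof is correct and follows the paper's argument: you use the same reduction to the Taylor data at $F$, the same endpoint Christoffel constant $p^2$ of $\mathbb P_{p-1}$ with its attaining kernel, a $2\times2$ determinant test, and the same continuity-in-$\kappa$ argument for (ii). The only real difference is the order of elimination: you minimize over $q'(0)$ first and read off $\gamma_D-\gamma^{-1}\ge p^2$, whereas the paper minimizes over $u'(0)$ and obtains $\gamma_D\,\gamma/(1+\gamma p^2)\ge 1$. Because each of your steps is an exact infimum, you get sufficiency and necessity at once, and you also verify the positive values needed for indefiniteness, which the paper leaves implicit.
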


\begin{proof}
    % Endpoint Christoffel bound for the neighbouring-cell contribution.
    \emph{Step 1 (endpoint bound).} For $s\in\mathbb P_{p-1}$ on $(-1,0)$,
    \begin{gather}\label{eq:christoffel}
        s(0)^2 \;\le\; p^2 \int_{-1}^{0} s^2 ,
    \end{gather}
    with equality for a unique direction. Indeed, with the orthonormal basis
    $\varphi_j(x)=\sqrt{2j+1}\,L_j(2x+1)$, $j=0,\dots,p-1$, of shifted Legendre polynomials of $L^2(-1,0)$, the
    reproducing kernel $K$ of $\mathbb P_{p-1}$ satisfies
    $K(0,0)=\sum_{j=0}^{p-1}\varphi_j(0)^2=\sum_{j=0}^{p-1}(2j+1)=p^2$, since $L_j(1)=1$;
    inequality~\eqref{eq:christoffel} is the reproducing property $s(0)=(s,K(\cdot,0))$ with Cauchy--Schwarz, and
    equality holds exactly for $s$ proportional to $K(\cdot,0)$.

    % Eliminate all variables of the limit form except three scalars.
    \emph{Step 2 (sufficiency in (i)).} Let $\gamma(\gamma_D-p^2)\ge1$, so in particular $\gamma_D>p^2$. Fix $(u,q)$
    and set $a:=q(0)$, $b:=q'(0)$, $c:=u'(0)$. Dropping the nonnegative ghost terms of orders $k\ge2$
    from~\eqref{eq:two-cell-limit} and applying~\eqref{eq:christoffel} to $s=u'\in\mathbb P_{p-1}$,
    \begin{gather*}
        a_{0,\gamma}(u,q) \;\ge\; \frac{c^2}{p^2} + \gamma\,(b-c)^2 - 2ab + \gamma_D\, a^2 .
    \end{gather*}
    Minimizing the right-hand side over $c$ at fixed $b$ gives $c=\gamma p^2 b/(1+\gamma p^2)$ and the value
    \begin{gather}\label{eq:two-by-two}
        a_{0,\gamma}(u,q) \;\ge\; \gamma_{\mathrm{eff}}\, b^2 - 2ab + \gamma_D\, a^2,
        \qquad
        \gamma_{\mathrm{eff}} := \frac{\gamma}{1+\gamma p^2}\,.
    \end{gather}
    The quadratic form on the right of~\eqref{eq:two-by-two} is positive semidefinite in $(a,b)$ if and only if
    $\gamma_{\mathrm{eff}}\,\gamma_D\ge1$, which is equivalent to $\gamma(\gamma_D-p^2)\ge1$. Hence $a_{0,\gamma}\succeq0$.

    % A witness turning every inequality of Step 2 into an equality.
    \emph{Step 3 (necessity in (i)).} Let $\gamma(\gamma_D-p^2)<1$, so that $\gamma_{\mathrm{eff}}\gamma_D<1$ and the
    form in~\eqref{eq:two-by-two} is indefinite; pick $(a,b)$ with $\gamma_{\mathrm{eff}}b^2-2ab+\gamma_D a^2<0$. Set
    $c:=\gamma p^2 b/(1+\gamma p^2)$, let $u\in\mathbb P_p$ be an antiderivative of $s:=c\,K(\cdot,0)/p^2$, the
    equality direction of~\eqref{eq:christoffel} with $s(0)=c$, and let $q\in\mathbb P_p$ be the polynomial with
    $q(0)=a$, $q'(0)=b$, and $q^{(k)}(0)=u^{(k)}(0)$ for $2\le k\le p$. Then the ghost terms of orders $k\ge2$ vanish
    and every inequality of Step 2 holds with equality, so
    $a_{0,\gamma}(u,q)=\gamma_{\mathrm{eff}}b^2-2ab+\gamma_D a^2<0$. When $\gamma_D\le p^2$, the condition
    $\gamma(\gamma_D-p^2)<1$ holds for every $\gamma>0$, and the same witness proves the final claim of (i).

    % Transfer the negativity to small positive cut fractions.
    \emph{Step 4 (proof of (ii)).} Let $\gamma(\gamma_D-p^2)<1$ and fix the witness $(u,q)$ of Step 3, for which
    $a_{0,\gamma}(u,q)<0$. Then
    \begin{gather*}
        a_{\kappa,\gamma}(u,q) - a_{0,\gamma}(u,q)
        = \int_0^\kappa (q')^2
        - 2\bigl(q'(\kappa)q(\kappa)-q'(0)q(0)\bigr)
        + \gamma_D\bigl(q(\kappa)^2-q(0)^2\bigr)
        \;\longrightarrow\; 0
        \qquad(\kappa\to0),
    \end{gather*}
    since $q$ is a fixed polynomial. Hence $a_{\kappa,\gamma}(u,q)<0$, and therefore $\gamma_*(\kappa)>\gamma$
    because the set $\{\gamma':a_{\kappa,\gamma'}\succeq0\}$ is a closed interval not containing $\gamma$, for all
    sufficiently small $\kappa$. If $\gamma_D>p^2$, taking the supremum over $\kappa$ and then over
    $\gamma<1/(\gamma_D-p^2)$ proves the first bound of (ii); if $\gamma_D\le p^2$, every $\gamma>0$ satisfies
    $\gamma(\gamma_D-p^2)<1$, so $\sup_{\kappa\in(0,1)}\gamma_*(\kappa)=\infty$.
\end{proof}

% Interpret the mechanism and state what is left unproved.
Elimination in~\eqref{eq:two-by-two} shows that the uncut neighbour controls part of the first-derivative jump with the
sharp endpoint constant $p^{-2}$ from~\eqref{eq:christoffel}. Consequently, the effective weight in the Nitsche block
is $\gamma/(1+\gamma p^2)$ rather than $\gamma$, and the reduction $\gamma_D\mapsto\gamma_D-p^2$ equals the endpoint
Christoffel constant of $\mathbb P_{p-1}$. The proposition identifies the threshold of the vanishing-cut limit and
shows that it is a lower bound for the cut-uniform threshold $\sup_{\kappa\in(0,1)}\gamma_*(\kappa)$ of the model.
Whether the cut-uniform threshold equals the limit, that is, whether no cut fraction demands a weight beyond
$1/(\gamma_D-p^2)$, remains unproved: the exact generalized-eigenvalue computations reported with
Table~\ref{tab:spd-floor} support it at the sampled cut fractions, and we leave an analytic proof of the matching upper
bound open.